\def\l@subsection{\@tocline{2}{0pt}{2.5pc}{5pc}{}}
\numberwithin{equation}{section}
\theoremstyle{plain}
\newtheorem{thm}{Theorem}[section]
\newtheorem{lemma}[thm]{Lemma}
\newtheorem{prop}[thm]{Proposition}
\newtheorem{cor}[thm]{Corollary}
\newtheorem{example}[thm]{Example}
\crefname{lemma}{Lemma}{Lemmas}
\crefname{thm}{Theorem}{Theorems}
\theoremstyle{definition}
\newtheorem{defn}[thm]{Definition}
\newtheorem{conj}[thm]{Conjecture}
\crefname{conj}{Conjecture}{Conjectures}
\theoremstyle{remark}
\newtheorem*{remark}{Remark}
\DeclareMathOperator{\In}{In}
\DeclareMathOperator{\Out}{Out}
\DeclareMathOperator{\wt}{wt}
\DeclareMathOperator{\quinv}{quinv}
\DeclareMathOperator{\coinv}{coinv}
\DeclareMathOperator{\South}{South}
\DeclareMathOperator{\North}{North}
\DeclareMathOperator{\arm}{LArm}
\DeclareMathOperator{\uarm}{UArm}
\DeclareMathOperator{\up}{up}
\DeclareMathOperator{\down}{down}
\DeclareMathOperator{\dg}{dg}
\DeclareMathOperator{\Tab}{Tab}
\DeclareMathOperator{\Ext}{Ext}
\DeclareMathOperator{\TAZRP}{\mathcal{T}}
\DeclareMathOperator{\rate}{rate}
\DeclareMathOperator{\inv}{inv}
\DeclareMathOperator{\cont}{cont}
\DeclareMathOperator{\proj}{\mathcal{P}}
\DeclareMathOperator{\lambdac}{\lambda^{\text{c}}}
\newcommand{\bI}{\bm{I}}
\newcommand{\qbinom}[2]{\bgroup\renewcommand*{\arraystretch}{1}\begin{bmatrix} #1 \\ #2\end{bmatrix} \egroup}
\DeclareMathOperator{\maj}{maj}
\DeclareMathOperator{\sh}{\mathcal{L}}
\DeclareMathOperator{\A}{a}
\DeclareMathOperator{\LLT}{LLT}
\DeclareMathOperator{\sle}{\sigma^{\leq}}
\DeclareMathOperator{\sg}{\sigma^{>}}
\newcommand{\NN}{\mathbb{N}}
\newcommand{\cQ}{\mathcal{Q}}
\newcommand{\cM}{\mathcal{M}}
\newcommand{\tS}{\tilde{S}}
\newcommand{\tx}{\tilde{x}}
\newcommand{\PP}{\mathbb{P}}
\newlength\cellsize \setlength\cellsize{12\unitlength}
\newcommand\cellify[1]{\def\thearg{#1}\def\nothing{}%
\ifx\thearg\nothing
\vrule width0pt height\cellsize depth0pt\else
\hbox to 0pt{\usebox2\hss}\fi%
\vbox to 12\unitlength{
\vss
\hbox to 12\unitlength{\hss$#1$\hss}
\vss}}
\newcommand\tableau[1]{\vtop{\let\\=\cr
\setlength\baselineskip{-16000pt}
\setlength\lineskiplimit{16000pt}
\setlength\lineskip{0pt}
\halign{&\cellify{##}\cr#1\crcr}}}
\newcommand\expath[1]{%
\hbox to 0pt{\usebox3\hss}%
\vbox to 12\unitlength{
\vss
\hbox to 12\unitlength{\hss$#1$\hss}
\vss}}
\newcommand\cell[3]{
\def\i{#1} \def\j{#2} \def\entry{#3}

\draw (\j-1,-\i)--(\j,-\i)--(\j,-\i+1)--(\j-1,-\i+1)--(\j-1,-\i);
\node at (\j-.5,-\i+.5) {\entry};
}
\newcommand\greysq{
\begin{tikzpicture}[scale=0.5]
\def\h{0.85};
\filldraw[white,fill=gray!25] (-1*\h,0)--(0,0)--(0,1*\h)--(-1*\h,1*\h)--(-1*\h,0);
\end{tikzpicture}
}
\newcommand\darkgreysq{
\begin{tikzpicture}[scale=0.5]
\def\h{0.85};
\filldraw[white,fill=gray!60] (-1*\h,0)--(0,0)--(0,1*\h)--(-1*\h,1*\h)--(-1*\h,0);
\end{tikzpicture}
}
\newcommand{\qtrip}[3]{
\begin{tikzpicture}[scale=0.5]
\cell{1}{0}{#1} \cell{2}{0}{#2} \cell{2}{2.7}{#3}
\node at (1,-1.5) {$\cdots$};
\end{tikzpicture}
}
\newcommand\cellL[4]{
\def\i{#1} \def\j{#2} \def\entry{#3} \def\labl{#4}
\draw (\j-1,-\i)--(\j,-\i)--(\j,-\i+1)--(\j-1,-\i+1)--(\j-1,-\i);
\node at (\j-.5,-\i+.5) {\entry};
\node at (\j-1.3,-\i+.8) {\tiny\labl};
}
\begin{document}

\title{Modified {M}acdonald polynomials and the multispecies zero range process: II}

\author[A.~Ayyer]{Arvind Ayyer}
\address{A.~Ayyer, Department of Mathematics, Indian Institute of Science, Bangalore 560 012, India}
\email{arvind@iisc.ac.in}

\author[O.~Mandelshtam]{Olya Mandelshtam}\address{O.~Mandelshtam, Department of Combinatorics and Optimization, University of Waterloo, Waterloo, ON, Canada}
\email{omandels@uwaterloo.ca}

\author[J.~B.~Martin]{James B.\ Martin}
\address{J.~B.~Martin, Department of Statistics, University of Oxford, UK}
\email{martin@stats.ox.ac.uk}

\date{\today}

\begin{abstract}
In a previous part of this work, we gave a new tableau formula for the modified Macdonald polynomials $\widetilde{H}_{\lambda}(X;q,t)$, using a weight on tableaux involving the \emph{queue inversion} (quinv) statistic. In this paper we explicitly describe a connection between these combinatorial objects and a class of multispecies totally asymmetric zero range processes (mTAZRP) on a ring, with site-dependent jump-rates. We construct a Markov chain on the space of tableaux of a given shape, which projects to the mTAZRP, and whose stationary distribution can be expressed in terms of quinv-weighted tableaux. We deduce that the mTAZRP has a partition function given by the modified Macdonald polynomial $\widetilde{H}_{\lambda}(X;1,t)$. The novelty here in comparison to previous works relating the stationary distribution of integrable systems to symmetric functions is that the variables $x_1,\ldots,x_n$ are explicitly present as hopping rates in the mTAZRP. We also obtain interesting symmetry properties of the mTAZRP probabilities under permutation of the jump-rates between the sites. Finally, we explore a number of interesting special cases of the mTAZRP, and give explicit formulas for particle densities and correlations of the process purely in terms of modified Macdonald polynomials.
\end{abstract}

\maketitle

\setcounter{tocdepth}{1}
\tableofcontents

\section{Introduction}
Over the past several years, there has been a growing body of research devoted to exploring the relationships of particle models and integrable systems with symmetric (and associated nonsymmetric) functions; see~\cite{A21} for many examples. These connections have led to new combinatorial objects, new positive formulas~\cite{compactformula,CMW18,garbali-wheeler-2020,AAAM22+}, and paths to deeper understanding of both the symmetric functions and the associated particle processes. One such connection, coming from \cite{CGW-2015}, was the remarkable discovery that the symmetric Macdonald polynomial $P_{\lambda}(X;q,t)$ specializes to the partition function of the multispecies asymmetric simple exclusion process (ASEP) on a ring.

The family of Macdonald polynomials $P_{\lambda}(X;q,t)$ (in the variables $X=x_1,x_2,\ldots$ and with coefficients in $\mathbb{Q}(q,t)$), originally introduced by Macdonald \cite{Mac88}, is one of the most celebrated families of symmetric functions. The Macdonald polynomials are orthogonal symmetric polynomials defined by certain triangularity and orthogonality axioms, which notably contain as special cases the Schur functions, Hall-Littlewood polynomials, Jack polynomials, and $q$-Whittaker functions.  

The \emph{modified Macdonald polynomials} $\widetilde{H}_{\lambda}(X;q,t)$, introduced by Garsia and Haiman \cite{GarsiaHaiman96}, are a combinatorial version of the $P_{\lambda}$'s, obtained via plethysm from the integral form of the latter. These polynomials enjoy some particularly nice properties such as having positive integer coefficients and being Schur-positive. Following the discovery of the link between Macdonald polynomials $P_{\lambda}(X;q,t)$ and the partition function of the multispecies ASEP, a natural question was whether there exists a related statistical mechanics model for which some specialization of $\widetilde{H}_{\lambda}(X;q,t)$ is equal to its partition function. In this article, we describe such a model, which turns out to be a \emph{multispecies totally asymmetric zero range process} (mTAZRP), in a form which had previously been considered in \cite{takeyama-2015}. 
 
The zero range process (ZRP) is an important example of a non-equilibrium exactly-solvable interacting particle system. It was introduced by Spitzer in an influential paper that initiated the mathematical study of interacting particle systems in 1970~\cite{spitzer-1970}, along with the related simple exclusion process. The ZRP describes particles hopping from one site of a graph to another with a rate that depends solely on the content of the departure site and is independent of the destination site (hence the name `zero range'). Among the many reasons that the ZRP is of great interest to physicists is that it is an excellent toy model for the physics of phase-separation and condensation; see~\cite{EvansHanney} for a review. A \emph{totally asymmetric} ZRP (TAZRP) is a ZRP in which particles can only hop in one direction. In our setting, the underlying lattice is a ring (i.e.~a line with periodic boundary conditions).

We consider a multispecies TAZRP that has particles of different classes (or species), that is related to a wider class of multispecies zero range processes, still with integrable properties; see the review in \cite{kuniba-okado-watanabe-2017}. The species of particles in the system are determined by a partition $\lambda=(\lambda_1, \lambda_2,\ldots, \lambda_k)$ where $\lambda_1\geq \lambda_2\dots\geq \lambda_k>0$.
Such a system has $k$ particles in all, of species $\lambda_1, \lambda_2, \dots, \lambda_k$. Transitions in the mTAZRP on $n$ sites involve jumps of a single particle from a site to its clockwise neighbour with a rate in terms of parameters $t,x_1,\ldots,x_n$ that depends on the site, the species of the jumping particle, and other particles at that site. The total jump rate at site $j$, if it contains $m$ particles, is
$x_j^{-1} \, (1-t^m)/(1-t)$. See \cref{sec:tazrp} for the precise definition of the model.

In this work we study the stationary distribution of the mTAZRP. Single species zero range processes are notable for having stationary distributions of product form. However, the stationary distributions of multispecies systems have a much richer structure. Our approach involves considering fillings of the diagram of the partition $\lambda$ with entries in $\{1,\ldots, n\}$ -- see, for example, \cref{fig:polyqueue}. Let $\Tab(\lambda,n)$ denote the set of such fillings. In part I of this work \cite{AMM20} we introduced the ``queue inversion" statistic $\quinv:\Tab(\lambda,n)\mapsto\NN$ and used it to obtain a new formula for the modified Macdonald polynomial. Then
\[
\widetilde{H}_{\lambda}(x_1,\dots,x_n;1,t)
=
\sum_{\sigma\in\Tab(\lambda,n)}x^{\sigma}t^{\quinv(\sigma)}.
\]

We define a function $\proj$ from $\Tab(\lambda,n)$ to the configurations of the mTAZRP. 

The first main result in this article is that the stationary distribution of the mTAZRP on $n$ sites with particle types given by a partition $\lambda$ is the projection under the map $\proj$ of the distribution of the $\quinv$ statistic on $\Tab(\lambda)$:
\begin{thm}\label{thm:TAZRP-stat-distn}
Consider the mTAZRP on $n$ sites with particle species given by the partition $\lambda$. Then the stationary probability of a configuration $w$ in that mTAZRP equals
\[
\frac{1}{\mathcal{Z}_{(\lambda,n)}} \sum_{\substack{\sigma\in\Tab(\lambda,n)
\\
\proj(\sigma)=w}} x^{\sigma}t^{\quinv(\sigma)},
\] 
where $\mathcal{Z}_{(\lambda,n)}=\widetilde{H}_{\lambda}(x_1,\dots,x_n;1,t)$.
\end{thm}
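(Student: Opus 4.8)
The strategy is to realise the mTAZRP as a \emph{lumping} of a larger Markov chain living on tableaux. Concretely, I would (i) equip the finite set $\Tab(\lambda,n)$ with a continuous-time Markov dynamics, the ``tableau chain''; (ii) show that $\proj$ intertwines this chain with the mTAZRP, so that any stationary distribution of the tableau chain pushes forward to a stationary distribution of the mTAZRP; and (iii) show that the $\wt$-weighted probability measure $\mu(\sigma):=\wt(\sigma)/\mathcal Z_{(\lambda,n)}$ is stationary for the tableau chain. Combining (ii) and (iii) produces exactly the asserted formula (and, since the mTAZRP is irreducible, this is its unique stationary distribution); the evaluation $\mathcal Z_{(\lambda,n)}=\widetilde H_{\lambda}(x_1,\dots,x_n;1,t)$ is then just the main theorem of part~I~\cite{AMM20}.

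For step (i): a transition of the mTAZRP moves one particle from a site $j$ to its clockwise neighbour $j{+}1$, so on the tableau side it should correspond to changing the bottom entry of one column from $j$ to $j{+}1$ together with a prescribed rearrangement of the entries \emph{above} that bottom cell, chosen so as to land back in $\Tab(\lambda,n)$ and to keep the change in $\quinv$ under control. I expect this rearrangement to be a row-by-row ``bumping'' rule inside the column, in the spirit of the multiline-queue dynamics used for the multispecies ASEP, with a rate built out of $t$ and $x_j^{-1}$ in the same shape as the mTAZRP jump rates.

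For step (ii): one verifies the strong-lumpability condition that, for every $\sigma\in\Tab(\lambda,n)$ and every ordered pair of sites $j\to j{+}1$, the total rate of tableau-moves out of $\sigma$ whose $\proj$-image is ``a species-$\lambda_i$ particle jumps from $j$ to $j{+}1$'' depends only on $w=\proj(\sigma)$ and equals the corresponding mTAZRP rate out of $w$. This is a finite, local computation; it says precisely that $\proj$ carries the generator of the tableau chain to the generator of the mTAZRP, and hence that $\proj_*\mu$ is stationary for the mTAZRP whenever $\mu$ is stationary for the tableau chain.

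Step (iii) is the technical heart: one checks the master equation $\sum_{\sigma'}\mu(\sigma')\,\mathrm{rate}(\sigma'\!\to\!\sigma)=\mu(\sigma)\sum_{\sigma'}\mathrm{rate}(\sigma\!\to\!\sigma')$ for every $\sigma$. Because the mTAZRP is driven, with nonzero current around the ring, this cannot be reduced to detailed balance; instead I would hope for the balance to hold after splitting the generator into pieces indexed by the columns (or by adjacent pairs of sites), the key input being an explicit formula for how $\quinv$ changes under each elementary move, which together with the shape of the rates should reduce the remaining identity to a combinatorial cancellation --- provable either by an explicit weight-preserving bijection among the relevant fillings, or by induction on the number of rows of $\lambda$. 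I expect (iii) to be the main obstacle, and in fact the crux of the whole argument is designing the dynamics in step (i) so that the lumpability of (ii) and the $\quinv$-stationarity of (iii) hold \emph{at the same time}: the bumping rule must interact with $\quinv$ so precisely that all the master-equation terms cancel, and handling the contributions that wrap around the ring is where the real work lies.
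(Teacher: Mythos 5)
Your proposal follows essentially the same route as the paper: a tableau Markov chain on $\Tab(\lambda,n)$ whose transitions increment entries along a column (the paper's ringing-path maps $R_u$, corrected by swap operators $\tau_j$ exactly for the wrap-around $n\to 1$ issue you anticipate), a lumping verification that $\proj$ carries its generator to the mTAZRP generator, and a master-equation check for the $\wt$-measure that reduces to an explicit formula for the change in $\quinv$ under each move plus a combinatorial cancellation (the paper's Lemmas on $\up$/$\down$ and $\quinv$-differences). Your outline correctly identifies the crux --- designing the dynamics so that lumpability and $\quinv$-stationarity hold simultaneously --- and matches the paper's strategy in all essentials.
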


In this sense, the modified Macdonald polynomial $\widetilde{H}_{\lambda}$ specializes at $q=1$ to the \emph{partition function} for the mTAZRP model: all the stationary probabilities may be written as rational functions in $\mathbb{N}[t;x_1,\ldots,x_n]$ with denominator $\widetilde{H}_{\lambda}(x_1,\ldots,x_n;1,t)$.

All the objects outlined above will be defined formally in \cref{sec:tazrp,sec:Tab}. 

We prove \cref{thm:TAZRP-stat-distn} by constructing a Markov chain on the state space $\Tab(\lambda, n)$, which has two key properties:
\begin{itemize}
\item[(i)] it projects to the mTAZRP via the function $\proj$
\item[(ii)] the stationary probability of a filling $\sigma$ is  proportional to the weight $x^{\sigma}t^{\quinv(\sigma)}$. 
\end{itemize}

Our construction is very closely related to the approach involving multiline queues, which was first introduced for the TASEP~\cite{FM07} and has subsequently been used to describe the stationary distributions of a variety of multispecies interacting particle systems~\cite{FerrariMartinHAD, martin-2020,CMW18,AAAM22+}  and related objects such as Busemann functions for last-passage percolation~\cite{FanSeppalainen}. Particular applications include the description of “speed processes” and the computation of higher correlations~\cite{amir-angel-valko-2011,ayyer-linusson-2016}. There are close connections to the matrix product representation of the stationary distribution \cite{evans-ferrari-mallick-2009,prolhac-evans-mallick-2009,arita-mallick-2013,ayyer-linusson-2014}. The constructions in this paper could also be written in terms of multiline queues, but that picture is made more complicated by the need to keep track of the motion of several different particles at the same site. In this setting, the tableau construction provides a particularly convenient way to represent the data and to describe the dynamics.

A key point of interest is the presence of the parameters $x_1, x_2, \dots, x_n$ representing site-dependent rates in the mTAZRP. In contrast, for the ASEP for example, existing results cover only the case where all rates are equal, involving the specialization of the Macdonald polynomials to $x_1=\cdots=x_n=1$. By exploiting symmetries of fillings, we obtain interesting new symmetry properties for the mTAZRP under permutation of the state-dependent jump rates:

\begin{thm}\label{thm:TAZRP-symmetry}
Consider the stationary distribution of the mTAZRP on $n$ sites with particle species given by $\lambda$, and parameters $x_1, \dots, x_n, t$. 
The distribution of the configuration restricted to sites $1, \dots, \ell$ is symmetric in the variables $\{x_{\ell+1}, \dots, x_n\}$.
\end{thm}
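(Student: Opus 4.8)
The plan is to reduce the assertion to a symmetry property of a weighted sum over fillings, and then prove that property by a Bender--Knuth-type involution. By \cref{thm:TAZRP-stat-distn}, if $v$ denotes a configuration of the process on the sites $1,\dots,\ell$ alone, then the stationary probability that the configuration restricted to those sites equals $v$ is
\[
\frac{1}{\mathcal{Z}_{(\lambda,n)}}\sum_{\substack{\sigma\in\Tab(\lambda,n)\\ \proj(\sigma)\ \text{agrees with}\ v\ \text{on sites}\ 1,\dots,\ell}}\wt(\sigma).
\]
Since $\widetilde H_\lambda$ is a symmetric function, the denominator $\mathcal{Z}_{(\lambda,n)}=\widetilde H_\lambda(x_1,\dots,x_n;1,t)$ is already symmetric in all of $x_1,\dots,x_n$; hence it suffices to prove that the numerator $N_v$ (the sum above without the factor $1/\mathcal{Z}_{(\lambda,n)}$) is symmetric in $x_{\ell+1},\dots,x_n$.

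The symmetric group on $\{x_{\ell+1},\dots,x_n\}$ is generated by the adjacent transpositions $x_j\leftrightarrow x_{j+1}$ for $\ell+1\le j\le n-1$, so I would fix one such $j$ and produce a bijection on the relevant fillings realizing that swap. For this I would invoke the monomial-symmetry involution behind the symmetry of $\widetilde H_\lambda$: for each $j$ there is an involution $\phi_j\colon\Tab(\lambda,n)\to\Tab(\lambda,n)$ with $\quinv(\phi_j(\sigma))=\quinv(\sigma)$ which leaves every cell with entry not in $\{j,j+1\}$ untouched, replaces the entries equal to $j$ or $j+1$ by entries in $\{j,j+1\}$, and interchanges the number of $j$'s with the number of $(j+1)$'s; consequently $\wt(\phi_j(\sigma))$ equals $\wt(\sigma)$ with $x_j$ and $x_{j+1}$ interchanged. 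This is the quinv analogue of the classical Bender--Knuth involution; it is available from part~I (at least implicitly in the proof of the tableau formula there), or it can be built directly from the definition of $\quinv$.

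The observation that makes the theorem fall out is that $\phi_j$ preserves $\proj(\sigma)$ on the sites $1,\dots,\ell$ whenever $j\ge\ell+1$. Indeed, that restricted configuration is determined by the multiset of pairs $\bigl(\lambda_i,\ \text{bottom entry of column }i\bigr)$ over the columns $i$ whose bottom entry lies in $\{1,\dots,\ell\}$. Under $\phi_j$ the bottom entry of a column is either unchanged (if it is not $j$ or $j+1$) or remains in $\{j,j+1\}$; since $j,j+1>\ell$, the set of columns with bottom entry $\le\ell$, together with those entries, is unaffected, and the columns whose bottom entry is $j$ or $j+1$ contribute nothing to the restriction to $\{1,\dots,\ell\}$. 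Therefore $\phi_j$ restricts to an involution of the set of fillings contributing to $N_v$, so $N_v$ is invariant under $x_j\leftrightarrow x_{j+1}$; letting $j$ run over $\ell+1,\dots,n-1$ yields the symmetry of $N_v$ in $x_{\ell+1},\dots,x_n$, and dividing by the fully symmetric $\mathcal{Z}_{(\lambda,n)}$ finishes the proof.

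The main obstacle is the first ingredient, namely an involution $\phi_j$ that is simultaneously bijective, $\quinv$-preserving, and \emph{local}, i.e.\ acting only on the entries $j$ and $j+1$. Locality is precisely what lets the projection step go through, so one cannot get away with merely quoting the abstract symmetry of $\widetilde H_\lambda$; a concrete combinatorial move is needed. Checking that a local rearrangement of the $j$'s and $(j+1)$'s leaves $\quinv$ invariant comes down to showing that $\quinv(\sigma)$ splits into a contribution depending only on the relative arrangement of the entries in $\{j,j+1\}$ and a contribution insensitive to their precise positions; this is the real bookkeeping, carried out in part~I, after which all of the steps above are immediate.
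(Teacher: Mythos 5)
Your reduction is sound and in fact mirrors the paper's own framing: the paper likewise observes that the restriction of $\proj(\sigma)$ to sites $1,\dots,\ell$ is determined entirely by the cells of $\sigma$ carrying entries in $[\ell]$, fixes that sub-filling $\nu$ on a cell set $S$, and is left to show that the complementary sum $C_{\lambda,\nu}=\sum_{\sg}t^{\quinv(\nu,\sg)}x^{(\nu,\sg)}$ over fillings of $\dg(\lambda)\setminus S$ with entries in $\{\ell+1,\dots,n\}$ is symmetric in $x_{\ell+1},\dots,x_n$ (\cref{lem:C}). Up to that point your argument and the paper's coincide.

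The genuine gap is the ingredient you yourself flag as ``the main obstacle'': the local, $\quinv$-preserving Bender--Knuth-type involution $\phi_j$. This is not available from part~I, even implicitly. The symmetry of the quinv formula in \cite{AMM20} is proved by the same route this paper takes for \cref{lem:C} --- namely by writing the relevant sum as a nonnegative combination of LLT polynomials (via the identity $\quinv(\sigma)=\widehat{\inv}(\sigma)-\widehat{\A}(\lambda,D(\sigma))$, a refinement over descent sets, and the ribbon bijection $\sh$), and then quoting the known symmetry of LLT polynomials. That is a generating-function argument, not a bijective one; no entry-local involution exchanging the multiplicities of $j$ and $j{+}1$ while preserving $\quinv$ is constructed there, and producing one is a nontrivial problem in its own right (already for LLT polynomials no simple Bender--Knuth move is known in general; the single-row case, where $\quinv$ reduces to $\inv$ on words, is easy, but the inter-row triples obstruct any naive localization). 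So as written, your proof defers its entire content to an unproven combinatorial lemma. To close the gap along the paper's lines, replace $\phi_j$ by the LLT expansion of $C_{\lambda,\nu}$ in the variables $x_{\ell+1},\dots,x_n$; alternatively, an honest construction of $\phi_j$ would be a stronger (bijective) result than what the paper proves, but it is not something you can cite.
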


In the special case $t=0$, a stronger result can be proved involving \textit{pathwise} symmetry properties of the mTAZRP process, rather than those involving a single configuration drawn from the stationary distribution -- see Theorem \ref{thm:stronger-symmetry}. The proof involves interchangeability properties of queueing servers with different rates. It is an interesting open problem whether such pathwise symmetry properties also hold for $t>0$. 

The article is organized as follows. The mTAZRP is formally defined in \cref{sec:tazrp}. The necessary background on fillings of diagrams and tableaux statistics is given in \cref{sec:Tab}. \cref{sec:Markov} contains the definition of the Markov chain on $\Tab(\lambda,n)$ and the proof of our main results, modulo some technical results that are postponed to \cref{sec:proofs}. \cref{sec:special} gives nice formulas for two special cases of the mTAZRP: the single species case and the $t=0$ case. In \cref{sec:partition}, we study the partition function of the mTAZRP and its properties. In Section \ref{sec:symmetry} and Section \ref{sec:stronger-symmetry}, we study the symmetry properties of the mTAZRP and prove Theorem \ref{thm:TAZRP-symmetry}, along with its extension in the case $t=0$. In Section \ref{sec:dens-curr}, we obtain formulas for single-site densities and currents of particles of a given species. In Section \ref{sec:top}, we discuss an interesting consistency property of the tableau process. Finally, the connection of the tableaux process with multiline queues is explained in \cref{sec:multiline}.

\bigskip

\noindent{\bf Acknowledgements:~} 
We would like to thank the anonymous referee for useful comments. AA was partially supported by SERB Core grant CRG/2021/001592 as well as the DST FIST program - 2021 [TPN - 700661]. OM was partially supported by NSERC grant RGPIN-2021-02568 and NSF grant DMS-1953891.

\section{Definition of the mTAZRP}
\label{sec:tazrp}

A \emph{partition} $\lambda = (\lambda_1, \dots, \lambda_k)$ is a weakly decreasing sequence of positive integers. It will be convenient at times to think of a partition as an infinite sequence with infinitely many trailing zeros. We will sometimes denote partitions using the frequency notation $\lambda = \langle 1^{m_1},\dots,j^{m_j},\dots \rangle$, which means that the part $j$ occurs $m_j$ times for $j=1,2,\ldots$. When all multiplicities $m_j$ are equal to 0 or 1
(i.e.\ when all the $\lambda_i$ are distinct),
$\lambda$ is called a \emph{strict partition}. 
The configurations of the mTAZRP are determined by a partition $\lambda$ 
and a positive integer $n$. 

We consider a one-dimensional lattice with $n$ sites (labelled $1, 2, \ldots,n$) with periodic boundary conditions, containing $k$ particles of types (or species) $\{\lambda_1, \ldots,\lambda_k\}$. Our convention is that particles with larger labels are stronger. The set of configurations of the model consists of all possible arrangements of the $k$ particles amongst the $n$ sites -- each site may contain an arbitrary number of particles, and particles of the same type are indistinguishable.  We may identify configurations as multiset compositions of type $\lambda$ with $n$ parts; that is, sequences $w=(w_1, \dots, w_n)$ where for $1 \leq j \leq n$, $w_j$ is a (possibly empty) multiset, such that the union of all the parts $\biguplus_{j=1}^n w_j$ is equal to the multiset $\lambda$. We denote the set of configurations by $\TAZRP(\lambda,n)$. 

\begin{figure}[h]
\centering
\begin{tikzpicture}[
  ->,   
  thick,
  main node/.style={circle, white,fill=none, draw},
]
  \newcommand*{\MainNum}{5}
  \newcommand*{\MainRadius}{1.5cm} 
  \newcommand*{\MainStartAngle}{90}

  \path
    (0, 0) coordinate (M)
    \foreach \t [count=\i] in {{\ \ \textcolor{black}{$\emptyset$} \ \ \ }, {\textcolor{black}{3,1,1}}, {\ \ \textcolor{black}{$\emptyset$}\ \ }, {\ \textcolor{black}{4,2,2}\ }, {\textcolor{black}{3,2,1}}} {
      +({\i-1)*360/\MainNum + \MainStartAngle}:\MainRadius)
      node[main node, align=center] (p\i) {\t}
    }
  ;  

    \foreach \i in {1, ..., \MainNum} {
    \pgfextracty{\dimen0 }{\pgfpointanchor{p\i}{north}} 
    \pgfextracty{\dimen2 }{\pgfpointanchor{p\i}{center}}
    \dimen0=\dimexpr\dimen2 - \dimen0\relax 
    \ifdim\dimen0<0pt \dimen0 = -\dimen0 \fi
    \pgfmathparse{2*asin(\the\dimen0/\MainRadius/2)}
    \global\expandafter\let\csname p\i-angle\endcsname\pgfmathresult
  }

  \foreach \i [evaluate=\i as \nexti using {int(mod(\i, \MainNum)+1}]
  in {1, ..., \MainNum} {  
    \pgfmathsetmacro\StartAngle{   
      (\i-1)*360/\MainNum + \MainStartAngle
      + \csname p\i-angle\endcsname
    }
    \pgfmathsetmacro\EndAngle{
      (\nexti-1)*360/\MainNum + \MainStartAngle
      - \csname p\nexti-angle\endcsname
    }
    \ifdim\EndAngle pt < \StartAngle pt
      \pgfmathsetmacro\EndAngle{\EndAngle + 360}
    \fi
    \draw
      (M) ++(\EndAngle:\MainRadius)
      arc[start angle=\EndAngle, end angle=\StartAngle, radius=\MainRadius]
    ;
  }
   \node at (-3,0) {(a)};
\end{tikzpicture}
\qquad\qquad\qquad
\begin{tikzpicture}[
  ->,   
  thick,
  main node/.style={circle, white,fill=none, draw},
]
  \newcommand*{\MainNum}{5}
  \newcommand*{\MainRadius}{1.5cm} 
  \newcommand*{\MainStartAngle}{90}

  \path
    (0, 0) coordinate (M)
    \foreach \t [count=\i] in {{\ \ \textcolor{black}{$\emptyset$} \ \ \ }, {\textcolor{black}{3,1,1}}, {\ \ \textcolor{black}{$\emptyset$}\ \ }, {\ \textcolor{black}{4,2,2,2}\ }, {\textcolor{black}{3,1}}} {
      +({\i-1)*360/\MainNum + \MainStartAngle}:\MainRadius)
      node[main node, align=center] (p\i) {\t}
    }
  ;

  \foreach \i in {1, ..., \MainNum} {
    \pgfextracty{\dimen0 }{\pgfpointanchor{p\i}{north}} 
    \pgfextracty{\dimen2 }{\pgfpointanchor{p\i}{center}}
    \dimen0=\dimexpr\dimen2 - \dimen0\relax 
    \ifdim\dimen0<0pt \dimen0 = -\dimen0 \fi
    \pgfmathparse{2*asin(\the\dimen0/\MainRadius/2)}
    \global\expandafter\let\csname p\i-angle\endcsname\pgfmathresult
  }

  \foreach \i [evaluate=\i as \nexti using {int(mod(\i, \MainNum)+1}]
  in {1, ..., \MainNum} {  
    \pgfmathsetmacro\StartAngle{   
      (\i-1)*360/\MainNum + \MainStartAngle
      + \csname p\i-angle\endcsname
    }
    \pgfmathsetmacro\EndAngle{
      (\nexti-1)*360/\MainNum + \MainStartAngle
      - \csname p\nexti-angle\endcsname
    }
    \ifdim\EndAngle pt < \StartAngle pt
      \pgfmathsetmacro\EndAngle{\EndAngle + 360}
    \fi
    \draw
      (M) ++(\EndAngle:\MainRadius)
      arc[start angle=\EndAngle, end angle=\StartAngle, radius=\MainRadius]
    ;
  }
  \node at (-3,0) {(b)};
\end{tikzpicture}
\caption{(a) The configuration in this figure is $(\cdot|321|422|\cdot|311)$ (read clockwise starting from the topmost site), and is an element of $\TAZRP(\lambda,5)$ for $\lambda=(4,3,3,2,2,2,1,1,1)$. The arrows show the direction of hopping. (b) The configuration is $(\cdot|31|4222|\cdot|311)$ after a particle of type 2 hops from site 2 to site 3. This hop occurs with rate $x_2^{-1}t$.}\label{fig:tazrp}
\end{figure}

The system evolves as a continuous-time Markov chain with a global parameter $t$ and site-dependent parameters $x_1,\ldots,x_n$. Any transition of the system consists of a single particle jumping from site $j$ to site $j+1$, for some $1 \leq j\leq n$ (sites are considered cyclically mod $n$, so that a particle jumping out of site $n$ enters site $1$). See \cref{fig:tazrp} for an example.

The transition rates are defined as follows. For each $j\in\{1,\dots,n\}$ and each $a\geq 1$, a bell of level $a$ rings at site $j$ at rate
$x_j^{-1} t^{a-1}$. 
When such a bell rings: if site $j$ contains at least $a$ particles, 
then the $a$'th particle (when numbered from largest to smallest) jumps to site $j+1$. If $j$ contains fewer than $a$ particles, nothing changes. Expressed in another way, if the number of particles of type $r$ at site $j$ is $c_r$, and the number of particles of type 
larger than $r$ at site $j$ is $d_r$, then the total rate of a particle of type $r$ jumping from site $j$ to site $j+1$ is 
$x_j^{-1} t^{d_r}\sum_{i=0}^{c_r-1} t^{i}$.

\begin{example}
$\TAZRP((3,1,1),3)$ consists of the following
$18$ states: 
\begin{align*}
(311|\cdot|\cdot), (31|1|\cdot), (31|\cdot|1), (3|11|\cdot), (3|1|1), (3|\cdot|11), (11|3|\cdot), (11|\cdot|3), (1|31|\cdot),\\ 
(1|1|3), (1|3|1), (1|\cdot|31), (\cdot|311|\cdot), (\cdot|31|1), (\cdot|3|11), (\cdot|11|3), (\cdot|1|31), (\cdot|\cdot|311).
\end{align*}
Examples of transitions of the mTAZRP on $\TAZRP((3,1,1),3)$ are: 
\begin{itemize}[itemsep=0pt]
\item The jumps from $(\cdot|311|\cdot)$ are to $(\cdot|11|3)$ with rate $x_2^{-1}$, and
to $(\cdot|31|1)$ with rate $x_2^{-1}(t+t^2)$;
\item The jumps from $(\cdot|1|31)$ are to $(3| 1| 1)$ with rate $x_3^{-1}$, 
to $(1| 1|3)$ with rate $x_3^{-1}t$, and to $(\cdot|\cdot|311)$ with rate $x_2^{-1}$. 
\end{itemize}
\end{example}

A multi-species TAZRP with $r$ different species of particles
can be seen as a coupling of $r$ single-species TAZRPs: 
\begin{prop}
\label{prop:colouring}
Let $n$ be a positive integer, $\lambda = (\lambda_1,\dots,\lambda_k)$ be a partition, and $j \in [k-1]$ such that $\lambda_j \neq \lambda_{j+1}$. Then the multispecies TAZRP on $\TAZRP(\lambda,n)$ lumps to the 
single species TAZRP on $\TAZRP(\langle 1^j \rangle,n)$.
\end{prop}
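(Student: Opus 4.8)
The plan is to make the ``colouring'' explicit as a lumping map and then verify the strong lumpability criterion directly from the bell description of the rates. Write $\lambda=(\lambda_1,\dots,\lambda_k)$ and fix $j\in[k-1]$ with $\lambda_j>\lambda_{j+1}$ (this is the content of the hypothesis $\lambda_j\neq\lambda_{j+1}$, since $\lambda$ is weakly decreasing). Call the $j$ particles of species $\lambda_1,\dots,\lambda_j$ \emph{strong} and the remaining $k-j$ particles \emph{weak}; by the choice of $j$, every strong particle carries a species label strictly larger than every weak one. Define $\phi\colon\TAZRP(\lambda,n)\to\TAZRP(\langle 1^j\rangle,n)$ by letting $\phi(w)_p$ be the multiset consisting of $s_p(w)$ copies of the part $1$, where $s_p(w)$ denotes the number of strong particles at site $p$ in $w$. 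Since there are $j$ strong particles in all, $\phi(w)\in\TAZRP(\langle 1^j\rangle,n)$, and $\phi$ is clearly surjective. I will show that the mTAZRP is lumpable with respect to the partition of $\TAZRP(\lambda,n)$ into fibres of $\phi$, and that the lumped chain is exactly the single-species TAZRP on $\TAZRP(\langle 1^j\rangle,n)$.

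The key input is the ``bell'' formulation from \cref{sec:tazrp}: at site $p$ a bell of level $a$ rings at rate $x_p^{-1}t^{a-1}$, and when it rings, the $a$-th highest-numbered particle at site $p$ jumps to site $p+1$ (nothing happens if site $p$ holds fewer than $a$ particles). Suppose site $p$ holds $m$ particles, of which $s=s_p(w)$ are strong. Because every strong label exceeds every weak one, the $a$-th highest-numbered particle at site $p$ is strong exactly when $1\le a\le s$, is weak exactly when $s<a\le m$, and does not exist when $a>m$. Hence the total rate at which \emph{some} strong particle leaves site $p$ equals $\sum_{a=1}^{s}x_p^{-1}t^{a-1}=x_p^{-1}(1-t^{s})/(1-t)$, which depends on $w$ only through $s=s_p(w)$; and this is precisely the total jump rate out of a site containing $s$ particles in the single-species TAZRP. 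Every other transition of the mTAZRP either moves a weak particle or changes nothing, and in both cases leaves $\phi(w)$ unchanged.

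Combining these observations: from any $w$ with $\phi(w)=v$, the total rate of transitions moving into the fibre $\phi^{-1}(v^{(p)})$, where $v^{(p)}$ is obtained from $v$ by moving one unit from site $p$ to site $p+1$, equals $x_p^{-1}(1-t^{v_p})/(1-t)$, while all remaining transitions stay inside $\phi^{-1}(v)$. Both quantities depend on $w$ only through $v=\phi(w)$, which is exactly the strong lumpability condition, and the induced transition rates on $\TAZRP(\langle 1^j\rangle,n)$ coincide with those of the single-species TAZRP. This gives the claimed lumping.

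The only step requiring care — and the only place the hypothesis is used — is the identification ``the $a$-th highest-numbered particle at site $p$ is strong if and only if $a\le s_p(w)$''. This relies on no single species straddling the strong/weak cut, i.e.\ on $\lambda_j>\lambda_{j+1}$; if instead $\lambda_j=\lambda_{j+1}$, the rate at which a ``strong'' particle leaves a site would depend on the arrangement of the equal labels within that site, and lumpability would fail. Everything else is routine manipulation of the geometric sums $x_p^{-1}(1-t^{s})/(1-t)$ already recorded in \cref{sec:tazrp}.
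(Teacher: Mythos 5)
Your proof is correct and rests on the same key observation as the paper's: because the bell of level $a$ always selects the $a$-th strongest particle at a site, the strong particles (those with labels $\geq \lambda_j$) evolve autonomously, and their aggregate jump rate out of a site holding $s$ of them is $x_p^{-1}(1-t^s)/(1-t)$, which is exactly the single-species rate. The paper phrases this as a composition of two lumpings (first discard all particles with labels below $\lambda_j$, then merge the labels in $[\lambda_j,\lambda_1]$ into one species), whereas you verify the strong lumpability criterion directly from the bell description; the mathematical content is the same.
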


\begin{proof}
Notice first that the jump rates of particles of species $a$
do not depend on the locations of species 
with lower labels. If we ignore all particles
with labels lower than some threshold, the remaining particles
still evolve as a multispecies TAZRP (with fewer classes). 

Secondly, relabelling all particles with labels in some interval $[a,b]$ as $a$ does not affect the total jump rate of that collection of particles -- this operation again results in an mTAZRP with fewer classes. 

Combining these operations, we may ignore all the particles
whose label is lower than $\lambda_j$, and relabel all particles
with labels in $[\lambda_1, \lambda_j]$ as a single species, to 
obtain a single-species TAZRP with $j$ particles. 
\end{proof}

\section{Preliminaries on fillings of diagrams}
\label{sec:Tab}

Let $\lambda=(\lambda_1,\ldots,\lambda_k)$ be a partition. The \emph{diagram of type $\lambda$}, which we denote $\dg(\lambda)$, consists of
the cells $\{(r,i), 1\leq i\leq k, 1\leq r\leq \lambda_i\}$, which we depict using
$k$ bottom-justified columns, where the $i$'th column from left to right has $\lambda_i$ boxes. See \cref{fig:readingorder} for an illustration of $\dg(3,2,1,1)$.
The cell $(r,i)$ corresponds to the cell in the $i$'th column in the $r$'th row of $\dg(\lambda)$, where rows are labeled from bottom to top. 
We warn the reader that this is the opposite convention of labelling points in the plane using cartesian coordinates, but is somewhat standard in the literature. We will use this convention throughout the article. 

A \emph{filling} of type $(\lambda,n)$ is a function $\sigma:\dg(\lambda) \rightarrow [n]$ defined on the cells of $\dg(\lambda)$ (where $[n]=\{1,2,\dots, n\}$).
We also refer to a diagram together with a filling of it as a \emph{tableau}. Let $\Tab(\lambda,n)$ be the set of fillings of type $(\lambda, n)$, and $\Tab(\lambda)$ the set of fillings $\sigma:\dg(\lambda) \rightarrow \mathbb{Z}^+$.

\begin{center}
\begin{figure}[h]
\[
\tableau{1\\3&2\\7&6&5&4}
\]
\caption{The diagram $\dg(3,2,1,1)$ with the cells labeled according to the reading order in \cref{def:readingorder}. Our convention is that the cell labelled $5$ has coordinates $(1,3)$.}
\label{fig:readingorder}
\end{figure}
\end{center}

For $\sigma$ in $\Tab(\lambda, n)$ (or in $\Tab(\lambda)$), and a cell $x=(r,i)$ of $\dg(\lambda)$, we write $\sigma(x)=\sigma(r,i)$ and call this the \emph{content} of the cell $x$ in the filling $\sigma$. For convenience, we also set the convention $\sigma(\lambda_i+1,i)=0$, where $(\lambda_i+1,i)$ is the nonexistent cell above column $i$. Define $\South(x)$ and $\North(x)$ to be the cells $(r-1,i)$ and $(r+1,i)$, directly below and above cell $x$ (respectively) in the same column; if $x$ is the bottom-most (\emph{resp.}~top-most) cell of its column, $\South(x)=\emptyset$ (\emph{resp.}~$\North(x)=\emptyset$). 

\begin{defn}
\label{def:readingorder}
Define the \emph{reading order} on the cells of a tableau to be along the rows from right to left,
taking the rows from top to bottom. 
\end{defn}

See \cref{fig:readingorder} for an illustration of the reading order.

\begin{defn}\label{def:quinv}
Given a diagram $\dg(\lambda)$, a \emph{triple} consists of either
\begin{itemize}
\item three cells $(r+1,i)$, $(r,i)$ and $(r,j)$ with $i<j$; or
\item two cells $(r,i)$ and $(r,j)$ with $i<j$ and  $\lambda_i = r$
(in which case the triple is called a \emph{degenerate triple}).
\end{itemize}
Let us write $a=\sigma(r+1 ,i)$, $b=\sigma(r,i)$, and $c=\sigma(r,j)$, 
for the contents of the cells of the triple, so that we can 
depict a triple along with its contents as
\begin{center}
\raisebox{-10pt}{\qtrip{$a$}{$b$}{$c$}}, \qquad or \qquad \raisebox{-10pt}{\begin{tikzpicture}[scale=0.5]
\node at (-0.5,-.5) {$0$};
 \cell{2}{0}{$b$} \cell{2}{2.7}{$c$}
\node at (1,-1.5) {$\cdots$};
\end{tikzpicture}} \quad (degenerate).
\end{center}

We say that a triple is a \emph{queue inversion triple}, or a \emph{quinv triple} for short, if its entries are oriented counterclockwise when the entries are read in increasing order, with ties being broken with respect to reading order. If the triple is degenerate with content $b,c$, it is a $\quinv$ triple if and only if $b<c$.
(This is equivalent to thinking of a degenerate triple as a regular triple with $a=0$).
\end{defn}

Accordingly, define $\cQ$ to be the set of ordered triples of contents such that 
\begin{equation}
\label{cQdef}
\cQ=\big\{(a,b,c)\in[n]^3:
a<b<c,
\text{ or } b<c<a,
\text{ or } c<b<a,
\text{ or } a=b\ne c \big\}.
\end{equation}
Then, the triple $((r+1,i),(r,i),(r,j))$, where $i<j$, forms a quinv triple in $\sigma$ if and only if\linebreak $(\sigma(r+1,i),\sigma(r,i),\sigma(r,j))\in \cQ$.

 \begin{figure}[ht!]
 \begin{tikzpicture}[scale=0.5]
  \node at (0,0) {\tableau{3\\1&1&2\\1&3&2&3&1}};
 \end{tikzpicture}
  \centering
  \caption{A tableau of type $\lambda=(3,3,2,2,2,1,1)$ and $n=3$. The weight of this filling is $x_1^4x_2^2x_3^3t^7$.} 
  \label{fig:polyqueue}
 \end{figure}

\begin{defn}\label{def:stats}
The \emph{weight} of a filling $\sigma$ is $x^{\sigma}t^{\quinv(\sigma)}$, where:
\begin{itemize}
\item $x^{\sigma}=\prod_{u\in\dg(\lambda)} x_{\sigma(u)}$ is the monomial corresponding to the content of $\sigma$,
\item $\quinv(\sigma)$ is the total number of $\quinv$ triples in $\sigma$.
 \end{itemize}
 \end{defn}
 
See \cref{fig:polyqueue} for an example of a tableau and its weight.

Originally, the primary motivation for studying fillings with the quinv statistic was to obtain a new tableaux formula for the modified Macdonald polynomials, which we showed in the prequel to this article \cite{AMM20}.
\begin{thm}[\cite{AMM20}]\label{thm:H}
Let $\lambda$ be a partition. The modified Macdonald polynomial can be written as
\[
\widetilde{H}_{\lambda}(X;q,t) = \sum_{\sigma\in\Tab(\lambda)} x^{\sigma}t^{\quinv(\sigma)}q^{\maj(\sigma)}.
\]
where $\maj(\sigma)$ is the major index statistic, defined in \cite{HHL05}.
\end{thm}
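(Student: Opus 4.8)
The statement is the main theorem of the prequel \cite{AMM20}, so I describe the route I would take to establish it from scratch. The plan is to compare the claimed generating function
\[
C_\lambda(X;q,t):=\sum_{\sigma\in\Tab(\lambda)} x^{\sigma}\, q^{\maj(\sigma)}\, t^{\quinv(\sigma)}
\]
with the Haglund--Haiman--Loehr formula for $\widetilde{H}_\lambda$, which (in the conventions of the present diagram) has the identical shape $\sum_\sigma x^{\sigma} q^{\maj(\sigma)} t^{(\,\cdot\,)}$ but with the HHL inversion count $\inv(\sigma)$ in place of $\quinv(\sigma)$ — the two statistics differing only in \emph{which} triples of cells are counted, not in the weight $x^\sigma$ or in $\maj$. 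Since the monomial weight and $\maj$ are common to both formulas, it suffices to produce a bijection $\Phi\colon\Tab(\lambda)\to\Tab(\lambda)$ with $x^{\Phi(\sigma)}=x^\sigma$, $\maj(\Phi(\sigma))=\maj(\sigma)$, and $\inv(\Phi(\sigma))=\quinv(\sigma)$.

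The first reduction is that $\maj$ is column-local: it depends on $\sigma$ only through which cells $u$ satisfy $\sigma(u)>\sigma(\South(u))$. I would therefore partition both sums according to this ``descent data,'' so that within a class $\maj$ is constant and the task reduces to the purely combinatorial identity $\sum t^{\inv}=\sum t^{\quinv}$ taken over all fillings with a prescribed descent pattern. Both $\inv$ and $\quinv$ decompose into contributions of triples, and the contribution of a triple depends only on the relative order of its entries together with the (shape-determined) reading-order tie-break. The bijection $\Phi$ would then be built one row at a time, processing from the bottom upward: given the filling of all lower rows (already transformed), one rearranges the entries of the current row among its columns — this preserves $x^\sigma$ automatically and can be arranged to preserve the descent relations with the row below, hence $\maj$ — so that the $\inv$-contributions of the triples straddling the current row and the one beneath are converted into the corresponding $\quinv$-contributions.

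The heart of the matter, and the step I expect to be the main obstacle, is this single-row rearrangement lemma: for two fixed adjacent rows with prescribed entry-multisets and a prescribed descent pattern between them, one must exhibit a bijection on the horizontal arrangements that matches $\inv$-triple counts with $\quinv$-triple counts. I would search for such a map in the form of a priority-queue / ``ball-dropping'' procedure — this is, after all, the combinatorial dynamics that the queue-inversion statistic was designed to mirror — scanning the columns left to right and inserting each entry into a queue ordered by value, with ties broken by reading order; the subtlety is to check that this local operation is genuinely bijective and composes consistently up the diagram without disturbing already-fixed structure. As a fallback to the bijective route, one could instead verify directly that $C_\lambda$ satisfies the properties characterizing $\widetilde{H}_\lambda$ — symmetry in $X$, the two triangularity conditions for $C_\lambda[X(1-q);q,t]$ and $C_\lambda[X(1-t);q,t]$ in the Schur basis, and the normalization $C_\lambda[1;q,t]=1$ — adapting HHL's original argument: the symmetry would follow by recognizing each fixed-descent class as an $\LLT$ polynomial in a suitably twisted alphabet and invoking symmetry of $\LLT$ polynomials, and the triangularity conditions would be established by sign-reversing involutions on the relevant signed sums.
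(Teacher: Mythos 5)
First, note that the paper you are reading does not prove this statement: \cref{thm:H} is imported wholesale from the prequel \cite{AMM20}, so there is no in-paper argument to match. The paper does, however, reveal part of the prequel's strategy (see the discussion around \cref{lem:C}): decompose the sum over fillings according to descent sets, recognize each class as an LLT polynomial times $x^{\nu}q^{\maj}t^{-\widehat{\A}}$, and adapt the HHL symmetry-plus-triangularity argument. Your fallback route --- verifying symmetry via LLT expansions and then checking the two plethystic triangularity axioms and the normalization --- is therefore essentially the established proof, and identifying it as a viable path is correct.

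Your primary route, a bijection $\Phi$ on $\Tab(\lambda)$ with $x^{\Phi(\sigma)}=x^{\sigma}$, $\maj(\Phi(\sigma))=\maj(\sigma)$ and $\inv(\Phi(\sigma))=\quinv(\sigma)$, has a genuine gap, and in fact no such bijection is known. Concretely: $\maj$ is not merely a function of ``which cells are descents relative to the row below'' as a pattern --- each descent cell $u$ contributes $\leg(u)+1$, and $\leg(u)$ depends on the height of the column containing $u$. A horizontal rearrangement of a row that preserves the number (or even the multiset of values) of descents can still move a descent from a tall column to a short one and change $\maj$, so ``preserve the descent relations with the row below, hence $\maj$'' does not follow unless all columns meeting that row have equal height. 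Second, the row-by-row composition does not close up: rearranging row $r$ alters which cells of row $r+1$ are descents and alters the triples straddling rows $r+1$ and $r$, both of which are supposed to be handled at the next stage; the next stage must then simultaneously repair the $\maj$-contribution against the \emph{new} row $r$ and match the triple count of the \emph{original} filling, and there is no a priori reason these two constraints are jointly satisfiable. Even the two-row kernel identity you isolate ($\sum t^{\inv}=\sum t^{\quinv}$ over arrangements with prescribed descent data) is nontrivial because the two statistics anchor their triples to different rows, which is exactly why the prequel routes the comparison through symmetric-function identities (LLT polynomials attached to reflected ribbon tuples) rather than through a direct bijection. So the step you flag as ``the heart of the matter'' is indeed where this approach founders, and the correct conclusion is to fall back to your second route.
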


The main focus of this article will be to obtain formulas for 
the stationary probabilities and correlations of the mTAZRP on $\TAZRP(\lambda, n)$
using weights on the fillings in $\Tab(\lambda,n)$. 

We associate some set of fillings in $\Tab(\lambda,n)$ to each state of the TAZRP by mapping the bottom row of a filling in $\Tab(\lambda,n)$ to a state of a TAZRP of type $\lambda$ on $n$ sites. The correspondence is given as a function $\proj: \Tab(\lambda,n) \rightarrow \TAZRP(\lambda,n)$, as follows.

\begin{defn}\label{def:f}
Let $\sigma\in\Tab(\lambda,n)$. Then $\proj(\sigma)=(w_1,\ldots,w_n)$, where for $1\leq j \leq n$, 
\[
w_j=\{\lambda_i\in\lambda\ :\ \sigma(1,i)=j\}
\]
is the multiset of the heights of the columns of $T$ whose bottom-most entry is $j$. 
\end{defn}

\begin{example} We show all six tableaux corresponding to $w=(\cdot|21|1) \in\TAZRP((2,1,1),3)$:

\begin{center}
\begin{tikzpicture}[scale=0.5]
\cell00{$1$}
\cell10{$2$}\cell11{$3$}\cell12{$2$}

\cell05{$2$}
\cell15{$2$}\cell16{$3$}\cell17{$2$}

\cell0{10}{$3$}
\cell1{10}{$2$}\cell1{11}{$3$}\cell1{12}{$2$}

\cell0{15}{$1$}
\cell1{15}{$2$}\cell1{16}{$2$}\cell1{17}{$3$}

\cell0{20}{$2$}
\cell1{20}{$2$}\cell1{21}{$2$}\cell1{22}{$3$}

\cell0{25}{$3$}
\cell1{25}{$2$}\cell1{26}{$2$}\cell1{27}{$3$}
\end{tikzpicture}
\end{center}
\end{example}

One of the main results in this article, \cref{thm:TAZRP-stat-distn}, states that the stationary probability of a state $w\in\TAZRP(\lambda,n)$ can be computed as the sum over the weights of the pre-image of $w$ in $\Tab(\lambda,n)$ under the map $\proj$.

\subsection{The operators $\{\tau_j\}$}
\label{sec:tau}

Let $\sigma\in\Tab(\lambda,n)$ be a filling.
Suppose two columns $j$ and $j+1$ have equal height, i.e.\
$\lambda_j=\lambda_{j+1}=k$. We define an operator $\tau_j$ which
exchanges contents of certain cells between columns $j$ and $j+1$. 
Write $\sigma(r,j)=a_r$ and $\sigma(r,j+1)=b_r$ for $r=1,\dots, k$. 
Define $r_{\max}$ to be the largest $r\in\{2,\dots,k\}$ with the following property:
either $(a_r, a_{r-1}, b_{r-1})$ and $(b_r, a_{r-1}, b_{r-1})$ are both in $\cQ$,
or both are not in $\cQ$. (That is, we look for the largest value of $r$ 
such that exchanging the entries $a_r$ and $b_r$
makes no difference to whether $\{(r,j), (r-1,j), (r-1, j+1)\}$ is a quinv triple.)
If there is no such $r$, let $r_{\max}=1$. 

Now let the operator $\tau_j$ swap the entries between columns $j$ and $j+1$
in rows $i$ with $r_{\max}\leq i\leq k$; i.e.\ for those values of $i$,
\[
\tau_j(\sigma)(i,j)=\sigma(i,j+1)=b_i,
\,\, 
\tau_j(\sigma)(i,j+1)=\sigma(i,j)=a_i,
\]
while all other entries are the same in $\sigma$ and in $\tau_j(\sigma)$, as in the picture below, where we denote $\ell:=r_{\max}$.

\begin{equation*}
\renewcommand{\arraystretch}{1}  
\setlength{\arraycolsep}{1pt} 
\begin{array}{l|c|c|}
\multicolumn{1}{c}{}&
\multicolumn{2}{c}{\sigma}
\\
\cline{2-3} 
\text{row } k &  a_k & b_k\\
 \cline{2-3} 
\,\,\,\vdots & \vdots & \vdots \\
\cline{2-3}
\text{row } \ell & a_{\ell} & b_{\ell}\\
\cline{2-3} 
\text{row } \ell-1\,\,\,\,\,\, & a_{\ell-1} & b_{\ell-1}\\
 \cline{2-3}
\,\,\,\vdots & \vdots & \vdots \\
 \cline{2-3}
 \multicolumn{1}{c}{}&
\multicolumn{1}{c}{\scriptstyle j}&
\multicolumn{1}{c}{\scriptstyle j+1}
\end{array}
\,\,\xrightarrow{\makebox[1.5cm]{$\tau_j$}}\,\,
\begin{array}{|c|c|}
\multicolumn{2}{c}{\tau_j(\sigma)}
\\
\hline
b_k &  a_k\\
\hline
\vdots & \vdots \\
\hline
b_{\ell} & a_{\ell}\\
\hline
a_{\ell-1} & b_{\ell-1}\\
\hline
\vdots & \vdots \\
\hline
\multicolumn{1}{c}{\scriptstyle j}&
\multicolumn{1}{c}{\scriptstyle j+1}
\end{array}
\end{equation*}

Put another way: we swap the pair in columns $j,j+1$ in row $k$, and iteratively, if we have swapped the pair in columns $j,j+1$ in row $i$ and this made a difference to whether the triple $\{(i,j), (i-1, j), (i-1, j+1)\}$ is a quinv
triple, then we perform the swap in columns $j,j+1$ of row $i-1$ also. 

\begin{example}\label{ex:tau}
Suppose $\sigma$ has columns $j,j+1$ as shown below. Then $k=5$ and $r_{\max}=3$ since both $(2,3,4)\in\cQ$ and $(3,3,4)\in\cQ$. Thus applying the operator $\tau_j$ gives the following. The cells whose content was swapped are shown in grey.

\begin{equation*}
\renewcommand{\arraystretch}{1}  
\setlength{\arraycolsep}{1pt} 
\begin{array}{|c|c|}
\multicolumn{2}{c}{\sigma}
\\
\hline
3 &  4\\
\hline
2 & 3  \\
\hline
2 & 3 \\
\hline
3 & 4\\
\hline
1 & 3 \\
\hline
\multicolumn{1}{c}{\scriptstyle \phantom{+} j\phantom{1}}&
\multicolumn{1}{c}{\scriptstyle j+1}
\end{array}
\,\,\xrightarrow{\makebox[1.5cm]{$\tau_j$}}\,\,
\begin{array}{|c|c|}
\multicolumn{2}{c}{\tau_j(\sigma)}
\\
\hline
\cellcolor{gray!50}4 & \cellcolor{gray!50} 3\\
\hline
\cellcolor{gray!50}3 & \cellcolor{gray!50}2  \\
\hline
\cellcolor{gray!50}3 & \cellcolor{gray!50}2 \\
\hline
3 & 4\\
\hline
1 & 3 \\
\hline
\multicolumn{1}{c}{\scriptstyle \phantom{+} j\phantom{1}}&
\multicolumn{1}{c}{\scriptstyle j+1}
\end{array}
\end{equation*}

\end{example}

The key property of $\tau_j$ that will be important to us is that  
\[
\quinv(\tau_j(\sigma);L)=\begin{cases} \quinv(\tau_j(\sigma);L),& L\neq (\emptyset,(\lambda_j,j),(\lambda_j,j+1)),\\
1- \quinv(\tau_j(\sigma);L),& L= (\emptyset,(\lambda_j,j),(\lambda_j,j+1)).
\end{cases}
\]
In other words, $\quinv(\sigma;L)$ and $\quinv(\tau_j(\sigma);L)$ are equal unless $L$ is the unique degenerate triple in columns $j, j+1$. 
This gives the following lemma.

\begin{lemma}[{\cite[Lemma 7.6]{AMM20}}]\label{lem:tauquinv}
Let $\sigma\in\Tab(\lambda, n)$ and let $j$ be such that the columns $j,j+1$ are of equal height $k$. Let $x=(k,j)$ and $y=(k,j+1)$ be the cells at the tops of columns $j$ and $j+1$. Then
\begin{equation}
\quinv(\tau_j(\sigma))=\quinv(\sigma) + \begin{cases} 1,& \sigma(x)>\sigma(y),\\
-1,&\sigma(x)<\sigma(y),\\
0,&\sigma(x)=\sigma(y). \end{cases}
\end{equation}
\end{lemma}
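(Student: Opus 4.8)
The plan is to track, triple by triple, how the operator $\tau_j$ affects the quinv count, reducing everything to the "key property" stated just before the lemma: $\tau_j$ preserves $\quinv(\sigma;L)$ for every triple $L$ of $\dg(\lambda)$ except possibly the degenerate triple $L_0 = (\emptyset,(\lambda_j,j),(\lambda_j,j+1))$ sitting at the top of columns $j$ and $j+1$. Granting that property, we have
\[
\quinv(\tau_j(\sigma)) - \quinv(\sigma) = \quinv(\tau_j(\sigma);L_0) - \quinv(\sigma;L_0),
\]
so the entire lemma comes down to computing the indicator $\quinv(\cdot;L_0)$ before and after. By \cref{def:quinv}, $L_0$ is a quinv triple in a filling $\rho$ iff $\rho(\lambda_j,j) < \rho(\lambda_j,j+1)$, i.e.\ iff the entry in column $j$ is strictly smaller than the entry in column $j+1$ at the top row. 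Writing $x=(k,j)$, $y=(k,j+1)$ as in the statement (so $k=\lambda_j=\lambda_{j+1}$), the definition of $\tau_j$ forces a swap in the top row whenever $r_{\max}\le k$, which holds in all cases (the swap is always initiated at row $k$); hence $\tau_j(\sigma)(x)=\sigma(y)$ and $\tau_j(\sigma)(y)=\sigma(x)$. So $\quinv(\sigma;L_0) = \one[\sigma(x)<\sigma(y)]$ while $\quinv(\tau_j(\sigma);L_0) = \one[\sigma(y)<\sigma(x)]$, and the difference is $+1$ if $\sigma(x)>\sigma(y)$, $-1$ if $\sigma(x)<\sigma(y)$, and $0$ if $\sigma(x)=\sigma(y)$ (in which case no swap in the top row changes anything). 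This is exactly the claimed formula.

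The one thing I actually need to justify is the "key property" — that $\tau_j$ leaves every non-degenerate triple's quinv-status, and every other degenerate triple's status, unchanged. Since this is attributed to \cite[Lemma 7.6]{AMM20}, in the present paper I would simply cite it; but if a self-contained argument is wanted, here is the shape. First, any triple not involving a cell in columns $j$ or $j+1$ is obviously untouched. Next, $\tau_j$ swaps contents within a contiguous block of rows $r_{\max},\dots,k$ in the two columns, leaving rows $1,\dots,r_{\max}-1$ fixed; so a triple lying entirely in rows $\le r_{\max}-1$ or entirely in rows $\ge r_{\max}$ that uses both columns $j,j+1$ in the same row $r$ is of the form $((r+1,i),(r,i),(r,\cdot))$ or similar, and one checks case by case — using the fact that $\cQ$ is the set $\{(a,b,c): a<b<c \text{ or } b<c<a \text{ or } c<b<a \text{ or } a=b\ne c\}$ and that simultaneously swapping $a\leftrightarrow a'$ in columns $j$ and $b\leftrightarrow b'$ in column $j+1$ is a symmetry — that membership in $\cQ$ is preserved. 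The genuinely delicate triples are the "boundary" ones straddling rows $r_{\max}-1$ and $r_{\max}$, namely $\{(r_{\max},j),(r_{\max}-1,j),(r_{\max}-1,j+1)\}$ and its mirror with columns reversed; but these are handled precisely by the definition of $r_{\max}$, which is chosen as the largest $r$ such that swapping $a_r\leftrightarrow b_r$ does not change whether $\{(r,j),(r-1,j),(r-1,j+1)\}\in\cQ$ — so by construction the quinv-status of that boundary triple is unchanged, and the minimality/maximality choice guarantees the iteration stops exactly where no further discrepancy is created.

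The main obstacle is therefore not in the final bookkeeping (which is a three-line case analysis once the key property is in hand) but in the combinatorial verification that $r_{\max}$ is defined exactly so as to make $\tau_j$ quinv-neutral on all triples except $L_0$ — and since that verification is already carried out in \cite{AMM20}, the cleanest route here is to invoke it directly and spend the proof only on extracting the $\pm 1$ formula from the behavior of the single degenerate triple $L_0$ at the top of the two columns. I would write the proof in that economical form: quote the key property, observe the top-row swap always happens, evaluate $\quinv(\cdot;L_0)$ in both fillings, and conclude.
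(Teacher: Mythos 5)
Your proposal is correct and matches the paper's treatment: the paper gives no independent proof of this lemma (it is imported from \cite[Lemma 7.6]{AMM20}), stating only the key property that $\tau_j$ preserves $\quinv(\cdot;L)$ for every triple except the degenerate triple at the top of columns $j,j+1$, and then noting that "this gives the following lemma" --- which is precisely your computation of the indicator $\quinv(\cdot;L_0)$ before and after the top-row swap. Your additional sketch of why $r_{\max}$ makes $\tau_j$ quinv-neutral on all other triples goes beyond what the paper records, but you rightly defer the full verification to \cite{AMM20}.
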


\section{Dynamics on $\Tab(\lambda,n)$}\label{sec:Markov}

Fix a partition $\lambda$, and let $n\in\NN$. In this section, we introduce a continuous-time Markov chain 
on the set $\Tab(\lambda,n)$ of fillings of $\dg(\lambda)$,
which we will call the \emph{tableau Markov chain} (or tableau process). 
The tableau process will have two key properties:
(i) its projection under the map $\proj$ is the mTAZRP; 
(ii) the stationary probability of the filling $\sigma$ is proportional
to $\wt(\sigma)=x^{\sigma} t^{\quinv(\sigma)}$.

We will start in \cref{sec:markov distinct} with the case 
where $\lambda$ is a strict partition, i.e.\ has all its parts distinct. 
This corresponds to the property that all particles of the mTAZRP 
have distinct types. In that case the process is quite simple to 
describe (and does not require the use of the operators 
$\tau_j$ defined in the previous section);
the case of general $\lambda$ is given in 
\cref{sec:markovgeneral}. 

We begin by making some definitions to prepare for the description of
the Markov chain. 

\begin{defn} Let $u=(r,c)$ be a cell of $\dg(\lambda)$. 
The \emph{lower arm} of $u$, denoted $\arm(u)$, 
consists of all cells to its left in the same row plus all cells to its right in the row below:
\[
\arm(u) := \{(r,j) \in \dg(\lambda)\ :\ j<c\} \cup \{(r-1,j)\in\dg(\lambda)\ :\ j>c\}.
\]
Similarly, we define the \emph{upper arm} of $u$, denoted $\uarm(u)$, to consist of all cells to its left in the row above plus all cells to its right in the same row:
\[
\uarm(u) := \{(r+1,j) \in \dg(\lambda)\ :\ j<c\} \cup \{(r,j)\in\dg(\lambda)\ :\ j>c\}.
\]
See \cref{fig:lowerupperarm} for an illustration of the lower and upper arms. Note that what we call the lower arm is the statistic `arm' in \cite{HHL05}. Observe that $v \in \arm(u)$ if and only if $ u \in \uarm(v)$.
\end{defn}

\begin{figure}[h!]
\centering
 \begin{tikzpicture}[scale=0.5]
 \node at (0,0) {$\tableau{u&u&\ \\\ell&\ell&x&u&u\\\ &\ &\ &\ell&\ell&\ell&\ell}$};
 \end{tikzpicture}
\caption{For the cell $x=(2,3)$, the cells in $\arm(x)$ and $\uarm(x)$ are labeled `$\ell$' and `$u$', respectively.}
\label{fig:lowerupperarm}
\end{figure}

Recall that $\South(u) = (r-1,c)$ is the cell directly below $u$, if it exists, and
that $\North(u)=(r+1,c)$ is the cell directly above $u$, if it exists. We shall use the conventions that if $\South(u)$ (resp.~$\North(u)$) doesn't exist, then $\sigma(\South(u))=\infty$ (resp.~$\sigma(\North(u))=0$). Now we define
\[
\down(\sigma,u):=
\begin{cases} 
-\infty, & \sigma(\South(u))=\sigma(u)\\ 
\#\{y\in\arm(u):\sigma(y)=\sigma(u)\},&\mbox{otherwise}.
\end{cases}
\]
and its analog
\[
\up(\sigma,u):=
\begin{cases} 
-\infty, & \sigma(\North(u))=\sigma(u)\\ 
\#\{y\in\uarm(u):\sigma(y)=\sigma(u)\},&\mbox{otherwise}.
\end{cases}
\]

Let $\lambda$ be a partition, let $u=(r,c)\in\dg(\lambda)$ be a cell, 
and let $\sigma\in\Tab(\lambda, n)$ be a filling of $\lambda$.

We shall consider the maximal vertical chain of cells upwards from $u$
along which the contents increase by $1$ (cyclically mod $n$) at each step.
Denote the cell at the top of the chain by $H_u(\sigma)$. 
Then $H_u(\sigma)=(r+m,c)$, where $r\leq r+m\leq \lambda_c$ such that 
\begin{itemize}
\item[(i)] for all $i$ with $r\leq i < r+m$, we have 
$\sigma(\North(i,c))-\sigma(i,c)\equiv 1 \pmod n$;
\item[(ii)] either $r+m=\lambda_c$, or 
$\sigma(\North(H_u(\sigma)))-\sigma(H_u(\sigma)) \not\equiv 1\pmod n$.
\end{itemize}
Similarly, we shall consider the maximal chain of cell \emph{downwards} from $u$ along with the contents \emph{decrease} by 1 (again, cyclically mod $n$) at each step. Denote the cell at the bottom of the chain by $D_u(\sigma)$. Then if $D_u(\sigma)=(r-m,c)$, where $1\leq r-m\leq r$ such that
\begin{itemize}
\item[(i)] for all $i$ with $r-m< i \leq r$, we have 
$\sigma(i,c)-\sigma(\South(i,c))\equiv 1 \pmod n$;
\item[(ii)] either $r-m=1$, or 
$\sigma(D_u(\sigma))-\sigma(\South(D_u(\sigma))) \not\equiv 1\pmod n$.
\end{itemize}
\begin{defn}\label{defn:ringing-path}
Given $\lambda$ and a cell $u=(r,c)$, we define a \emph{ringing path map} 
$R_u : \Tab(\lambda,n) \to \Tab(\lambda,n)$ as follows.
If $\sigma\in \Tab(\lambda, n)$ with $\sigma(\South(u))=\sigma(u)$, 
then let $R_u(\sigma)=\sigma$. 
Otherwise, the map $R_u$ will
increment by $1$ all the contents in the maximal contiguous increasing 
chain above $u$. Let $H_u(\sigma)=(r+m,c)$. Then for $v\in\dg(\lambda)$,
\[
R_u(\sigma)(v) = \begin{cases}
\sigma(v)+1 \pmod n\qquad\qquad& v=(i,c)\ \mbox{for}\ r \leq i \leq r+m,\\
\sigma(v)& \mbox{otherwise}.
\end{cases}
\]
\end{defn}

See \cref{fig:Ru big} and \cref{eg:Ru map} for examples.

\begin{figure}[h!]
\begin{center}
\begin{tikzpicture}[scale=0.49]
\node at (0,0) {$\sigma=$};
\begin{scope}[shift={(1.25,0)}]
\draw[fill=gray!50,draw opacity=0] (.95,-1.3)--(1.825,-1.3)--(1.825,1.275)--(.95,1.275)--(.95,-1.3);
\end{scope}
\node at (3.5,0) {$\tableau{1&4&3\\2&1&4\\1&4&2&3&2\\1&3&2&4&1\\2&2&1&3&3}$};
\node at (12,0) {$R_{(2,2)}(\sigma)=$};
\begin{scope}[shift={(14.75,0)}]
\draw[fill=gray!50,draw opacity=0] (.95,-1.3)--(1.825,-1.3)--(1.825,1.275)--(.95,1.275)--(.95,-1.3);
\end{scope}
\node at (17,0) {$\tableau{1&4&3\\2&2&4\\1&1&2&3&2\\1&4&2&4&1\\2&2&1&3&3}$};
\end{tikzpicture}
\end{center}
\caption{The map $R_u$ is shown for the cell $u=(2,2)$ for the tableau $\sigma$ with $n=4$. The maximal vertical chain beginning at $u$ with contiguously increasing contents is shown in grey. The cell at the top of the chain is $H_u(\sigma)=(4,2)$, which is precisely the cell $y$ referred to in \cref{lem:g} (i).}\label{fig:Ru big}
\end{figure}

\begin{defn}\label{defn:ringing-path-down}
For a cell $u=(r,c)\in\dg(\lambda)$, we similarly define a \emph{reverse ringing path map} $B_u: \Tab(\lambda,n) \to \Tab(\lambda,n)$ as follows. If $\sigma(\North(u)) = \sigma(u)$, then $B_u(\sigma) = \sigma$. If not, the map $B_u$ will decrement by $1$ all the contents in the maximal contiguous decreasing chain downards from $u$. Let $D_u(\sigma)=(r-m,c)$. Then for $v\in \dg(\lambda)$,
\[
B_u(\sigma)(v) = \begin{cases}
\sigma(v)-1 \pmod n\qquad\qquad& v=(i,c)\ \mbox{for}\ r-m \leq i \leq r,\\
\sigma(v)& \mbox{otherwise}.
\end{cases}
\]
\end{defn}
We will show that the maps 
\[
R,B\colon \dg(\lambda)\times \Tab(\lambda,n)\rightarrow \dg(\lambda)\times \Tab(\lambda,n)
\]
defined by $R(u;\sigma)=(H_u(\sigma),R_u(\sigma))$ and $B(u;\sigma)=(D_u(\sigma),B_u(\sigma))$, are mutually inverse bijections. The following properties are straightforward to check from the definitions, from which it follows that $B(H_u(\sigma),R_u(\sigma))=R(D_u(\sigma),B_u(\sigma))=(u,\sigma)$.

\begin{lemma}\label{lem:g}
\begin{itemize}
\item[(i)]
Take $\xi\in\Tab(\lambda, n)$, and $u=(r,j)\in\dg(\lambda)$ such that $\xi(u)\neq\xi(\South(u))$; define $\sigma:=R_u(\xi)$. 
Then $y=H_u(\xi)$ is the unique cell such that  
$\xi=B_y(\sigma)$,
and has the property that $\sigma(y)\neq\sigma(\North(y))$. Moreover,
\begin{equation}\label{eq:sigma-xi-relation}
x^{\xi}=x^{\sigma}x_{\xi(u)}x^{-1}_{\sigma(y)}.
\end{equation}

\item[(ii)]
Conversely, take $\sigma\in\Tab(\lambda,n)$, and $y\in\dg(\lambda)$ 
such that $\sigma(y)\neq\sigma(\North(y))$; define $\xi:=B_y(\sigma)$. 
Then there is a unique $u$ such that $\sigma=R_u(\xi)$; moreover, $\xi(u)\neq\xi(\South(u))$ and $H_u(\xi)=y$. 
\end{itemize}
\end{lemma}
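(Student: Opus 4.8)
The plan is to prove both parts by directly unwinding the definitions of $R_u$, $R_u^{-1}$ and $h_u$, keeping careful track of exactly which cells change and by how much. The key structural observation is that each of these maps alters a filling only along a single contiguous vertical run of cells within one column, so that checking the claimed identities reduces to matching up the endpoints of two such runs. I would also dispatch at the outset the degenerate case $n=1$ (in which every hypothesis below is vacuous), and then assume $n\ge 2$, so that incrementing or decrementing a content mod $n$ always changes it.

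For part (i): put $h=h_u(\xi)$ and $y=(h,j)$. By definition $\sigma=R_u(\xi)$ agrees with $\xi$ off the run $\{(i,j):r\le i\le h\}$, on which $\sigma(i,j)\equiv\xi(i,j)+1\pmod n$. I would first check $\sigma(y)\ne\sigma(\North(y))$: if $h=\lambda_j$ this is immediate since $\sigma(y)\in[n]$ while $\sigma(\North(y))=0$ by convention, and if $h<\lambda_j$ it follows from clause (ii) of the definition of $h_u$ together with the fact that $\North(y)$ lies above the altered run (so $\sigma(\North(y))=\xi(\North(y))$). Next I would identify the maximal contiguous decreasing (mod $n$) chain downward from $y$ in $\sigma$ as exactly the run $\{(i,j):r\le i\le h\}$: clause (i) of the definition of $h_u$ shows the decreasing condition holds throughout the run so the chain does not stop early, while the hypothesis $\xi(u)\ne\xi(\South(u))$ (with the convention $\sigma(\South(u))=\infty$ when $r=1$) shows it does not extend below row $r$. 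Hence $R_y^{-1}$ decrements precisely this run by $1\pmod n$, undoing $R_u$, so $R_y^{-1}(\sigma)=\xi$. Uniqueness of $y$ follows because, $\sigma$ being distinct from $\xi$, any $y'$ with $R_{y'}^{-1}(\sigma)=\xi$ must act nontrivially and hence change a contiguous vertical run whose top cell is $y'$; matching this with the set of cells on which $\sigma$ and $\xi$ disagree forces $y'=y$.

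The monomial identity \eqref{eq:sigma-xi-relation} is then a short telescoping calculation. Writing $c_i=\xi(i,j)$ for $r\le i\le h$ and setting $c_{h+1}:=c_h+1\pmod n=\sigma(y)$, clause (i) gives $c_{i+1}\equiv c_i+1\pmod n$ for $r\le i<h$, so
\[
\frac{x^{\sigma}}{x^{\xi}}=\prod_{i=r}^{h}\frac{x_{\sigma(i,j)}}{x_{\xi(i,j)}}
=\prod_{i=r}^{h}\frac{x_{c_{i+1}}}{x_{c_i}}=\frac{x_{c_{h+1}}}{x_{c_r}}=\frac{x_{\sigma(y)}}{x_{\xi(u)}},
\]
which rearranges to $x^{\xi}=x^{\sigma}x_{\xi(u)}x_{\sigma(y)}^{-1}$.

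Part (ii) is the same argument run backwards, and I would present it as such. Given $\sigma$ and $y=(r',j)$ with $\sigma(y)\ne\sigma(\North(y))$, let $\{(i,j):r\le i\le r'\}$ be the maximal contiguous decreasing (mod $n$) chain downward from $y$ in $\sigma$ and set $u=(r,j)$; then $\xi=R_y^{-1}(\sigma)$ decrements exactly this run by $1$. Maximality of the chain at its bottom yields $\xi(u)\ne\xi(\South(u))$, and reading clauses (i) and (ii) of the definition of $h_u$ off the decreasing chain shows that the increasing chain upward from $u$ in $\xi$ terminates exactly at row $r'$, i.e.\ $h_u(\xi)=r'$; consequently $R_u(\xi)$ re-increments this run and returns $\sigma$. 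Uniqueness of $u$ is exactly as in part (i): any $u'$ with $R_{u'}(\xi)=\sigma\ne\xi$ acts nontrivially and changes a contiguous vertical run with bottom cell $u'$, which must coincide with the run on which $\xi$ and $\sigma$ differ. I do not expect any real obstacle here — the argument is pure bookkeeping — and the only points needing care are the consistent treatment of cyclic arithmetic mod $n$ (including the wrap $n\mapsto 1$), the boundary conventions $\sigma(\South(\cdot))=\infty$ and $\sigma(\North(\cdot))=0$ at the ends of a column, and phrasing the telescoping product so the cancellation is manifest.
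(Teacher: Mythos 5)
Your proof is correct. The paper offers no argument for this lemma at all --- it is introduced with ``The following properties are straightforward to check from the definitions'' --- and your careful unwinding (identifying the altered cells as a single contiguous vertical run, matching its endpoints under $R_u$ and $R_y^{-1}$, and telescoping the monomial ratio) is precisely the routine verification the authors are eliding. One immaterial quibble: for $n=1$ the hypotheses are not literally vacuous (the bottom cell of a column still satisfies $\xi(u)\neq\xi(\South(u))=\infty$, and uniqueness of $y$ genuinely degenerates since $R_u$ acts trivially), so the honest statement is that the lemma implicitly assumes $n\ge 2$, which you correctly do for the substance of the argument.
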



\begin{example}
\label{eg:Ru map}
Let $\sigma\in\Tab((3,2,1),3)$ be the tableau\begin{center}
\begin{tikzpicture}[scale=0.5]
\node at (0,0) {\tableau{1\\1&3\\3&2&3}};
\end{tikzpicture}
\end{center}
corresponding to the mTAZRP state $\proj(\sigma) = (\cdot | 2 | 31)$. 
Note that 
$R_{(3,1)}(\sigma) =B_{(2,1)}(\sigma)= \sigma$.
Below, we show the map $R$ for the cells $(1,1)$ and $(2,2)$, as well as the inverse map $B$ for the cells $(3,1)$ and $(2,2)$, with the corresponding maximal chain highlighted.
\begin{center}
\begin{tikzpicture}[scale=0.49]
\begin{scope}[shift={(-1.7,0)}]
\draw[fill=gray!50,draw opacity=0] (.9,-1.3)--(1.8,-1.3)--(1.8,-.4)--(.9,-.4)--(.9,-1.3);
\end{scope}
\begin{scope}[shift={(-1.7,0+.85)}]
\draw[fill=gray!50,draw opacity=0] (.9,-1.3)--(1.8,-1.3)--(1.8,-.4)--(.9,-.4)--(.9,-1.3);
\end{scope}
\begin{scope}[shift={(8.5-.85,0+.85)}]
\draw[fill=gray!50,draw opacity=0] (.9,-1.3)--(1.8,-1.3)--(1.8,-.4)--(.9,-.4)--(.9,-1.3);
\end{scope}
\begin{scope}[shift={(17-1.7,+1.7)}]
\draw[fill=gray!50,draw opacity=0] (.9,-1.3)--(1.8,-1.3)--(1.8,-.4)--(.9,-.4)--(.9,-1.3);
\end{scope}
\begin{scope}[shift={(26-.85,0+.85)}]
\draw[fill=gray!50,draw opacity=0] (.9,-1.3)--(1.8,-1.3)--(1.8,-.4)--(.9,-.4)--(.9,-1.3);
\end{scope}
\begin{scope}[shift={(26-.85,0)}]
\draw[fill=gray!50,draw opacity=0] (.9,-1.3)--(1.8,-1.3)--(1.8,-.4)--(.9,-.4)--(.9,-1.3);
\end{scope}
\node at (-3.5,0) {$R_{(1,1)}(\sigma) =$};
\node at (.5,0) {\tableau{1\\2&3\\1&2&3}};
\node at (-2,-2) {rate $x_3^{-1}$};
\node at (-2,-3.2) {state $(3|2|1)$};
\node at (5,0) {$R_{(2,2)}(\sigma) =$};
\node at (9,0) {\tableau{1\\1&1\\3&2&3}};
\node at (6.5,-2) {rate $x_2^{-1}$};
\node at (6.5,-3.2) {state $(\cdot|2|31)$};
\node at (13.5,0) {$B_{(3,1)}(\sigma) =$};
\node at (17.5,0) {\tableau{3\\1&3\\3&2&3}};
\node at (15,-2) {$u=(3,1)$};
\node at (15,-3) {state $(\cdot|2|31)$};
\node at (22.5,0) {$B_{(2,2)}(\sigma) =$};
\node at (26.5,0) {\tableau{1\\1&2\\3&1&3}};
\node at (24,-2) {$u=(1,2)$};
\node at (24,-3) {state $(2|\cdot|31)$};
\end{tikzpicture}
\end{center}
The inverses of the above maps are given by:
\[
B_{(2,1)}\circ R_{(1,1)}(\sigma)=B_{(2,2)}\circ R_{(2,2)}(\sigma)=R_{(3,1)}\circ B_{(3,1)}(\sigma)=R_{(2,2)}\circ B_{(2,2)}(\sigma)=\sigma.
\]
\end{example}

The next few results will be the main ingredients in our proofs. The proofs are combinatorial in nature but rather technical, so we postpone them to \cref{sec:proofs}.

 The first lemma concerns the fact that for a given filling $\sigma$ and for each fixed value $k$, the multiset of values of the $\down$ statistic is equal to the multiset of values of the $\up$ statistic, over all cells with content $k$ in $\sigma$. 
  
\begin{lemma}
\label{lem:updown}
For all $\sigma\in\Tab(\lambda)$ and all $k$, we have 
\begin{equation}\label{eq:D=U}
\sum_{\substack{u \in \dg(\lambda)\\\sigma(u)=k\neq\sigma(\North(u))}} t^{\up(\sigma,u)}=\sum_{\substack{u \in \dg(\lambda)\\\sigma(u)=k\neq\sigma(\South(u))}} t^{\down(\sigma,u)}.
\end{equation}
\end{lemma}

The second main ingredient in our proofs is a relation for the change in the number of $\quinv$ triples occurring under the maps $R_u$. Recall the \emph{Kronecker delta} $\delta_{i,j}$ which equals 1 if $i=j$ and $0$ otherwise.

\begin{lemma}
\label{lem:quinvdiff}
Fix a partition $\lambda$ and $n\in\NN$. For $\xi\in\Tab(\lambda,n)$, let $u=(r,j)\in\dg(\lambda)$ with $\xi(\South(u))\neq \xi(u)$, let $\sigma = R_u(\xi)$, and let $y=H_u(\xi)=(h_u(\xi),j)$. 
Then
\begin{multline}\label{eq:quinvdiff}
\quinv(\xi)-\quinv(\sigma) = \up(\sigma,y)-\down(\xi,u) \\
+ \delta_{\lambda_j,h_u(\xi)} \delta_{\xi(y),n}
\Big(\#\left\{j'<j:\lambda_{j'}=\lambda_j \right\}-\#\left\{j''>j:\lambda_{j''}=\lambda_j \right\}\Big).
\end{multline}
In particular, when $y$ is not part of a degenerate triple or when $\xi(y)\neq n$, we have 
\begin{equation}\label{eq:quinvdistinct}
\quinv(\xi)-\quinv(\sigma) =  \up(\sigma,y)-\down(\xi,u).
\end{equation}
Note that \eqref{eq:quinvdistinct} always holds when $\lambda$ is strict.
\end{lemma}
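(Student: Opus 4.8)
The plan is to track, triple by triple, the change in quinv status as we pass from $\xi$ to $\sigma = R_u(\xi)$. Recall that $R_u$ increments (cyclically mod $n$) the contents of the cells $(i,j)$ for $r \le i \le h_u(\xi)$ along the maximal contiguous increasing chain above $u$, and leaves everything else fixed. So a triple can change its quinv status only if at least one of its three cells lies in this chain $C = \{(i,j) : r \le i \le h := h_u(\xi)\}$. I would organize the analysis by which cells of a triple $L$ meet $C$, splitting into: (a) triples contained entirely within column $j$ (i.e.\ with cells $(i+1,j), (i,j)$), (b) triples of the form $((i+1,i'),(i,i'),(i,j))$ or $((i+1,j),(i,j),(i,j'))$ with the chain cell being the ``$c$''-entry (same row, different column, to the right), (c) triples where a chain cell is the ``$b$''-entry (i.e.\ $(i,j)$ is the middle/bottom-left cell with $i$ in the range $[r,h]$, and there is a cell $(i,j'')$ to its right with $j''>j$, together with $(i+1,j)$), and the degenerate analogues of (b) and (c). The interior triples of type (a) never change status: for $r \le i < h$ the pair $(\sigma(i+1,j),\sigma(i,j))$ either both get incremented or, at the very top when $i+1 > h$, only $(i,j)$ stays and $(i+1,j)$ is unchanged — but there the relevant comparison is governed by $h$ being the top of the chain, and one checks cyclic incrementing preserves the counterclockwise/tie-break condition; the one genuinely delicate interior spot is the bottom, $i = r-1$: the triple $((r,j),(r-1,j),\cdot)$ — but $\South(u)=(r-1,j)$ has content $\ne \xi(u)$ by hypothesis, and here $(r,j)$ is incremented while $(r-1,j)$ is not, which feeds into the boundary bookkeeping rather than the interior.

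The heart of the computation is the ``horizontal'' contribution. For each cell $(i,j) \in C$ with $r \le i \le h$, incrementing $\sigma(i,j) = \xi(i,j)$ changes the quinv status of the triple that has $(i,j)$ as its right-hand entry (namely $((i,j'),(i-1,j'),(i,j))$ type, or the degenerate $( \emptyset, (i,j'),(i,j))$, for each $j' < j$ in the appropriate row) and the triple that has $(i,j)$ as its left/bottom entry (namely $((i+1,j),(i,j),(i,j''))$ for $j'' > j$). A standard counting argument — exactly the one underlying Lemma 7.6 of \cite{AMM20} and the $\up/\down$ statistics — shows that summing these changes over the whole chain telescopes: the net change from the cells serving as ``$c$''-entries assembles into $+\up(\sigma,y)$ (with $y = (h,j)$ the top of the chain, which by \cref{lem:g}(i) is the unique cell with $\sigma(y) \ne \sigma(\North(y))$), while the net change from the cells serving as ``$b$''-entries assembles into $-\down(\xi,u)$ (with $u = (r,j)$ the bottom, where $\xi(\South(u)) \ne \xi(u)$). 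Concretely: a triple $((i,j'),(i-1,j'),(i,j))$ with $j'<j$ flips in or out of $\cQ$ exactly according to whether $\sigma(i,j')$ equals the old vs.\ new content of $(i,j)$, and careful case analysis on the cyclic increment (the only value that ``wraps'' is $n \mapsto 1$) shows these flips are counted by the $\up$-statistic at $y$ minus the $\down$-statistic at $u$, up to the wrap-around correction.

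That wrap-around correction is exactly the second line of \eqref{eq:quinvdiff}, and pinning it down is the main obstacle. When $h = \lambda_j$ (the chain reaches the top of its column) and $\xi(y) = n$ (so $y$'s content wraps from $n$ to $1$ under $R_u$), the top cell $y=(h,j)$ participates in degenerate triples $(\emptyset, (h,j), (h,j''))$ with $j'' $ ranging over the other columns of full height $\lambda_j$, and also as the ``$c$'' entry in degenerate triples $(\emptyset,(h,j'),(h,j))$ with $j' < j$, $\lambda_{j'} = \lambda_j$. Changing $\xi(y)=n$ to $\sigma(y)=$ wait — I have the direction backwards: in $\sigma$ the content is $\xi(u)$-incremented data and $\xi = R_y^{-1}(\sigma)$, so $\xi(y) = n$ means $\sigma(y) = 1$ after the increment; the degenerate triple $(\emptyset, (h,j'), (h,j))$ is quinv iff its bottom-left entry is strictly less than its right entry, and flipping between right-entry $= n$ (never beaten) and right-entry $= 1$ (beats nothing, loses all ties by reading order) toggles every such degenerate triple. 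Counting: for $j' < j$ with $\lambda_{j'} = \lambda_j$ we lose/gain one quinv each (contributing $\#\{j' < j : \lambda_{j'} = \lambda_j\}$), and for $j'' > j$ with $\lambda_{j''} = \lambda_j$, where $y$ is the left entry, we get the opposite sign ($-\#\{j'' > j : \lambda_{j''} = \lambda_j\}$), which is precisely the parenthesized expression times $\delta_{\lambda_j, h_u(\xi)}\delta_{\xi(y),n}$. I would verify the sign and the reading-order tie-breaking here with a small explicit check. Finally, \eqref{eq:quinvdistinct} is immediate since the correction term vanishes unless $y$ tops a full-height column (hence is part of a degenerate triple) \emph{and} $\xi(y) = n$; and when $\lambda$ is strict there are no two columns of equal height, so no degenerate triples of the relevant kind exist and $\delta_{\lambda_j,h_u(\xi)}$ forces the term to zero regardless of $\xi(y)$.
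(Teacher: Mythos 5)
Your proposal follows essentially the same route as the paper's proof: only triples meeting the incremented chain can change status, the triples whose two column-$j$ cells both lie in the interior of the chain are inert, the contributions where a chain cell is the rightmost entry (and the two boundary triples where it is the $a$/$b$ entry) telescope into $\up(\sigma,y)-\down(\xi,u)$, and the degenerate triples in the top row when $h_u(\xi)=\lambda_j$ and $\xi(y)=n$ produce exactly the signed count in the correction term. The one spot you flagged for checking is the only one needing care: a degenerate triple $(\emptyset,(h,j'),(h,j))$ with $\xi(h,j')=n$ does not actually toggle, but the deficit is absorbed by the ordinary telescoping term for the top row, so your final count $\#\{j'<j:\lambda_{j'}=\lambda_j\}-\#\{j''>j:\lambda_{j''}=\lambda_j\}$ is correct.
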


\subsection{Tableau process for the case of a strict partition $\lambda$}
\label{sec:markov distinct}

We now define the tableau Markov chain with state space $\Tab(\lambda,n)$ for the case where $\lambda$ is a strict partition (i.e.~has no repeated parts) in the following way. Let $\sigma\in\Tab(\lambda,n)$. 
We attach an exponential clock with rate $t^{\down(\sigma,u)}x_{\sigma(u)}^{-1}$ to every cell $u\in\dg(\lambda)$ such that $\sigma(\South(u))\neq \sigma(u)$. When this clock rings, we make a transition to $R_u(\sigma)$.
We say that such a transition is \emph{triggered by the cell $u$}. 
For $\sigma,\xi\in\Tab(\lambda,n)$, denote by $\rate(\sigma,\xi)$ the rate of the transition from $\sigma$ to $\xi$.

We will later show in a more general setting that this process is 
irreducible (see \cref{prop:tab process irred}).
Assuming that for now, we have the following.

\begin{thm}
\label{thm:maintheorem0}
The stationary distribution of the tableau Markov chain defined above is proportional to 
\[
\wt(\sigma) = t^{\quinv(\sigma)}x^{\sigma}.
\]
\end{thm}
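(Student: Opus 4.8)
The plan is to verify the detailed-balance-type equations for the continuous-time Markov chain, or more precisely the stationary (master) equations
\[
\sum_{\xi} \wt(\xi)\,\rate(\xi,\sigma) \;=\; \wt(\sigma)\sum_{\xi}\rate(\sigma,\xi)
\]
for each fixed $\sigma\in\Tab(\lambda,n)$, since the chain is irreducible (granted by \cref{prop:tab process irred}) and hence has a unique stationary distribution up to scaling. The right-hand side is easy: the total outflow rate from $\sigma$ is $\wt(\sigma)\sum_{u:\sigma(\South(u))\neq\sigma(u)} t^{\down(\sigma,u)}x_{\sigma(u)}^{-1}$, and since for a strict partition there are no degenerate triples, \cref{lem:quinvdiff} (in the form \eqref{eq:quinvdistinct}) will control how $\wt$ changes along each transition on the left-hand side.

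First I would parametrize the inflow. By \cref{lem:g}(i)--(ii), the transitions into $\sigma$ are exactly the maps $\xi\mapsto\sigma=R_u(\xi)$, and these are in bijection (via $\xi=R_y^{-1}(\sigma)$) with the cells $y$ such that $\sigma(y)\neq\sigma(\North(y))$; for such a pair, $\rate(\xi,\sigma)=t^{\down(\xi,u)}x_{\xi(u)}^{-1}$ where $u$ is the bottom of the chain and $h_u(\xi)$ is the row of $y$. Using \eqref{eq:sigma-xi-relation}, $x^{\xi}=x^{\sigma}x_{\xi(u)}x_{\sigma(y)}^{-1}$, and using \eqref{eq:quinvdistinct}, $t^{\quinv(\xi)}=t^{\quinv(\sigma)}t^{\up(\sigma,y)-\down(\xi,u)}$. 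Multiplying, the contribution of this transition to the inflow is
\[
\wt(\xi)\,\rate(\xi,\sigma)
= \wt(\sigma)\, x_{\xi(u)}x_{\sigma(y)}^{-1}\,t^{\up(\sigma,y)-\down(\xi,u)}\cdot t^{\down(\xi,u)}x_{\xi(u)}^{-1}
= \wt(\sigma)\, x_{\sigma(y)}^{-1}\, t^{\up(\sigma,y)},
\]
so the troublesome quantities $\down(\xi,u)$ and $\xi(u)$ cancel cleanly, leaving an expression depending only on $\sigma$ and $y$. Summing over all admissible cells $y$, the total inflow is $\wt(\sigma)\sum_{y:\sigma(y)\neq\sigma(\North(y))} t^{\up(\sigma,y)}x_{\sigma(y)}^{-1} = \wt(\sigma)\sum_k U(\sigma,k)x_k^{-1}$.

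It then remains to match this against the outflow $\wt(\sigma)\sum_{u:\sigma(\South(u))\neq\sigma(u)} t^{\down(\sigma,u)}x_{\sigma(u)}^{-1} = \wt(\sigma)\sum_k D(\sigma,k)x_k^{-1}$. This is precisely where \cref{lem:updown} enters: $U(\sigma,k)=D(\sigma,k)$ for every $k$, so the two sums agree term by term in each $x_k^{-1}$, and the stationary equation holds. I expect the main (only real) obstacle to be bookkeeping: making sure the bijection of \cref{lem:g} is used with the correct roles of $u$ and $y$, and that cells with $\sigma(\South(u))=\sigma(u)$ (which trigger no transition, i.e.\ $R_u(\sigma)=\sigma$, contributing a $\down$ value of $-\infty$ hence $t^{-\infty}=0$) are consistently excluded on both sides — the convention $t^{-\infty}=0$ makes the sums over "all cells" and over "admissible cells" agree, so no separate argument is needed. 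The genuine content is entirely carried by Lemmas \ref{lem:updown} and \ref{lem:quinvdiff}; the proof here is just their assembly.
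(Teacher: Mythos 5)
Your proposal is correct and follows essentially the same route as the paper: both verify the global balance equation by parametrizing the inflow via \cref{lem:g}, cancelling the $\down(\xi,u)$ and $x_{\xi(u)}$ factors using \eqref{eq:sigma-xi-relation} and \eqref{eq:quinvdistinct}, and then matching inflow against outflow termwise in $x_k^{-1}$ via \cref{lem:updown}. No substantive differences.
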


\begin{proof}[Proof of \cref{thm:maintheorem0}]

Denote all states reached from $\sigma$ in one step by $\Out(\sigma)$, and all states that reach $\sigma$ in one step by $\In(\sigma)$:
\begin{align*}
\Out(\sigma) &= \{R_u(\sigma)\ :\ u\in\dg(\lambda),\ \sigma(\South(u))\neq\sigma(u)\},\\
\In(\sigma)& = \{\xi\in\Tab(\lambda,n)\ :\ \sigma\in \Out(\xi)\}.
\end{align*}
From \cref{lem:g}(ii) we have that
$\In(\sigma)=\{B_y(\sigma)\ :\ y\in\dg(\lambda),\ \sigma(\North(y))\neq \sigma(y)\}$. 
Then it suffices to show
\begin{equation}\label{eq:balance}
\wt(\sigma) \sum_{\nu\in \Out(\sigma)} \rate(\sigma,\nu) = \sum_{\xi\in \In(\sigma)} \wt(\xi)\rate(\xi,\sigma),
\end{equation}
which we can expand as
\begin{equation}\label{eq:balance2}
x^{\sigma}t^{\quinv(\sigma)} \sum_{\substack{u\in\dg(\lambda)\\\sigma(\South(u))\neq\sigma(u)}} x^{-1}_{\sigma(u)}t^{\down(\sigma,u)} = \sum_{\substack{y\in\dg(\lambda)\\\sigma(\North(y))\neq\sigma(y)}} \wt(B_y(\sigma))\rate(B_y(\sigma),\sigma).
\end{equation}

To treat the right-hand side of \eqref{eq:balance2}, 
let $\xi=B_y(\sigma)$ for some $y\in\dg(\lambda)$ such that 
$\sigma(\North(y))\neq\sigma(y)$, 
and let $u\in\dg(\lambda)$ be such that $R_u(\xi)=\sigma$. 
From \eqref{eq:sigma-xi-relation}, we have
$x^{\xi}=x^{\sigma}x^{-1}_{\sigma(y)}x_{\xi(u)}$. By \cref{lem:quinvdiff}, we 
also have 
\begin{equation}
\wt(\xi)\rate(\xi,\sigma) = x^{\xi}t^{\quinv(\xi)}\cdot x^{-1}_{\xi(u)}t^{\down(\xi,u)} =x^{\sigma}x^{-1}_{\sigma(y)}t^{\quinv(\sigma)+\up(\sigma,y)}.\label{eq:wtxi}
\end{equation}
Refining over the coefficients of $x$-monomials and plugging in
\eqref{eq:wtxi},
we get that  \eqref{eq:balance2}  is equivalent to 
\begin{align}
x^{\sigma}t^{\quinv(\sigma)} \sum_{k=1}^n x^{-1}_{k} \sum_{\substack{u\in\dg(\lambda)\\\sigma(\South(u))\neq\sigma(u)=k}} t^{\down(\sigma,u)} &= \sum_{k=1}^n \sum_{\substack{y\in\dg(\lambda)\\\sigma(\North(y))\neq\sigma(y)=k}} x^{\sigma}x^{-1}_{k}t^{\quinv(\sigma)+\up(\sigma,y)}\nonumber\\
&=x^{\sigma}t^{\quinv(\sigma)}\sum_{k=1}^n x^{-1}_{k} \sum_{\substack{y\in\dg(\lambda)\\\sigma(\North(y))\neq\sigma(y)=k}} t^{\up(\sigma,y)}.\label{eq:balancewt}
\end{align}
By \cref{lem:updown}, for each $k$ we have
\begin{equation*}
\sum_{\substack{u\in \dg(\lambda)\\ \sigma(\South(u))\neq\sigma(u)=k}} t^{\down(\sigma,u)} = \sum_{\substack{y\in \dg(\lambda)\\ \sigma(\North(y))\neq\sigma(y)=k}} t^{\up(\sigma,y)},
\end{equation*}
from which we obtain \eqref{eq:balancewt} as required.
\end{proof}

\subsection{Tableau process for general $\lambda$}\label{sec:markovgeneral}

Fix a partition $\lambda$ and a positive integer $n$. We generalize the  tableau Markov chain defined in \cref{sec:markov distinct} to the case when $\lambda$ is a general partition. 

If $\lambda$ has repeated parts, some degenerate quinv triples in a filling of $\dg(\lambda)$ may be lost or created after a ringing path transition, which proves to be a significant hurdle in our definition of the tableau process, as the necessary \eqref{eq:wtxi} would no longer hold. 
The contribution of degenerate triples is reflected in the last term of \eqref{eq:quinvdiff}.  Our strategy for treating this is to apply entry-swapping operators $\{\tau_j\}$ to the tableaux (defined in \cref{sec:tau}) in order to 
cancel the change in quinv arising from these degenerate triples. 

To focus our attention on the degenerate triples of $\dg(\lambda)$ within a given row $r$, we define the \emph{denerate segment} in row $r$ to be the set of cells in that row that have no cells above them; namely, they are the cells $(\lambda_j,j)$ where $\lambda_j=r$ (if any exist). In the rest of this article, we will be only be making use of the degenerate segment containing a certain designated cell that is determined by the filling; thus, we will be referring to it simply as the degenerate segment of the filling without explicitly specifying its row.

Let $\xi\in\Tab(\lambda,n)$, and let $u\in\dg(\lambda)$ be a cell that triggers the jump from $\xi$ to $\sigma=R_u(\xi)$. Let $y$ be the cell defined by $\xi=B_y(\sigma)$. If $\xi(y) \neq n$ or if $y$ is not part of a degenerate triple, the transition from $\xi$ triggered by $u$ is to $\sigma=R_u(\xi)$, as in the previous section. 

If $\xi(y)=n$, then $\sigma(y)=1$. If $y=(\lambda_j,j)$ is part of a degenerate triple, we must deal with the change in $\quinv$ that arises from the degenerate triples that involve $y$. In particular, if $y'=(\lambda_j,j')$ for $j'<j$ where $\lambda_{j'}=\lambda_j$, then
we have
\[
\quinv(\xi; (\emptyset,y',y))=\begin{cases}
1&\text{if}\ \xi(y')\neq n,\\
0&\text{if}\ \xi(y')=n,
\end{cases}\quad\text{whereas}\quad
\quinv(\sigma; (\emptyset,y',y))=0.
\] 
Similarly, if $y''=(\lambda_j,j'')$ for some $j''>j$ (so that $\lambda_{j''}=\lambda_j$), 
we have
\[
\quinv(\xi; (\emptyset,y,y''))=0,\quad\text{whereas}\quad
\quinv(\sigma; (\emptyset,y,y''))=\begin{cases}
1&\text{if}\ \xi(y'')\neq 1,\\
0&\text{if}\ \xi(y'')=1.
\end{cases}
\] 

We will correct for this by reflecting the content of the cell $y$ to the other side of its degenerate segment; namely, if $y$ is the $j$'th cell in a degenerate
segment of length $k$, the content of $y$ in $\sigma$ will be sent to the $k-j+1$'st cell in the degenerate segment. In order to not alter the number of non-degenerate $\quinv$ triples in $\sigma$, we will ``move'' the content of cell $y$ by applying the operators $\{\tau_i\}$ in a particular order. 

\begin{defn}
\label{def:tauy}
Let $\xi\in\Tab(\lambda,n)$, $u\in\dg(\lambda)$ with $\xi(\South(u))\neq \xi(u)$, and $\sigma=R_u(\xi)$; let $y=(r,v)$ be such that $\xi=B_y(\sigma)$. If $\xi(y)\neq n$ or $r<\lambda_v$, set $v'=v$. Otherwise, if $r=\lambda_v$, then $y$ is contained in a degenerate segment; suppose this degenerate segment consists of the cells $\{(r,s),(r,s+1),\ldots,(r,s+k)\}$ (accordingly, $s \leq v \leq s+k$). Then we set
\[
v' = 2s+k-v,
\]
as in the schematic diagram in \cref{fig:taushift}.
We shall send the content of the cell $y=(r,v)$ to the cell $y'=(r,v')$ by applying the sequence of operators $\tau_y$ defined as:
\[
\tau_y = \begin{cases} \tau_{v'-1}\circ\cdots\circ\tau_{v+1}\circ\tau_{v},& v<v',\\
id,&v=v'\\
\tau_{v'}\circ\cdots\circ\tau_{v-2}\circ\tau_{v-1},& v>v'.
\end{cases}
\]
Then define $R'_u(\xi)=\tau_y\circ R_u(\xi)$.
\end{defn}

\begin{figure}[!h]
\centering
\scalebox{1.4}{
\begin{tikzpicture}[scale=0.5]
\draw (-4.5-.6,-2)--(4,-2)--(4,0)--(3.5,0)--(3.5,.4)--(3,.4)--(3,.8)--(-2-.4,.8)--(-2-.4,1.2)--(-2.5-.6,1.2)--(-2.5-.6,1.6)--(-3-.6,1.6)--(-3-.6,2)--(-4.5-.6,2)--(-4.5-.6,-2);
\node at (-6-1.5,-.5) { \scriptsize $\dg(\lambda)=$};
\node at (-5.8,0.6) {\scriptsize $r$};
\node at (-2.25,-3) {\scriptsize $s$};
\node at (-.25,-3) {\scriptsize $v$};
\node at (0.75,-2.9) {\scriptsize $v'$};
\node at (2.75,-3) {\scriptsize $s+k$};

\draw[dashed,thick,draw=gray!60] (2.75,-2.5)--(2.75,.3);
\draw[dashed,thick,draw=gray!60] (.75,-2.5)--(.75,.3);
\draw[dashed,thick,draw=gray!60] (-2.25,-2.5)--(-2.25,.3);
\draw[dashed,thick,draw=gray!60] (-.25,-2.5)--(-.25,.3);
\draw[dashed,thick,draw=gray!60] (-5.5,.6)--(3,.6);
\draw[fill=white] (2.6,.8)--(2.6,.4)--(3,.4)--(3,.8)--(2.6,.8);
\draw[draw,fill=gray!50] (2.6-2,.8)--(2.6-2,.4)--(3-2,.4)--(3-2,.8)--(2.6-2,.8);
\draw[draw,fill=gray!50] (2.6-3,.8)--(2.6-3,.4)--(3-3,.4)--(3-3,.8)--(2.6-3,.8);
\draw[fill=white] (2.6-5,.8)--(2.6-5,.4)--(3-5,.4)--(3-5,.8)--(2.6-5,.8);
\draw[draw,fill=gray!50] (2.6-3,.8-1.5)--(2.6-3,.4-1.5)--(3-3,.4-1.5)--(3-3,.8-1.5)--(2.6-3,.8-1.5);
\draw[draw,fill=white] (2.6-3,.8-1.5)--(2.6-3,.4)--(3-3,.4)--(3-3,.8-1.5)--(2.6-3,.8-1.5);
\node at (-.65,.2) {\tiny $y$};
\node at (1.2,.2) {\tiny $y'$};
\node at (-.65,-1.2) {\tiny $u$};
\draw [decorate,
    decoration = {calligraphic brace,amplitude=5pt}] (1.1,1) --  (3,1);
    \draw [decorate,
    decoration = {calligraphic brace,amplitude=5pt}] (-2.4,1) --  (-0.5,1);
\node at (2.1,1.7) {\tiny $v-s$};
\node at (-1.4,1.7) {\tiny $v-s$};

\end{tikzpicture}
}
\caption{Let $\xi,u,\sigma=R_u(\xi)$, $y=(r,v)$, $s$, and $k$ be defined as above with $\xi(y)=n$ and thus $\sigma(y)=1$. We show schematically the cell $y'=(r,v')$ in relation to $y$ in $\dg(\lambda)$, in the case where $v<v'$ (left).}\label{fig:taushift}
\end{figure}

We now define the continuous-time Markov process on $\Tab(\lambda,n)$ for general $\lambda$ as follows. Let $\sigma\in\Tab(\lambda,n)$. 
We attach an exponential clock with rate $t^{\down(\sigma,u)}x_{\sigma(u)}^{-1}$ to every cell $u\in\dg(\lambda)$ such that $\sigma(\South(u))\neq \sigma(u)$. When this clock rings, we make a transition to $R'_u(\sigma)$.

See \cref{eg:R' transition} for an example of a new transition.

\begin{example}
\label{eg:R' transition}
Let $\lambda=(3,2,2,2,2,2,1)$ and $n=3$. Suppose 
\[
\xi=\raisebox{-12pt}{\begin{tikzpicture}[scale=0.5]
\node at (0.05,1.25) {\tableau{\\&&&&&\\&&\darkgreysq&&&&}};
\node at (0.05,1.25) {\tableau{2\\3&1&3&1&2&3\\3&1&2&2&1&1&2}};\end{tikzpicture}},
\] and let $u=(1,3)$ (shaded grey). Then 
\[
\sigma=R_u(\xi)=\raisebox{-12pt}{\begin{tikzpicture}[scale=0.5]
\node at (0.05,1.25) {\tableau{\\&&\darkgreysq&&\darkgreysq&\\&&&&&&}};
\node at (0.05,1.25) {\tableau{2\\3&1&\pmb{\bm{1}}&1&2&3\\2&1&\pmb{\bm{3}}&2&1&1&2}};\end{tikzpicture}},
\]
where the cells $y$ and $y'$ are shaded grey and the entries in the ringing path are bolded.
Since the cell $y=(2,3)$ is contained in a degenerate segment and $\xi(y)=n$ so that $\sigma(y)=1$, we need to apply $\tau_y$. Using the notation above, $s=2$, $k=4$, $v=3$, and so $v'=2s+k-v=5$ and $y'=(2,5)$. Thus $\tau_y=\tau_4\tau_3$. We then get
\[
\sigma'=R'_u(\xi)=\tau_4\tau_3 \left(\raisebox{12pt}{\tableau{2\\3&1&\pmb{\bm{1}}&1&2&3\\2&1&3&2&1&1&2}} \right) = \tau_4\left(\raisebox{12pt}{\tableau{2\\3&1&1&\pmb{\bm{1}}&2&3\\2&1&3&2&1&1&2}} \right) = \raisebox{12pt}{\tableau{2\\3&1&1&2&\pmb{\bm{1}}&3\\2&1&3&1&2&1&2}}.
\]
(The content of cell $y$ in $\sigma$ which is being moved by $\tau_y$ is indicated in bold.) Computing $\quinv(\xi)=12$, $\quinv(\sigma')=13$, $\down(\xi,u)=1$, and $\up(\sigma',y')=0$, we verify that $\quinv(\xi)-\quinv(\sigma') = \up(\sigma',y')-\down(\xi,u)$. 
\end{example}

\begin{prop}
\label{prop:tab process irred}
The tableau Markov chain on $\Tab(\lambda,n)$ is irreducible.
\end{prop}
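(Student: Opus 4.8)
The plan is to reduce irreducibility to a single reachability claim and then settle that claim by an explicit construction.

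\emph{Reduction.} The weight $\mu(\sigma)=\wt(\sigma)=x^{\sigma}t^{\quinv(\sigma)}$ is a stationary measure for the tableau process: for strict $\lambda$ this is precisely the balance relation \eqref{eq:balance}, and the analogous identity holds for general $\lambda$; neither proof uses irreducibility. Taking $t$ and $x_1,\dots,x_n$ to be positive reals we have $\mu(\sigma)>0$ for every $\sigma$. Since a finite continuous-time Markov chain gives stationary mass $0$ to every transient state, the positivity of (the normalization of) $\mu$ forces every state of $\Tab(\lambda,n)$ to be recurrent, so $\Tab(\lambda,n)$ is a disjoint union of closed communicating classes. It therefore suffices to exhibit a single filling $\rho$ that is reachable, by a sequence of transitions, from every filling: if $\rho$ lies in the closed class $C$ and $\sigma$ is arbitrary, then $\sigma$ lies in some closed class $C_\sigma$, and a directed path $\sigma\to\cdots\to\rho$ together with the closedness of $C_\sigma$ forces $\rho\in C_\sigma$, hence $C_\sigma=C$; as $\sigma$ was arbitrary, $C=\Tab(\lambda,n)$. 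I would take $\rho=\mathbf 1$, the all-ones filling. (If one prefers not to invoke stationarity of $\mu$, it is equivalent to prove that $\mathbf 1$ both reaches and is reached from every filling; see the last paragraph.)

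\emph{Every filling reaches $\mathbf 1$.} I would treat the columns of $\dg(\lambda)$ one at a time and, within a column, work upward from the bottom row, using two elementary facts about the maps $R_u$: (i) the clock at a bottom cell $(1,i)$ is always on, and firing it replaces $\sigma(1,i)$ by $\sigma(1,i)+1\pmod n$ (while possibly incrementing some cells directly above it in column $i$); (ii) for $r\ge 2$ the clock at $(r,i)$ is on exactly when $\sigma(r-1,i)\ne\sigma(r,i)$, and firing it increments a maximal run of contiguous values (cyclically $+1$ at each step) in column $i$ beginning at $(r,i)$ and extending \emph{upward only}, so it never changes the contents of rows $1,\dots,r-1$. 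Thus: fire at $(1,i)$ until $\sigma(1,i)=1$; then, if $\sigma(2,i)\ne 1$, fire at $(2,i)$, whose clock remains on as long as $\sigma(2,i)\ne 1$, incrementing $\sigma(2,i)$ around $\mathbb Z/n$ until it equals $1$ --- all without touching row $1$; iterate up the column. Once column $i$ is all ones, the operators attached to cells of other columns (which act within their own column) do not disturb it, so after processing all columns we have reached $\mathbf 1$. This proves the proposition for strict $\lambda$.

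\emph{General $\lambda$, and the main obstacle.} When $\lambda$ has repeated parts a transition is $R'_u=\tau_y\circ R_u$, and the correction $\tau_y$ is nontrivial exactly when the run triggered at $u$ reaches the top cell of its column while that cell carries the value $n$; there $\tau_y$ permutes the contents of the top cells across a degenerate segment, i.e.\ a maximal block of equal-height columns. What matters is that $\tau_y$ preserves $\proj$ and reshuffles only columns of a single common height. I expect the bulk of the work to be in running the schedule above so that this does not spoil already-completed columns, via either (a) arranging the firings so that no run ever arrives at a column top carrying the value $n$ --- for instance by keeping the top entry of each column out of sync with the entry immediately below it until that column's final steps, so that runs stop short of its top --- or (b) processing a whole degenerate segment of equal-height columns simultaneously, reading each $\tau_y$-swap merely as a permutation of which column of the block is currently being emptied. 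Making either option precise is the delicate part; equally delicate, should one want an argument independent of the stationarity of $\mu$, is the companion statement that $\mathbf 1$ reaches every filling, the subtlety there being that cycling a lower cell can let its run merge into an already-fixed cell above it --- which I would circumvent by exploiting the cyclicity of $\mathbb Z/n$ to route around the forbidden values. The remainder is routine bookkeeping.
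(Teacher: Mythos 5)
Your reduction via stationarity is legitimate and non-circular: the global balance relation \eqref{eq:balance} (and its general-$\lambda$ analogue) is established without any appeal to irreducibility, so for $t>0$ the weight $\wt$ normalizes to a fully supported stationary distribution, every state is recurrent, and it does suffice to reach a single target filling from everywhere. Your column-by-column schedule then settles the strict case completely, and is a genuinely different organization from the paper's (the paper proves both directions of reachability through the all-ones filling directly). However, the proposition is stated for arbitrary $\lambda$, and for repeated parts your argument stops at an explicit ``to do'': you name two candidate strategies for coping with the corrections $\tau_y$ and concede that making either precise is the delicate part. That is exactly where the content of the general case lies, so as written the proof is incomplete. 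Note also that your reduction excuses you from the direction that turns out to be the easy one --- building a target filling from the all-ones filling cell by cell, bottom row upward, never increments an $n$ to a $1$, so no $\tau_y$ is ever triggered --- while the direction you still owe is precisely the one in which the corrections bite.

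The missing idea, which is how the paper closes this gap, is to organize the sweep by rows rather than by columns. Let $r$ be the lowest row containing an entry greater than $1$, and repeatedly fire $R'_u$ at any cell $u$ of row $r$ with $\sigma(u)>1$ (a transition of strictly positive rate since $t>0$). The correction $\tau_y$ only permutes entries within each row among a block of equal-height columns: it acts as the identity on rows $1,\dots,r-1$, which already contain only $1$'s, and it merely permutes the multiset of entries of row $r$; meanwhile each firing strictly decreases the total number of cyclic increments still needed to bring every row-$r$ entry to $1$. Hence the procedure terminates with rows $1,\dots,r$ filled with $1$'s, and induction on $r$ finishes the argument. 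This is essentially your option (b), but the row-wise bookkeeping is what makes it go through; the column-wise schedule cannot be salvaged directly, because a single $R'_u$ fired in one column can relocate entries of a column you have already ``completed.''
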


\begin{proof}
When $t=0$, this proposition is proved in \cref{lem:irreducible0}. 
Here we cover the case $t>0$. Call $\tau_0\in\Tab(\lambda,n)$ the filling consisting of all $1$'s. We first show that for any $\sigma\in\Tab(\lambda,n)$, there exists some sequence of transitions to reach the state $\tau_0$. We do this by induction on the lowest row of $\sigma$ that contains an entry greater than 1; if $\sigma$ has no such row, $\sigma=\tau_0$ and we are done. Otherwise, let $r$ be the lowest such row, so that rows $1,\ldots,r-1$ of $\sigma$ contain only 1's. Setting $\sigma_0=\sigma$, iteratively perform transitions $R_{u_j}$ to obtain $\sigma_j=R_{u_j}(\sigma_{j-1})$, with $u_j$ being any cell (for example, we can choose the rightmost one) in row $r$ such that $\sigma_{j-1}(u_j)>1$. Since $t>0$, all such transitions have strictly positive rate. Eventually, this process will necessarily terminate at some $\sigma_k$ which has its rows $1,\ldots,r$ containing only 1's. Repeating this process for the remaining rows, from $r+1$ to $\lambda_1$, we necessarily arrive at $\tau_0$.

For example, let $\lambda=(3,2,2)$ and $n=3$. The sequence of transitions is (with incremented entries in bold):
\[
\sigma=\tableau{1\\3&1&3\\3&2&1} \rightarrow \tableau{1\\3&1&3\\3&\pmb{\bm{3}}&1}\rightarrow \tableau{1\\3&\pmb{\bm{2}}&3\\3&\pmb{\bm{1}}&1} \rightarrow \tableau{1\\3&2&3\\ \pmb{\bm{1}}&1&1}\rightarrow \tableau{1\\3&\pmb{\bm{1}}&2\\1&1&1} \rightarrow \tableau{1\\3&1&\pmb{\bm{3}}\\1&1&1} \rightarrow \tableau{1\\3&\pmb{\bm{1}}&1\\1&1&1} \rightarrow \tableau{1\\\pmb{\bm{1}}&1&1\\1&1&1} =\tau_0
\]

Now we show that any filling $\sigma'$ can be reached from $\tau_0$ by some sequence of transitions. We do this in the same manner, by taking ringing path transitions for each cell from the bottom row upwards until the desired content is achieved. Notice that every such ringing path transition will affect only the cell $u$ that triggers said transition, since at each step of the process, if $u$ is selected to trigger a transition, $\North(u)=1\neq u$ (as we are working from bottom to top). Moreover, by construction there will not be any transitions that result in an $n$ becoming a 1, and so no $\tau_j$'s need to be applied to the filling, and all cells remain in their original place.

For example, we show the sequence of transitions given by our algorithm from $\tau_0$ to the $\sigma$ above (with incremented entries in bold):
\[
\tau_0=\tableau{1\\1&1&1\\1&1&1} \rightarrow \tableau{1\\1&1&1\\1&\pmb{\bm{2}}&1} \rightarrow \tableau{1\\1&1&1\\\pmb{\bm{2}}&2&1} \rightarrow \tableau{1\\1&1&1\\\pmb{\bm{3}}&2&1} \rightarrow \tableau{1\\1&1&\pmb{\bm{2}}\\3&2&1} \rightarrow \tableau{1\\1&1&\pmb{\bm{3}}\\3&2&1} \rightarrow \tableau{1\\\pmb{\bm{2}}&1&3\\3&2&1} \rightarrow \tableau{1\\\pmb{\bm{3}}&1&3\\3&2&1} =\sigma
\]
This completes the proof.
\end{proof}

The main result of this section is the following generalization of \cref{thm:maintheorem0}.

\begin{thm}
\label{thm:maintheorem}
Consider the tableau Markov process described after \cref{def:tauy}, where for a state $\sigma\in\Tab(\lambda,n)$, each cell $u\in\dg(\lambda)$ is equipped with an exponential clock with rate $x_{\sigma(u)}^{-1}t^{\down(\sigma,u)}$ that triggers the transition to $R'_u(\sigma)$. The stationary distribution of this process is proportional to 
\[\wt(\sigma) = t^{\quinv(\sigma)}x^{\sigma}.\]
\end{thm}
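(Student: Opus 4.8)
The plan is to prove the balance equation exactly as in the proof of \cref{thm:maintheorem0}, but now accounting carefully for the operator $\tau_y$ that is appended to $R_u$. The starting point is the same: for a fixed $\sigma\in\Tab(\lambda,n)$ we must verify
\[
\wt(\sigma) \sum_{\nu\in \Out(\sigma)} \rate(\sigma,\nu) = \sum_{\xi\in \In(\sigma)} \wt(\xi)\rate(\xi,\sigma).
\]
The left-hand side is unchanged from the strict case, since the outgoing rates attached to $\sigma$ still only depend on the clocks $x_{\sigma(u)}^{-1}t^{\down(\sigma,u)}$ at cells $u$ with $\sigma(\South(u))\ne\sigma(u)$. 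So the entire task is to reorganize the right-hand side. The first step is to understand $\In(\sigma)$: I would establish an analogue of \cref{lem:g}(ii) for the map $R'_u$, showing that the predecessors of $\sigma$ are indexed by cells $y$ with $\sigma(\North(y))\ne\sigma(y)$, where now $\xi = R'^{-1}_u(\sigma)$ means we first undo the $\tau_y$ (which is itself an involution-like reflection of a degenerate segment, hence invertible by \cref{lem:tauquinv} applied to the individual $\tau_i$'s) and then apply $R^{-1}$. The subtlety is that $\tau_y$ moves the cell whose content is being decremented from position $y$ to position $y'$; so for a given $\sigma$, the relevant ``top of chain'' cell is $y'$ (in the reflected position), and one must check this bijection between $\In(\sigma)$ and admissible cells of $\sigma$ is still well-defined and surjective.

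Next, for each $\xi = R'^{-1}_y(\sigma)\in\In(\sigma)$ with triggering cell $u$, I would compute $\wt(\xi)\rate(\xi,\sigma)$. The monomial part is handled by \cref{lem:g}(i): $x^{\xi} = x^{\sigma}x_{\xi(u)}x^{-1}_{\sigma(y)}$ — here one must note that $\tau_y$ only permutes contents within a row, so it does not change the $x$-monomial, and the content being relocated equals $\sigma(y')$ in the reflected picture (which is $1$ when $\xi(y)=n$). The quinv part is where the construction does its work: by \cref{lem:quinvdiff}, $\quinv(R^{-1}_y(\sigma)) - \quinv(\sigma)$ carries the extra degenerate-triple correction term $\delta_{\lambda_j,h_u(\xi)}\delta_{\xi(y),n}(\#\{j'<j\}-\#\{j''>j\})$, and by \cref{lem:tauquinv} each $\tau_i$ in the composition $\tau_y$ changes quinv by $\pm 1$ according to whether the top entries of the two swapped columns compare as $>$ or $<$. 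The key claim to verify is that when the reflection $\tau_y$ slides the value (which is $1$ in $\sigma$, coming from an $n$ in $\xi$) from column $v$ to column $v'=2s+k-v$, the net quinv change from all the $\tau_i$'s is exactly $-\big(\#\{j'<j:\lambda_{j'}=\lambda_j\}-\#\{j''>j:\lambda_{j''}=\lambda_j\}\big)$, cancelling the correction term in \cref{lem:quinvdiff}. Intuitively: moving a $1$ past each column to its (new) left with a larger top entry removes a quinv, and moving past each to its right adds one; a careful bookkeeping of the degenerate segment shows the count matches. After this cancellation we recover
\[
\wt(\xi)\rate(\xi,\sigma) = x^{\sigma}x^{-1}_{\sigma(y)}t^{\quinv(\sigma)+\up(\sigma,y)},
\]
exactly equation \eqref{eq:wtxi}, now valid for all $\lambda$ — but with the caveat that $\up(\sigma,y)$ should really read $\up(\sigma,y')$, i.e.\ the up-statistic is evaluated at the reflected cell, which is the genuine ``top of chain'' cell for $\sigma$.

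With \eqref{eq:wtxi} restored, the remainder of the argument is identical to the proof of \cref{thm:maintheorem0}: refine the balance equation over $x$-monomials, so that it reduces for each $k\in[n]$ to
\[
\sum_{\substack{u\in \dg(\lambda)\\ \sigma(\South(u))\neq\sigma(u)=k}} t^{\down(\sigma,u)} = \sum_{\substack{y\in \dg(\lambda)\\ \sigma(\North(y))\neq\sigma(y)=k}} t^{\up(\sigma,y)},
\]
which is exactly $D(\sigma,k) = U(\sigma,k)$ from \cref{lem:updown}. Irreducibility is supplied by \cref{prop:tab process irred}, so the stationary distribution is unique and equals $\wt(\sigma)/\mathcal{Z}_{(\lambda,n)}$ as claimed. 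I expect the main obstacle to be the precise quinv bookkeeping in the middle step: one must show that the sequence of $\tau_i$-swaps comprising $\tau_y$ produces a net quinv change that cancels the degenerate-triple term in \eqref{eq:quinvdiff} \emph{and} that the resulting identity correctly identifies the predecessor bijection (i.e.\ that $y'$ plays the role of the ``top of chain'' cell and $\up$ is read there), since an off-by-one or a mismatch between which cell indexes the sum on each side would break the reduction to \cref{lem:updown}. This is essentially a careful case analysis using \cref{lem:tauquinv} and the structure of degenerate segments, and is presumably where the bulk of the technical work (likely deferred to \cref{sec:proofs}) lies.
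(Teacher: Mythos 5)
Your proposal follows the paper's proof exactly: the paper reduces \cref{thm:maintheorem} to the argument of \cref{thm:maintheorem0} with \cref{lem:quinvdiff} replaced by \cref{lem:repeatedquinvdiff}, which is precisely the identity $\quinv(\xi)-\quinv(\sigma')=\up(\sigma',y')-\down(\xi,u)$ you isolate as the crux, proved by the same bookkeeping of the $\tau_i$-swaps via \cref{lem:tauquinv} against the degenerate-triple correction term. Your only imprecision is attributing the full cancellation to the quinv change of the swaps alone, whereas part of it is absorbed into replacing $\up(\sigma,y)$ by $\up(\sigma',y')$ --- but you state the correct final formula with the up-statistic read at the reflected cell $y'$, so the argument is sound.
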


We now give a generalization of \eqref{eq:quinvdistinct}.

\begin{lemma}\label{lem:repeatedquinvdiff}
Let $\sigma = R_u(\xi)$ for $u=(r',v)$ such that $\xi(u)\neq\xi(\South(u))$, and let $y=H_u(\xi)$ be the unique cell such that $\xi=B_y(\sigma)$. Let $\sigma'=R'_u(\xi)=\tau_y (\sigma)$ where $\tau_y$ (given in \cref{def:tauy}) sends the content of the cell $y$ to the cell $y'$, as in \cref{fig:taushift} (the cells $y'$ and $y$ are marked with a square above). 

Then
\begin{equation}\label{eq:repeatedbalance}
\quinv(\xi)-\quinv(\sigma') = \up(\sigma',y') - \down(\xi,u).
\end{equation}
\end{lemma}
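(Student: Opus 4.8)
The plan is to combine \cref{lem:quinvdiff}, which already gives the change in quinv under the bare ringing-path map $R_u$, with \cref{lem:tauquinv}, which measures exactly how each elementary swap $\tau_i$ changes quinv. The point is that \eqref{eq:quinvdiff} differs from the desired \eqref{eq:repeatedbalance} only by the correction term
\[
\delta_{\lambda_v,h_u(\xi)}\,\delta_{\xi(y),n}\Big(\#\{j'<v:\lambda_{j'}=\lambda_v\}-\#\{j''>v:\lambda_{j''}=\lambda_v\}\Big),
\]
and also $\up(\sigma,y)$ is computed at $y$ in $\sigma$ rather than at $y'$ in $\sigma'=\tau_y(\sigma)$; I need to show that applying $\tau_y$ (a) changes quinv by exactly the negative of that correction term, and (b) is compatible with replacing $\up(\sigma,y)$ by $\up(\sigma',y')$. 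If $\xi(y)\neq n$ or $y$ is not in a degenerate segment, then $\tau_y=\mathrm{id}$ by \cref{def:tauy}, $y'=y$, and \eqref{eq:repeatedbalance} reduces to \eqref{eq:quinvdistinct}, so there is nothing to prove; hence I may assume throughout that $y=(\lambda_v,v)$ sits in a degenerate segment $\{(r,s),\dots,(r,s+k)\}$ with $\xi(y)=n$, so $\sigma(y)=1$.

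First I would analyze the effect of the composite $\tau_y=\tau_{v'-1}\circ\cdots\circ\tau_v$ (treating the case $v<v'$; the case $v>v'$ is symmetric) on $\quinv$. Each factor $\tau_i$ involves two columns $i,i+1$ of equal height; by \cref{lem:tauquinv}, $\tau_i$ changes $\quinv$ by $\pm 1$ or $0$ according to the comparison of the two top entries of those columns \emph{at the moment $\tau_i$ is applied}. The key observation is that as we apply $\tau_v,\tau_{v+1},\dots$ in order, the entry $1$ originally at the top of column $v$ (this is $\sigma(y)=1$) gets carried rightward one column at a time — this is the content-moving interpretation stated after \cref{def:tauy} and illustrated in \cref{fig:taushift} — so at the step where $\tau_i$ is applied the top-row comparison is between the value $1$ (in column $i$) and $\sigma(y_{i+1})$ where $y_{i+1}=(\lambda_{i+1},i+1)$ is the top cell of column $i+1$ in the original degenerate segment. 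Since $1$ is the smallest possible value, each such step contributes $-1$ unless the neighbouring top entry is also $1$, in which case it contributes $0$; hence the total change in quinv from $\tau_y$ is $-\#\{v<j\le v':\sigma(y_j)\neq 1\}=-\#\{v<j\le v':\xi(y_j)\neq 1\}$ (entries of top cells of the degenerate segment outside column $v$ are unchanged between $\xi$ and $\sigma$). By \cref{def:tauy}, $v'=2s+k-v$, and the cells $(r,s),\dots,(r,s+k)$ are precisely the degenerate segment of $\xi$ in row $r$; a short counting check shows the number of columns strictly between $v$ and $v'$ (inclusive of $v'$) equals $\#\{j''>v:\lambda_{j''}=\lambda_v\}-\#\{j'<v:\lambda_{j'}=\lambda_v\}$ — wait, I need to be careful with signs here, so let me instead just match indices: the columns $j$ with $v<j\le v'$ that lie in the degenerate segment are exactly the columns $s+1,\dots,v-s$ reflected, and running the same counting that produced the correction term in \cref{lem:quinvdiff} shows that $-\#\{v<j\le v':\xi(y_j)\neq 1\}$ cancels exactly the correction term in \eqref{eq:quinvdiff} coming from the degenerate triples at $y$ (using $\xi(y'')\neq 1$ versus $\xi(y'')=1$ on the right side of the segment, and the already-absorbed $\xi(y')\neq n$ conditions on the left). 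I would spell this bijection-of-contributions out carefully rather than the raw count, since that is exactly the bookkeeping the lemma after \cref{def:tauy} was set up to do.

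Second, I would check that $\up(\sigma',y')=\up(\sigma,y)$. Both are computed at a top cell of the degenerate segment, whose $\North$ cell does not exist, so $\sigma'(\North(y'))=0\neq 1=\sigma'(y')$ and $\sigma(\North(y))=0\neq 1=\sigma(y)$, meaning neither $\up$ value is $-\infty$; thus $\up(\sigma',y')=\#\{z\in\uarm(y'):\sigma'(z)=1\}$ and $\up(\sigma,y)=\#\{z\in\uarm(y):\sigma(z)=1\}$. Now $\uarm$ of a top-row cell consists of cells in the same row to the right plus cells in the row above to the left; since $\tau_y$ only permutes entries among columns $v,\dots,v'$ within rows $\ge r_{\max}$ of those columns, and $1$ is the value being transported, one verifies that the multiset of values $\{\sigma'(z):z\in\uarm(y')\}$ equals $\{\sigma(z):z\in\uarm(y)\}$ — the value $1$ that left column $v$'s arm re-enters at column $v'$'s arm, and all other relevant entries are simply relabelled by the column permutation — so the counts of $1$'s agree. (Alternatively, $\up(\sigma',y')=\down(\sigma',\text{something})$ can be routed through \cref{lem:updown}, but the direct comparison is cleaner.) Combining: $\quinv(\xi)-\quinv(\sigma')=\big(\quinv(\xi)-\quinv(\sigma)\big)+\big(\quinv(\sigma)-\quinv(\sigma')\big)$ equals, by \cref{lem:quinvdiff} and the computation above, $\big[\up(\sigma,y)-\down(\xi,u)+(\text{correction})\big]+\big[-(\text{correction})\big]=\up(\sigma,y)-\down(\xi,u)=\up(\sigma',y')-\down(\xi,u)$, which is \eqref{eq:repeatedbalance}.

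The main obstacle I anticipate is getting the sign and index bookkeeping in the second paragraph exactly right: one must be sure that "carrying the $1$ from column $v$ to column $v'$" via the chosen order of $\tau_i$'s does indeed result in each intermediate top-row comparison being "$1$ versus the original top entry of the next column" (and not, say, against an entry that has itself just been moved), and that the reflection formula $v'=2s+k-v$ lines the surviving degenerate-triple contributions up one-to-one with the quinv decrements. This is precisely the content the paper pre-engineered via \cref{def:tauy} and \cref{lem:tauquinv}, so I expect it to go through, but it is the step that requires genuine care rather than being routine; I would present it as an explicit matching of the $k$ (or fewer) elementary swaps against the $k$ (or fewer) degenerate triples at $y$, case-splitting only on whether each $\xi(y_j)$ equals $1$.
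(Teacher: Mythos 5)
Your overall strategy --- composing \cref{lem:quinvdiff} with a swap-by-swap analysis of $\tau_y$ via \cref{lem:tauquinv} --- is exactly the paper's, the reduction to the case $\xi(y)=n$ with $y$ in a degenerate segment is right, and your computation $\quinv(\sigma')-\quinv(\sigma)=-\#\{v<j\le v':\sigma(r,j)\neq 1\}$ (including the observation that each $\tau_{v+\ell}$ compares the transported $1$ against the \emph{original} entry $\sigma(r,v+\ell+1)$) is correct. The gap is in the final bookkeeping: the claim $\up(\sigma',y')=\up(\sigma,y)$ is false. For $v'>v$ the upper arm of $y'$ is a \emph{strict subset} of that of $y$: both contain the same row-above cells $(r+1,j)$, $j<s$ (untouched by $\tau_y$), but the cells $(r,j)$ with $v<j\le v'$ lie in $\uarm(y)$ and not in $\uarm(y')$, while every cell of $\uarm(y')$ keeps its $\sigma$-value. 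Hence $\up(\sigma,y)-\up(\sigma',y')=\#\{v<j\le v':\sigma(r,j)=1\}$, which is nonzero in general; your ``multiset of arm values is preserved'' argument fails because the $1$ re-enters at $y'$ itself, which is not a member of $\uarm(y')$, whereas the $v'-v$ displaced values all leave the arm. Concretely, in \cref{eg:R' transition} one has $\up(\sigma,y)=1$ but $\up(\sigma',y')=0$.

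Correspondingly, your other claim --- that $\quinv(\sigma)-\quinv(\sigma')$ by itself cancels the correction term of \eqref{eq:quinvdiff} --- is off by the same quantity: the correction term is $2v-2s-k=-(v'-v)$, while $\quinv(\sigma)-\quinv(\sigma')=(v'-v)-\#\{v<j\le v':\sigma(r,j)=1\}$. The two discrepancies are equal and opposite, so the lemma is true, but your chain of equalities passes through the quantity $\up(\sigma,y)-\down(\xi,u)$, which equals neither side of \eqref{eq:repeatedbalance} (in \cref{eg:R' transition} it equals $0$ while both sides equal $-1$). The paper's proof does precisely the missing step: it computes $\up(\sigma',y')-\up(\sigma,y)$ explicitly and shows that the \emph{sum} $(\up(\sigma,y)-\up(\sigma',y'))+(\quinv(\sigma)-\quinv(\sigma'))$ equals $v'-v$, and it is only this combination --- not either term alone --- that cancels the correction term.
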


\begin{proof}
Recall the statement of \cref{lem:quinvdiff}, which we rewrite here for convenience:
\begin{equation}\label{eq:quinvdiff2}
\begin{split}
\quinv(\xi)-\quinv(\sigma) =&
\up(\sigma,y)-\down(\xi,u)  \\
&+ \delta_{\lambda_v,h_u(\xi)} \delta_{\xi(y),n}
\Big(\#\left\{j'<v:\lambda_{j'}=\lambda_v \right\}-\#\left\{j''>v:\lambda_{j''}=\lambda_v \right\}\Big)
\end{split}
\end{equation}
where $(h_u(\xi),j):=H_u(\xi)$. First, if $h_u(\xi)<\lambda_v$ or $\xi(y)\neq n$, then $y'=y$ and $\sigma'=\sigma$, and we automatically get \eqref{eq:repeatedbalance} from the above, since the third term vanishes. Thus let us assume $h_u(\xi)=\lambda_v$ and that $\xi(y)=n$.

Next we consider $\quinv(\sigma)-\quinv(\sigma')$ when $y'\neq y$. Suppose the degenerate segment containing $y=(\lambda_v,v)$ consists of the cells $\{(\lambda_v,s),(\lambda_v,s+1),\ldots,(\lambda_v,s+k)\}$; accordingly $s \leq v \leq s+k$. Then $v' = 2s+k-v$ and $y'=(\lambda_v,v')$ is the cell to which the content $\sigma(y)=1$ is sent. Observe that this construction yields $s \leq v'\leq s+k$ and $v-s = (s+k)-v'$.

Suppose $v'>v$. Then the sequence of operators in $\tau_y$ is $\tau_v,\tau_{v+1},\ldots,\tau_{v'-1}$, in that order. Each swap moves the 1 to the adjacent column to its right. From \cref{lem:tauquinv}, for each $0\leq \ell\leq v'-v-1$, 
\[
\quinv(\tau_{v+\ell}\circ\tau_{v+\ell-1}\circ\cdots\circ\tau_{v}(\sigma)) - \quinv(\tau_{v+\ell-1}\circ\cdots\circ\tau_{v}(\sigma) ) = \delta_{\sigma(\lambda_v,v+\ell+1),1}-1
\]
and so 
\[
\quinv(\sigma')-\quinv(\sigma)=\quinv(\tau_y(\sigma))-\quinv(\sigma) = v-v' +\#\{v<\ell\leq v':\sigma(\lambda_v,\ell)=1\}.
\]
Similarly, if $v'<v$, we have
\[
\quinv(\sigma')-\quinv(\sigma)= v-v' -\#\{v'\leq \ell< v:\sigma(\lambda_v,\ell)=1\}.
\]
We also observe that 
\[
\up(\sigma',y')-\up(\sigma,y)=\begin{cases} -\#\{v<\ell\leq v':\sigma(\lambda_v,\ell)=1\}& \text{if}\ v'>v,\\
\#\{v<\ell\leq v':\sigma(\lambda_v,\ell)=1\}& \text{if}\ v'<v.
\end{cases}
\]
We combine the above to get
\[
(\up(\sigma',y')-\up(\sigma,y))+(\quinv(\sigma')-\quinv(\sigma)) = v-v' = 2v-2s-k.
\]
We now examine the third term of \eqref{eq:quinvdiff2}:
\[
\#\left\{j'<v:\lambda_{j'}=\lambda_v \right\}-\#\left\{j''>v:\lambda_{j''}=\lambda_v \right\} = (v-s)-(s+k-v) = 2v-2s-k,
\]
to obtain, 
\begin{align*}
\quinv(\xi)-\quinv(\sigma') =& (\quinv(\xi)-\quinv(\sigma)) + (\quinv(\sigma)-\quinv(\sigma')) \\
=& \big(\up(\sigma,y)-\down(\xi,u) +(2v-2s-k)\big) \\
&+ \big(\up(\sigma',y')-\up(\sigma,y)+(2s+k-2v)\big)\\
=& \up(\sigma',y')-\down(\xi,u),
\end{align*}
as desired, completing the proof.
\end{proof}

\begin{proof}[Proof of \cref{thm:maintheorem}]
The proof is identical to that of \cref{thm:maintheorem0}, except that \cref{lem:repeatedquinvdiff} replaces \cref{lem:quinvdiff}.
\end{proof}

\subsection{Projection from $\Tab(\lambda,n)$ onto $\TAZRP(\lambda,n)$}

In this section, we prove that the general Markov process on tableaux  described in \cref{sec:markovgeneral} projects to the TAZRP Markov chain. 
Recall the projection map $\proj$ from $\Tab(\lambda,n)$ to $\TAZRP(\lambda,n)$ given in \cref{def:f}.
The key idea is the following lemma.

\begin{lemma}
\label{lem:lumping}
Let $\sigma\in\Tab(\lambda,n)$.
\begin{itemize}
\item[(i)] For any cell $u$ in the first row of $\sigma$, 
there exists a transition from the TAZRP state $\proj(\sigma)\in\TAZRP(\lambda,n)$ to the TAZRP state $\proj(R'_u(\sigma))$.

\item[(ii)] Let $w=\proj(\sigma)\in\TAZRP(\lambda,n)$ be the corresponding TAZRP state of $\sigma$. Let $1 \leq j \leq n$ be a site, $1 \leq r \leq \lambda_1$ be a particle type such that $w_j = r$. 
Let $w'$ be the state in $\TAZRP(\lambda,n)$ obtained by moving particle $r$ one site to the right. Then 
\[
\sum_{\substack{u\in \dg(\lambda)\\ \proj(R'_u(\sigma))=w'}} \rate(\sigma,R'_u(\sigma)) = \rate(w,w').
\]
\end{itemize}
\end{lemma}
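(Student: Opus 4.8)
\textbf{Proof plan for \cref{lem:lumping}.}

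The plan is to verify the two parts separately, with part (ii) being the substantial one. For part (i), I would first recall from \cref{def:f} that $\proj(\sigma)$ is determined entirely by the bottom row of $\sigma$, and that applying $R_u$ for a cell $u=(1,c)$ in the first row changes $\sigma(1,c)$ from some value $a$ to $a+1 \pmod n$ (provided $\sigma(\South(u))=\sigma(u)$ never triggers, which is vacuous since $\South(u)=\emptyset$ and we use the convention $\sigma(\emptyset)=\infty$). The subsequent application of $\tau_y$ in $R'_u$ only permutes entries \emph{within} a degenerate segment in some row $r=\lambda_v$; if that row is the bottom row this still only permutes bottom-row entries among columns of the same (i.e.\ height-one) length, which corresponds to a relabelling of sites holding singleton columns of species-$1$ particles, hence is a legitimate rearrangement of the multiset configuration. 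In all cases $\proj(R'_u(\sigma))$ is obtained from $w=\proj(\sigma)$ by taking one particle — namely the one of species $\lambda_c$ sitting at site $a$ — and, if $R_u$ incremented the bottom entry, moving it to site $a+1$; otherwise $\proj$ is unchanged and the ``transition'' is the trivial one. Either way this is a valid TAZRP transition (or a no-op), giving (i).

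For part (ii), I would pin down exactly which cells $u$ in the first row produce a given target $w'$. Fix site $j$ and species $r$ with $w_j$ containing a part equal to $r$; equivalently, some column $c$ with $\lambda_c=r$ and $\sigma(1,c)=j$. Applying $R_u$ with $u=(1,c)$ moves that particle to site $j+1$, which is exactly the TAZRP transition producing $w'$. So the left-hand sum runs over all columns $c$ with $\lambda_c=r$ and $\sigma(1,c)=j$, and by the rate definition each contributes $x_j^{-1} t^{\down(\sigma,(1,c))}$. The claim then reduces to the identity
\[
x_j^{-1}\sum_{\substack{c:\,\lambda_c=r\\ \sigma(1,c)=j}} t^{\down(\sigma,(1,c))} \;=\; \rate(w,w') \;=\; x_j^{-1}\, t^{d_r}\sum_{i=0}^{c_r-1} t^i,
\]
where (in the TAZRP notation of \cref{sec:tazrp}) $c_r$ is the number of particles of species $r$ at site $j$ and $d_r$ the number of particles of species $>r$ at site $j$. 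Here I must be slightly careful about which first-row cells actually trigger: a cell $u=(1,c)$ triggers only when $\sigma(\South(u))\neq\sigma(u)$, which is automatic since $\South(u)$ doesn't exist and takes value $\infty$ — so every first-row cell with $\sigma(1,c)=j$ does contribute, and I should double-check that $R'_u$ versus $R_u$ doesn't change the target site (the $\tau_y$ correction acts in row $\lambda_v$, and since $\lambda_v=r$ could equal $1$ only when $r=1$; but even then it permutes the bottom entries and does not change which multiset configuration results, only which representative tableau — so $\proj(R'_u(\sigma))=\proj(R_u(\sigma))=w'$ still holds).

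So the heart of the matter is the combinatorial identity
\[
\sum_{\substack{c:\,\lambda_c=r\\ \sigma(1,c)=j}} t^{\down(\sigma,(1,c))} \;=\; t^{d_r}\,(1+t+\cdots+t^{c_r-1}).
\]
I would prove this by examining $\down(\sigma,(1,c))=\#\{y\in\arm((1,c)):\sigma(y)=j\}$ for a bottom-row cell $u=(1,c)$: here $\arm(u)$ consists only of bottom-row cells strictly to the left of $c$ (there is no row below), so $\down(\sigma,u)$ counts the columns $c'<c$ with $\sigma(1,c')=j$. Now list the columns $c$ with $\sigma(1,c)=j$ in increasing order of $c$; the $\down$ values along this list are $0,1,2,\dots$ But we restrict to those $c$ with $\lambda_c=r$ specifically. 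Since $\lambda$ is weakly decreasing, the columns of a fixed height $r$ form a contiguous block of column-indices, and the columns of height $>r$ form the block strictly to their left. Among the columns $c'<c$ with $\sigma(1,c')=j$, those of height $>r$ contribute a fixed count, which is exactly $d_r$ (the number of species-$>r$ particles at site $j$), while those of height exactly $r$ with smaller index range over $0,1,\dots,c_r-1$ as $c$ ranges over the relevant block. Summing $t^{\down}=t^{d_r+(\text{local offset})}$ over these gives $t^{d_r}(1+t+\cdots+t^{c_r-1})$ exactly. I expect the main obstacle to be purely bookkeeping: making the "contiguous block of columns of height $r$" argument airtight and confirming that only height-$\ge r$ columns to the left contribute to $d_r$ in the arm count (columns of height $<r$ are to the \emph{right} and never appear in $\arm((1,c))$ for $c$ in the height-$r$ block), together with checking the $t=0$ / degenerate-segment edge cases where $R'_u\neq R_u$.
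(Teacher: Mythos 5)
Your proposal is correct and follows essentially the same route as the paper: part (ii) reduces to the identity $\sum_i t^{\down(\sigma,(1,u_i))} = t^{d_{j,r}}\bigl(1+t+\cdots+t^{c_{j,r}-1}\bigr)$, proved by observing that for a bottom-row cell the lower arm consists only of the cells to its left in row $1$, and that by weak decrease of $\lambda$ the columns of height $>r$ with bottom entry $j$ all lie to the left and contribute the fixed offset $d_{j,r}$, while the height-$r$ columns contribute the local offsets $0,1,\dots,c_{j,r}-1$. One small correction to your discussion of the $R'_u$ versus $R_u$ issue: the operators $\tau_j$ composing $\tau_y$ do not act only in the degenerate-segment row $\lambda_v$ --- each swap propagates downward to row $r_{\max}$ and can reach row $1$ even when $\lambda_v>1$ --- but since each $\tau_j$ exchanges entries only between two columns of equal height, any bottom-row swap merely exchanges the positions of two particles of the same species and leaves $\proj$ unchanged, so your conclusion $\proj(R'_u(\sigma))=\proj(R_u(\sigma))$ still stands.
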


\begin{proof}
We first prove (i). If $u$ is not in the first row, $\proj(\sigma) = \proj(R'_u(\sigma))$, so the claim is trivial. If $u=(1,k)$ for some column $k$, one can check that $\proj(R'_u(\sigma))$ is equal to the state obtained from the jump from the state $\proj(\sigma)$ of a particle of type $\lambda_k$ from site $\sigma(u)$, as desired.

Now we prove (ii). Let $\{u_1,\ldots,u_k\}$ be the set of columns such that $\sigma(1,u_i)=j$ and $\lambda_{u_i} = r$ for $1 \leq i \leq k$. 
Then $\proj(R'_u(\sigma)) = w'$ if and only if $u = (1,u_i)$ for $1 \leq i \leq k$. Thus we wish to show that 
\[
\sum_{i =1}^k x_j^{-1} t^{\down(1,u_i)} = x_j^{-1} t^{d_{j,r}}\sum_{i=0}^{c_{j,r}-1} t^i,
\]
where $d_{j,r}$ is the number of particles of type larger than $r$ at site $j$ in $w$, and $c_{j,r}$ is the number of particles of type equal to $r$ at site $j$ in $w$. First we note that $k = c_{j,r}$ by definition. Now, $\down(1,u_i)$ consists of all cells with content $j$ that are to the left of $u_i$ in row 1, which is precisely the quantity $d_{j,r}+(i-1)$. This proves the above equality and completes the proof.
\end{proof}
 
 Thus we obtain the first main result of this article, \cref{thm:TAZRP-stat-distn}.

\section{Special cases of the mTAZRP}
\label{sec:special}

\subsection{The single-species TAZRP}
\label{sec:single}

As an example, we examine the case of the single-species TAZRP of size $n$, when $\lambda=\langle 1^m \rangle$. 
In general we have thought about TAZRP configurations as
multi-set compositions,  but in this more simple case we can 
regard a configuration in $\TAZRP(\langle 1^m \rangle,n)$ as a weak composition of $m$ of length $n$, say as  $\eta = (\eta_1,\dots,\eta_n)$,  where $\eta_i$ is the number of particles (of type $1$) at site $i$.

The fillings in $\Tab(\langle 1^m \rangle,n)$ are single row tableaux with integers in $[n]$ and
can be thought of as words of length $m$ in the alphabet $[n]$, which we denote by $W(m,n)$. 

Recalling the projection $\proj$ from \cref{def:f} that maps from $\Tab(\lambda,n)$ to states of the TAZRP of type $\lambda$ on $n$ sites, we 
can rewrite it for the single species TAZRP
as follows. 
Let $\sigma \in W(m,n)$ be a word. 
Then $\proj(\sigma)$ is the state $\eta=(\eta_1,\ldots,\eta_n)$ where 
$\eta_i=\#\{ j\,:\,\sigma_j=i\}$.
In other words, for a given configuration $\eta \in \TAZRP(\langle 1^m \rangle,n)$, the set $\{\sigma \in W(m,n)\,:\,\proj(\sigma)=\eta \}$ is precisely the set of permutations of the word $1^{\eta_1}2^{\eta_2}\ldots n^{\eta_n}$.

We define the weight of a word $\sigma$ to be $x^\sigma t^{\inv(\sigma)}$ where $\inv(\sigma) = \#\{i<j\,:\,\sigma_i < \sigma_j\}$ is the number of inversions, with the lowest and highest weights being words sorted in decreasing and increasing order, respectively. As usual, $x^\sigma=\prod_i x_{\sigma_i}$ is the monomial representing the content.

\begin{example}
The set of words corresponding to the state 
$(11|\cdot|11) \equiv (2,0,2) \in\TAZRP(\langle 1^4 \rangle, \allowbreak 3)$ is $\{(3,3,1,1), (3,1,3,1), (3,1,1,3), (1,3,3,1), (1,3,1,3), (1,1,3,3)\}$, and their weights are $x_1^2x_3^2$ times $1$, $t$, $t^2$, $t^2$, $t^3$ and $t^4$, respectively.
\end{example}

Let $\sigma=(\sigma_1,\ldots,\sigma_m)\in W(m,n)$. We define the transition $\pi_i\,:\,W(m,n) \rightarrow W(m,n)$ by:
\[
\pi_i(\sigma) = \begin{cases} \sigma+ \epsilon_i, & \mbox{if }\sigma_i<n\\
\sigma_{\big(\widehat{i}\big)}\ \mbox{with $\{1\}$ inserted in position } m+1-i, & \mbox{if } \sigma_i=n.
\end{cases}
\]
where $\epsilon_i$ is the vector consisting of all zeros except for a 1 in position $i$, and $\sigma_{\big(\widehat{i}\big)}=\sigma_1,\ldots,\sigma_{i-1},\sigma_{i+1},\ldots,\sigma_n$ is the word $\sigma$ with the element $\sigma_i$ removed. 
The rate of the transition $\pi_i$ is given by 
\[
\rate(\pi_i)=\frac{1}{x_{\sigma_i}}t^{\#\{j<i\,:\,\sigma_j=\sigma_i\}}.
\]
This gives the tableau Markov chain in the special case 
$\lambda=\langle 1^m \rangle$.
As a corollary of \cref{thm:maintheorem}, we obtain:
\begin{cor}
The stationary distribution of the Markov process on the set of words $W(m,n)$ is proportional to 
\[
\wt(\sigma)=x^\sigma t^{\inv(\sigma)}
\]
for each $\sigma\in W(m,n)$. 
\end{cor}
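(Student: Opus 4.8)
The plan is to derive this corollary directly from \cref{thm:maintheorem} by identifying the single-row tableau process explicitly. Since $\lambda = \langle 1^m\rangle$ has a single row of $m$ cells, the diagram $\dg(\lambda)$ has no vertical structure: every cell $u$ is simultaneously the bottom and the top of its (height-one) column. First I would reconcile the notational conventions. A cell $u$ in the unique row, sitting in column $i$ (counting from the left), corresponds under the reading-order convention to a position; but the weight $x^\sigma t^{\inv(\sigma)}$ with $\inv(\sigma) = \#\{i<j : \sigma_i < \sigma_j\}$ and the insertion rule ``position $m+1-i$'' strongly suggest that words are indexed so that position $1$ is the \emph{rightmost} cell. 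I would state this identification once and carry it through: a word $\sigma = (\sigma_1,\dots,\sigma_m)$ is the filling whose cell in reading position $i$ has content $\sigma_i$.

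The key step is to check that, under this identification, the three structures of \cref{thm:maintheorem} specialize to the three stated ingredients. \textbf{(a) The quinv statistic becomes $\inv$.} With $\lambda$ strict — wait, $\langle 1^m\rangle$ is \emph{not} strict when $m>1$, so all columns have equal height and there are no non-degenerate triples (these require a cell $(r+1,i)$, impossible here); only degenerate triples $((r,i),(r,j))$ with $i<j$ and $\lambda_i = r = 1$ survive, and such a pair is a quinv triple iff $\sigma(1,i) < \sigma(1,j)$. Since in reading order column $i$ sits to the \emph{right} of column $j$ when $i<j$ (reading right-to-left), a quinv triple corresponds exactly to a pair of reading positions $p < p'$ with $\sigma_p < \sigma_{p'}$ — i.e., $\quinv(\sigma) = \inv(\sigma)$. \textbf{(b) The clock rates.} For a bottom cell $u$ with content $\sigma(u) = k$, we have $\South(u) = \emptyset$, so $\sigma(\South(u)) = \infty \neq k$ and the clock is always active; $\down(\sigma,u) = \#\{y \in \arm(u) : \sigma(y) = k\}$, and since $u$ is in the bottom row, $\arm(u)$ is just the cells to its left in the row, i.e. positions $j < i$ in reading order — matching $\rate(\pi_i) = x_{\sigma_i}^{-1} t^{\#\{j<i : \sigma_j = \sigma_i\}}$. \textbf{(c) The transition map $R'_u$ becomes $\pi_i$.} Since the column has height one, $h_u(\sigma) = 1$, so $R_u$ simply replaces $\sigma(u) = k$ by $k+1 \bmod n$. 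If $k < n$ this is $\sigma \mapsto \sigma + \epsilon_i$. If $k = n$ then $R_u(\sigma)(u) = 1$, and $u$ lies in a degenerate segment which is the \emph{entire} row (all columns have height $1$), so $s=1$, $k=m-1$ in the notation of \cref{def:tauy}, and $v' = 2 + (m-1) - v = m+1-v$; applying $\tau_y$ slides the content $1$ from reading position $v$ to reading position $m+1-v$, which is precisely ``remove $\sigma_i$, reinsert a $1$ at position $m+1-i$'' (in word coordinates, after translating reading position to word index). I would verify the direction of the $\tau$-composition matches the direction of the slide.

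With (a), (b), (c) in hand, the Markov chain on $W(m,n)$ with transitions $\pi_i$ and the stated rates \emph{is} the tableau Markov chain of \cref{sec:markovgeneral} for $\lambda = \langle 1^m\rangle$, so \cref{thm:maintheorem} gives that its stationary distribution is proportional to $t^{\quinv(\sigma)} x^\sigma = t^{\inv(\sigma)} x^\sigma$, which is the claim. I would close by noting irreducibility is covered by \cref{prop:tab process irred}.

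The main obstacle is purely bookkeeping: getting every index convention aligned — reading order versus column order, word position versus cell coordinate, the ``$m+1-i$'' reflection versus the ``$2s+k-v$'' reflection of \cref{def:tauy} — so that the sign/direction of $\inv$, the left-arm count, and the $\tau$-slide all genuinely coincide rather than merely resembling each other. Once the dictionary is pinned down the rest is immediate from \cref{thm:maintheorem}; no new combinatorial input is needed.
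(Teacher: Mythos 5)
Your overall strategy is exactly the paper's: the text offers no separate argument for this corollary beyond observing that the single-row tableau process \emph{is} the word process with transitions $\pi_i$ and then invoking \cref{thm:maintheorem}, and your items (a)--(c) are precisely the checks needed to make that observation precise. Your computation in (c) (the reflection $v'=2s+k-v=m+1-v$ matching the ``insert at position $m+1-i$'' rule), the observation that only degenerate triples survive for $\langle 1^m\rangle$, and the appeal to \cref{prop:tab process irred} for irreducibility are all correct.

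However, the dictionary you commit to is the wrong one, and with it steps (a) and (b) fail as stated. The intended identification is the direct one: word position $i$ is column $i$, read left to right, so $\sigma_i=\sigma(1,i)$. Under that identification a degenerate triple on columns $i<j$ is a quinv triple iff $\sigma(1,i)<\sigma(1,j)$, which is literally the paper's $\inv(\sigma)=\#\{i<j:\sigma_i<\sigma_j\}$; and the lower arm of the cell $(1,i)$ consists of the cells in columns $j<i$, matching the exponent $\#\{j<i:\sigma_j=\sigma_i\}$ in $\rate(\pi_i)$. Under your reversed identification (position $1$ equals the rightmost cell), column $i<j$ sits at word position $m+1-i>m+1-j$, so a quinv triple becomes a pair of word positions $p<p'$ with $\sigma_p>\sigma_{p'}$ --- the classical descent-pair count, \emph{not} the paper's $\inv$ --- and ``cells to its left in the row'' become positions $j>i$, not $j<i$. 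Your text in (a) and (b) silently reverts to the direct convention to reach the stated conclusions, which is why those claims look right while being inconsistent with the declared dictionary. (Step (c) happens to be insensitive to the choice, since the reflection $v\mapsto m+1-v$ is its own mirror image.) The fix is trivial --- adopt the direct identification and the three checks go through --- but as written the dictionary is the one place where the argument is actually wrong rather than merely tedious, which is exactly the failure mode you flagged at the outset.
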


For the one-type case,  the symmetry properties we investigate in this paper hold for zero range processes with a very much wider class of jump rates.  One-type stationary distributions with product form were already obtained in \cite{spitzer-1970}; see also \cite{andjel-1982}, and Chapter 3.4 of \cite{kelly-book} for a broader context involving networks of quasi-reversible queueing servers. Remaining in the totally asymmetric case on the ring, where all jumps from site $i$ go to site $i+1$, cyclically mod $n$, suppose the jump rate at site $i$ when it has $r$ particles is $g_i(r)$. Take any $\alpha>0$ such that for all $i$, $\alpha<\liminf_{r\to\infty} g_i(r)$. Then the distribution given by 
\begin{equation}\label{general-zero range}
\PP(\eta_1, \dots, \eta_n) \;
\propto \; \prod_{i=1}^n \frac{\alpha^{\eta_i}}{\prod_{r=1}^{\eta_i}g_i(r)}
\end{equation}
is invariant for the process. 
To obtain the stationary distribution for a system with a specified number $m$ of particles, one restricts (\ref{general-zero range}) to
the set where $\eta_1+\dots+\eta_n=m$ (in which case the common 
factor $\alpha^m$ can be removed). 

From the product form of the distribution, one immediately observes
symmetry properties stronger than those given 
in Theorem \ref{thm:TAZRP-symmetry}.  For example, if we fix any subset $L$
of the sites, the distribution restricted to 
the sites in $L$ is unchanged under permutation 
of the jump rate functions $(g_i, i\notin L)$.

In our particular case with $g_i(r)=x_i^{-1}(1-t^r)/(1-t)$, 
the stationary distribution can be expressed in the following form.

\begin{prop}
\label{prop:single-species-prob}
The stationary distribution of the mTAZRP on $\operatorname{\TAZRP}(\langle 1^m\rangle,n)$
is given by
\[
\PP(\eta_1,\dots, \eta_n) = \frac{1}{\widetilde{H}_{\langle 1^{m} \rangle}(x_1,\dots,x_n; 1, t)}
\qbinom m{\eta_1,\dots, \eta_n}_t \prod_{i=1}^n x_i^{\eta_i},
\]
when $\eta_1+\dots+\eta_n=m$.
\end{prop}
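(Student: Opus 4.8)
The plan is to compute the sum $\sum_{\sigma}\wt(\sigma)$ over all words $\sigma\in W(m,n)$ with $\proj(\sigma)=\eta$, and then invoke \cref{thm:TAZRP-stat-distn} together with the identity $\mathcal{Z}_{(\langle 1^m\rangle,n)}=\widetilde{H}_{\langle 1^m\rangle}(x_1,\dots,x_n;1,t)$. Since $\proj(\sigma)=\eta$ exactly when $\sigma$ is a rearrangement of the word $1^{\eta_1}2^{\eta_2}\cdots n^{\eta_n}$, and the $x$-content of every such rearrangement is the fixed monomial $\prod_i x_i^{\eta_i}$, the whole computation reduces to showing
\[
\sum_{\sigma} t^{\inv(\sigma)} = \qbinom{m}{\eta_1,\dots,\eta_n}_t,
\]
where the sum ranges over all rearrangements of $1^{\eta_1}\cdots n^{\eta_n}$. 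Here the $\inv$ statistic is $\#\{i<j:\sigma_i<\sigma_j\}$, which counts inversions between a smaller letter in an earlier position and a larger letter in a later position (i.e.\ the \emph{non}-inversions in the usual permutation sense, restricted to strictly increasing pairs).

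First I would recall that this $t$-enumeration of rearrangements of a word is a classical fact: the generating polynomial $\sum_\sigma t^{\inv(\sigma)}$ over all permutations of a multiset with content $(\eta_1,\dots,\eta_n)$ equals the Gaussian multinomial coefficient $\qbinom{m}{\eta_1,\dots,\eta_n}_t = \frac{[m]_t!}{[\eta_1]_t!\cdots[\eta_n]_t!}$, where $[k]_t! = \prod_{j=1}^k (1+t+\cdots+t^{j-1})$. (Strictly, the standard statement uses the number of inversions $\#\{i<j:\sigma_i>\sigma_j\}$; but since each pair of distinct letters contributes an inversion to exactly one of $\sigma$ and its reversal, and the Gaussian coefficient is palindromic in $t$, the two statistics have the same generating function over the orbit of rearrangements.) The cleanest self-contained proof is by induction on $n$: condition on the positions occupied by the letter $n$ among the $m$ slots. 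Removing the $n$'s leaves a rearrangement $\sigma'$ of $1^{\eta_1}\cdots (n-1)^{\eta_{n-1}}$, and each $n$ placed in a slot with $p$ non-$n$ letters to its left contributes exactly $p$ to $\inv(\sigma)$ (since $n$ is maximal, every earlier smaller letter counts); summing $t$ to the total such contribution over all ways to interleave $\eta_n$ copies of $n$ into $m-\eta_n$ fixed positions gives exactly $\qbinom{m}{\eta_n}_t$, and the remaining inversions are precisely $\inv(\sigma')$. Hence $\sum_\sigma t^{\inv(\sigma)} = \qbinom{m}{\eta_n}_t \sum_{\sigma'} t^{\inv(\sigma')}$, and the induction closes with base case $n=1$ (a single word, no inversions, value $1 = \qbinom{m}{m}_t$).

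Combining: by \cref{thm:TAZRP-stat-distn} the stationary probability of $\eta$ is $\frac{1}{\mathcal{Z}_{(\langle 1^m\rangle,n)}}\sum_{\proj(\sigma)=\eta}x^\sigma t^{\inv(\sigma)} = \frac{1}{\widetilde{H}_{\langle 1^m\rangle}(x_1,\dots,x_n;1,t)}\Big(\prod_i x_i^{\eta_i}\Big)\qbinom{m}{\eta_1,\dots,\eta_n}_t$, which is the claimed formula. One should also note that the weight of a word as defined just above the proposition agrees with $\wt(\sigma)$ from \cref{def:stats} in the single-row case: for a one-row diagram the only triples are the degenerate ones $\{(1,i),(1,j)\}$ with $i<j$, which are quinv precisely when $\sigma(1,i)<\sigma(1,j)$, so $\quinv(\sigma)=\inv(\sigma)$ as defined here. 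The only real obstacle is purely expository — making sure the reader sees that $\quinv$ on one-row fillings is exactly this $\inv$ statistic and that it matches the standard inversion-count up to the palindromic symmetry of the Gaussian coefficient; the combinatorial content is the elementary induction above and requires no new idea.
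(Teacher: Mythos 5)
Your proof is correct, but it takes a different route from the one the paper intends. You work entirely inside the combinatorial framework: identify $\quinv$ on one-row fillings with the statistic $\#\{i<j:\sigma_i<\sigma_j\}$, prove by the standard MacMahon-type induction (conditioning on the positions of the largest letter) that this statistic's generating polynomial over rearrangements of $1^{\eta_1}\cdots n^{\eta_n}$ is $\qbinom{m}{\eta_1,\dots,\eta_n}_t$, and then apply \cref{thm:TAZRP-stat-distn}; all of these steps are sound, and your observation that the reversal bijection exchanges the two inversion statistics correctly handles the discrepancy with the usual convention. The paper instead obtains the proposition as an immediate specialization of the classical product-form invariant measure \eqref{general-zero-range} for single-species zero-range processes: with $g_i(r)=x_i^{-1}(1-t^r)/(1-t)$ one gets $\prod_{r=1}^{\eta_i}g_i(r)=x_i^{-\eta_i}\prod_{r=1}^{\eta_i}\frac{1-t^r}{1-t}$, so the stationary weight is proportional to $\prod_i x_i^{\eta_i}\big/\prod_i\prod_{r=1}^{\eta_i}\frac{1-t^r}{1-t}$, which after multiplying by the $t$-factorial of $m$ is exactly $\qbinom{m}{\eta_1,\dots,\eta_n}_t\prod_i x_i^{\eta_i}$, with the normalization identified as $\widetilde{H}_{\langle 1^m\rangle}(X;1,t)$ via the monomial expansion \eqref{eq:Ht-monomialsum}. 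Your derivation has the merit of being self-contained within the tableau machinery and of reconfirming the main theorem in this special case; the paper's is a one-line consequence of a much older probabilistic fact that applies to arbitrary rate functions $g_i$, which is also what powers the stronger symmetry remarks made there. Either argument is acceptable.
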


See \cref{sec:partition} for a discussion of properties of the common denominator $\widetilde{H}_{\lambda}$ in the special case $\lambda=\langle 1^{m} \rangle$.

\subsection{The $t=0$ case}
\label{sec:t0}
At $t=0$, the TAZRP dynamics simplify: at each nonempty site, there is a unique 
possible transition. If there is at least one particle present at $i$, the total jump rate at site $i$ is $x_i^{-1}$. When such a jump occurs, the particle with the
largest label jumps to $i+1$ (cyclically mod $n$).  (If there is more than one equally strong particle at the site,  just one of them jumps.)

On the tableaux side, setting $t=0$ means that the only tableaux
$\sigma\in\Tab(\lambda,n)$ with positive weight
are those with $\quinv(\sigma)=0$. In the tableau Markov chain,
the only transitions from such a $\sigma$ are $R'_u$ for cells
$u$ such that $\down(\sigma,u)=0$. In particular, in the bottom row of $\sigma$, these are the leftmost cells with any given content.

\begin{lemma}
\label{lem:irreducible0}
The tableau Markov chain on $\quinv$-free fillings $\sigma$ with transitions $R'_u$ for each cell $u$ such that $\down(\sigma,u)=0$, is irreducible.
\end{lemma}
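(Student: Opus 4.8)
The goal is to show that from any $\quinv$-free filling one can reach any other $\quinv$-free filling using only the transitions $R'_u$ with $\down(\sigma,u)=0$. The plan is to pick a convenient ``sink'' state and show every $\quinv$-free filling can both reach it and be reached from it; irreducibility then follows. The natural candidate for the sink is the unique $\quinv$-free filling $\sigma^*$ in which every cell of column $i$ contains $n-\lambda_i+1, n-\lambda_i+2,\dots,n$ reading bottom to top --- equivalently, the column whose entries, read upward, are the largest possible consecutive run ending at $n$; one checks directly from \cref{def:quinv} that this filling has no quinv triples (each column is strictly increasing and columns are arranged so no counterclockwise configuration appears) and, crucially, every bottom-row cell has a distinct content, so $\down(\sigma^*,u)=0$ for all bottom cells. (If several bottom cells must share a content because $n$ is too small relative to the number of columns, one adapts the target to the lexicographically largest $\quinv$-free filling; the argument is unchanged in spirit.)

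First I would establish reachability \emph{of} $\sigma^*$: given any $\quinv$-free $\sigma$, I claim there is always a cell $u$ with $\down(\sigma,u)=0$ whose transition $R'_u$ strictly increases some natural monovariant --- for instance $\sum_{v\in\dg(\lambda)} \phi(\sigma(v))$ where $\phi$ is chosen so that incrementing an entry mod $n$ (and the resulting possible wraparound $n\mapsto 1$) is compensated; more robustly, I would argue by a lexicographic/column-by-column induction exactly as in the proof of \cref{prop:tab process irred}, but now taking care that at each step the triggering cell has $\down=0$ and that applying $R'_u$ (hence possibly a $\tau_y$) preserves $\quinv$-freeness. The key local fact needed is: if $\sigma$ is $\quinv$-free and $u$ is a cell with $\down(\sigma,u)=0$, then $R'_u(\sigma)$ is again $\quinv$-free --- this follows from \cref{lem:repeatedquinvdiff} since $\quinv(\xi)-\quinv(\sigma') = \up(\sigma',y')-\down(\xi,u)$ and both $\up$ and $\down$ are $\ge 0$ (or $-\infty$, excluded here), so $\quinv$ can only decrease or stay zero as we move toward the sink, and the reverse map $R'^{-1}$ correspondingly builds up from the sink without ever creating a quinv triple if we follow the transitions in reverse. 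Concretely: process columns/rows from the bottom up; in the bottom row repeatedly fire the leftmost cell of maximal content (which has $\down=0$) to push contents toward their target values in $\sigma^*$, never creating a quinv triple because the partial filling below stays ordered; then move to row two, and so on. Since each fired transition monotonically advances an entry toward its target in a well-founded order, the process terminates at $\sigma^*$.

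Second, reachability \emph{from} $\sigma^*$: since each $R'_u$ with $\down=0$ is invertible on the $\quinv$-free locus (its inverse is the reverse ringing-path move, possibly preceded by the inverse $\tau$, by \cref{lem:g} and the construction after \cref{def:tauy}), the sequence of transitions taking $\sigma$ to $\sigma^*$ can be reversed to take $\sigma^*$ to $\sigma$ --- provided each reversed step is itself an allowed transition, i.e.\ the cell triggering the reversed move has $\down=0$ in the intermediate state. This is exactly where \cref{lem:g}(ii) is used: if $\xi = R^{-1}_y(\sigma)$ with $\sigma(y)\neq\sigma(\North(y))$, then there is a unique $u$ with $R_u(\xi)=\sigma$ and $\xi(u)\neq\xi(\South(u))$; I must check additionally that $\down(\xi,u)=0$, which holds because along the forward chain from $\sigma^*$ all $\down$-values on triggered cells were $0$ and the $\quinv$-freeness is maintained throughout (a cell with $\down(\xi,u)>0$ would force, via the $\up$/$\down$ identity, a quinv-creating step somewhere). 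Combining the two directions, any two $\quinv$-free fillings communicate via $\sigma^*$, so the chain is irreducible.

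\textbf{Main obstacle.} The delicate point is \emph{not} the abstract two-sided reachability argument but verifying that the reverse moves stay within the allowed transition set --- i.e.\ that one never needs to fire a cell with $\down>0$. The cleanest way around this is to prove the local lemma: \emph{on the $\quinv$-free locus, $R'_u$ with $\down(\sigma,u)=0$ and its inverse $(R'_u)^{-1}$ (fired at the appropriate cell $y$ with $\up=0$) are mutually inverse bijections between $\quinv$-free fillings}, using \cref{lem:repeatedquinvdiff} to pin down that $\up(\sigma',y')=\down(\xi,u)$ forces both sides to vanish whenever either filling is $\quinv$-free. Once that lemma is in hand, the explicit column-by-column fire-the-leftmost-maximal-cell algorithm toward $\sigma^*$ and its reversal finish the proof with only routine bookkeeping; I would present the algorithm concretely (as in the worked example in the proof of \cref{prop:tab process irred}) to make the $\quinv$-freeness preservation transparent at each step.
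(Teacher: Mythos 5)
The forward half of your argument---repeatedly fire a cell with $\down=0$ so as to drive any $\quinv$-free filling to a fixed sink---is essentially the paper's proof, which takes the all-$n$'s filling $\sigma^{\max}$ as the sink and fires the bottom-most, left-most cell $u$ with content $<n$; that choice makes the verification $\down(\sigma,u)=0$ immediate, since every cell of $\arm(u)$ already contains $n$. Your sink (columns filled with $n-\lambda_i+1,\dots,n$), together with the hedge that for small $n$ ``one adapts the target \dots\ the argument is unchanged in spirit'', is both more complicated and not actually pinned down; the all-$n$'s filling avoids all of that. On the other hand, your observation that \cref{lem:repeatedquinvdiff} forces $\quinv(R'_u(\sigma))=0$ and $\up=0$ at the landing cell whenever $\quinv(\sigma)=\down(\sigma,u)=0$ is a clean justification of the step the paper dismisses as ``a straightforward check'', and is worth keeping.

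The genuine gap is in the reverse direction, which is the only part going beyond what the paper writes down. Reachability of an arbitrary $\quinv$-free filling \emph{from} the sink cannot be obtained by ``reversing the path'': every transition of the chain is some $R'_u$, which increments entries (cyclically mod $n$) along an upward ringing path, whereas the inverse maps $R_y^{-1}$ decrement entries and are not of the form $R'_v$ for any cell $v$. The reversed sequence of moves is therefore simply not a path in the Markov chain, and your proposed fix---showing $R'_u$ and its inverse are mutually inverse bijections on the $\quinv$-free locus---does not help, because a bijection between states is not a transition. To close this you need either an explicit forward construction from $\sigma^{\max}$ to an arbitrary $\quinv$-free filling in which every fired cell has $\down=0$ (the bottom-up construction in the second half of the proof of \cref{prop:tab process irred} does not carry over verbatim, since at $t=0$ the intermediate states can have $\down>0$ at the cell one wants to fire), or an indirect argument: the balance relation \eqref{eq:balance} exhibits the everywhere-positive measure $\wt(\sigma)=x^{\sigma}$ as stationary, so every $\quinv$-free filling is recurrent and hence lies in the unique closed communicating class, which by your forward direction contains $\sigma^{\max}$.
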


\begin{proof}
Let $\lambda$ be a partition and $n$ a positive integer. We show it is possible to get from any $\quinv$-free filling $\sigma\in\Tab(\lambda,n)$ to the filling of $\dg(\lambda)$ whose content is all $n$'s, which we will call $\sigma^{\max}$. 

We show this by constructing a sequence of $\quinv$-free fillings $\sigma=\sigma_0 \prec \sigma_1 \prec \cdots \prec \sigma_m=\sigma^{\max}$, such that $\sigma_k=R'_{u_k}(\sigma_{k-1})$ for some $u_k$ such that $\down(\sigma_{k-1},u_{k})=0$. We define the dominance order $\succ$ on fillings as follows. For two fillings $\tau,\nu\in\Tab(\lambda,n)$, let $u=(r,j)$ be the bottom-most, left-most cell for which $\tau(u)\neq \nu(u)$. we say $\tau\succ \nu$ if $\sum_{j}\tau(r,j) >\sum_j \nu(r,j)$, i.e.~the sum of the entries in row $r$ is greater in $\tau$ than in $\nu$.

Begin by setting $\sigma_0:=\sigma$. Sequentially for $k=1,2,\ldots$, let $u_k=(r,c)$ be the bottom-most, left-most cell such that $\sigma_{k-1}(u_k)<n$. If such a cell does not exist, $\sigma_k=\sigma^{\max}$, and we are done. By construction, every cell in $\arm(u_k)$ has content $n$, so $\down(\sigma_{k-1},u_k)=0$, and so we can set $\sigma_k:=R'_{u_k}(\sigma_{k-1})$. It's a straightforward check that $\quinv(\sigma_k)=0$ and since $\sum_{j}\sigma_k(r,j) =1+\sum_j \sigma_{k-1}(r,j)$, indeed we have $\sigma_k \succ \sigma_{k-1}$. This process necessarily terminates when $\sigma_k=\sigma^{\max}$.
\end{proof}

The weight generating function for this case simplifies nicely. 
Recall that the \emph{complete homogeneous symmetric polynomial} indexed by a partition $\lambda$ is given by $h_{\lambda}(X)=\prod_j h_{\lambda_j}(X)$, where
\[
h_r(X) = \sum_{i_1\leq \cdots \leq i_r} x_{i_1}\cdots x_{i_r}.
\]
The following result is a specialization of \cref{thm:TAZRP-stat-distn} using the identity $\widetilde{H}_\lambda(x_1, \dots, x_n; 1,0)=h_{\lambda'} (x_1,\ldots,x_n)$. The identity follows from the definition of $\widetilde{H}_{\lambda}$, but we give an independent proof based on our tableau formula.

\begin{thm}
Let $\lambda$ be a partition and $n$ a positive integer. Then the weight generating function is
\begin{equation}\label{eq:little h}
\sum_{\substack{\sigma\in \Tab(\lambda,n)\\\quinv(\sigma)=0}} x^{\sigma} = h_{\lambda'} (x_1,\ldots,x_n).
\end{equation}
\end{thm}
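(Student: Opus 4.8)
The plan is to give a bijective proof directly at the level of fillings, matching $\quinv$-free fillings of $\dg(\lambda)$ with entries in $[n]$ to the monomials of $h_{\lambda'}(x_1,\dots,x_n)$. Since $\lambda'$ has parts equal to the row-lengths of $\dg(\lambda)$, we have $h_{\lambda'}(x_1,\dots,x_n)=\prod_r h_{\lambda'_r}(x_1,\dots,x_n)$ where $r$ ranges over the rows of $\dg(\lambda)$ and $\lambda'_r$ is the number of cells in row $r$; a monomial of $h_{\lambda'_r}$ is a weakly increasing word of length $\lambda'_r$ in $[n]$. So the claim reduces to showing: a filling $\sigma$ is $\quinv$-free if and only if, reading each row, the entries are forced to a canonical ``weakly increasing'' configuration once the multiset of entries in that row is fixed — that is, for each row $r$ and each choice of multiset $M_r$ of size $\lambda'_r$, there is exactly one $\quinv$-free filling of $\dg(\lambda)$ whose row $r$ has content-multiset $M_r$ for every $r$. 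Given that, the generating function $\sum_\sigma x^\sigma$ over $\quinv$-free $\sigma$ is $\prod_r \sum_{M_r} x^{M_r} = \prod_r h_{\lambda'_r}(x_1,\dots,x_n) = h_{\lambda'}(x_1,\dots,x_n)$.

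The key structural step is to analyze what $\quinv(\sigma)=0$ forces row by row. First I would observe that the rows of $\dg(\lambda)$ have weakly decreasing lengths from bottom to top, so each row sits above a weakly longer row, and the triples of \cref{def:quinv} couple a cell to the cell directly below it and a cell to the right in the row below. I would then argue by induction from the bottom row upward. For a single row in isolation (the degenerate triples), $\quinv$-freeness says that whenever $\lambda_i = r$, i.e.\ cell $(r,i)$ has nothing above it, then for all $j>i$ in that row we need $b \geq c$ where $b=\sigma(r,i)$, $c=\sigma(r,j)$ — but one must be careful that this only constrains cells at the right end of the row where columns terminate. The cleaner approach: given the bottom row's content multiset, I claim the unique $\quinv$-free arrangement is the one sorting the bottom row so that it is weakly increasing left-to-right. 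Then inductively, once rows $1,\dots,r-1$ are pinned down, the content multiset of row $r$ together with $\quinv$-freeness of all triples straddling rows $r-1$ and $r$ (plus degenerate triples within row $r$) forces a unique arrangement of row $r$. I would verify the forward direction (any $\quinv$-free filling has this canonical shape) and the reverse direction (the canonical filling built greedily is genuinely $\quinv$-free) — the reverse direction can lean on \cref{lem:irreducible0}'s construction or on a direct check that placing row $r$ ``compatibly'' with the fixed row below never creates a $\quinv$ triple.

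The main obstacle I anticipate is pinning down exactly which arrangement of a row is the unique $\quinv$-free one relative to the row below it, and proving uniqueness cleanly. Unlike the classical ``coinv'' or RSK-type settings, the queue-inversion triple condition in $\cQ$ is a cyclic/three-way condition ($a<b<c$ or $b<c<a$ or $c<b<a$ or $a=b\neq c$), so the constraint on cell $(r,j)$ depends on $\sigma(r-1,j)$ and $\sigma(r-1,j+1)$ simultaneously, and the correct canonical row is not simply ``sorted'' but rather determined by an insertion-like procedure reading the row below. I expect the right move is to describe the canonical filling explicitly via the algorithm already appearing in the proof of \cref{lem:irreducible0} (building from the bottom up, where each row added compatibly has $\quinv=0$), and then to prove uniqueness by showing that any two $\quinv$-free fillings with the same per-row content multisets must agree row by row: if they first differ in some cell $(r,j)$, one exhibits a $\quinv$ triple in one of them using the cells $(r,j),(r,j'),(r-1,?)$ and the pigeonhole fact that the two rows have equal content. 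Once uniqueness and existence are established, the factorization of the generating function is immediate, and an alternative check of the identity is to specialize \cref{thm:H} at $q=0$ using the standard fact $\widetilde H_\lambda(X;1,0)=h_{\lambda'}(X)$, though the point of the theorem is to obtain this combinatorially.
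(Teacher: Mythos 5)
Your overall strategy --- reduce everything to the claim that each choice of row-content multisets admits exactly one $\quinv$-free filling, then factor the generating function row by row into a product of $h_{\lambda'_r}$'s --- is the same as the paper's. But your induction runs in the wrong direction, and this is a genuine gap, not a presentational choice. A non-degenerate triple has one cell in the upper row and two cells in the lower row, so fixing a row lets you uniquely sort the row \emph{below} it; it does not determine the row above it, and in the unique $\quinv$-free filling the arrangement of the bottom row genuinely depends on the contents of the rows above, so it cannot be ``pinned down'' first. Concretely, take $\lambda=(2,2)$, $n=2$, with bottom-row content $\{1,2\}$. If the top row has content $\{1,1\}$, the unique $\quinv$-free filling has bottom row $(2,1)$, because $(1,1,2)\in\cQ$ (the case $a=b\ne c$) while $(1,2,1)\notin\cQ$. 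If instead the top row has content $\{1,2\}$, its degenerate triple forces the top row to be $(2,1)$, and then the bottom row is forced to $(1,2)$, since $(2,2,1)\in\cQ$ while $(2,1,2)\notin\cQ$. So with your canonical choice (bottom row weakly increasing) the pair of multisets $\{1,2\}$ below and $\{1,1\}$ above admits \emph{no} $\quinv$-free filling, and the count $h_{\lambda'}$ fails. Your base case is also off: a degenerate triple $(b,c)$ with $b$ to the left is a $\quinv$ triple iff $b<c$, so a single isolated row is $\quinv$-free iff it is weakly \emph{decreasing}, not increasing.

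The fix is the paper's top-down induction. The top row is forced to be weakly decreasing by its degenerate triples; then, given a fixed $\quinv$-free filling of rows $k,\dots,m$, there is a unique arrangement of the content of row $k-1$ killing all triples between rows $k-1$ and $k$ together with the degenerate triples inside row $k-1$ --- the paper exhibits it by a left-to-right bubble sort and gets uniqueness from the fact that swapping two distinct entries of a triple always toggles whether it is a $\quinv$ triple. Your uniqueness idea (the first cell where two fillings differ yields a $\quinv$ triple in one of them) is in the right spirit but only works once the determination order is reversed. The appeal to \cref{lem:irreducible0} does not supply the missing canonical form either: that lemma constructs a path of transitions between $\quinv$-free fillings, not a normal form for a given row content.
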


\begin{proof}
Let $\lambda$ be a partition with $\lambda'=(\lambda'_1,\ldots,\lambda'_{\ell})$. We claim that for any given set of row contents to fill the shape $\dg(\lambda)$, there is exactly one $\quinv$-free filling in $\Tab(\lambda,n)$. Assuming this, the row contents are independent with respect to each other, hence the generating function over the weights $x^{\sigma}$ of all $\quinv$-free fillings $\sigma$ is equal to the product of the sums over the weights of individual rows:
\[
\sum_{\sigma}x^{\sigma} = \prod_{j=1}^{\ell} \sum_{\sigma_j} x^{\sigma_{(j)}}
\] 
where each $\sigma_{(j)}$ represents the content of row $j$. Now, for a row of length $k$, the sum over the weights of all possible contents is $h_k(x_1,\ldots,x_n)$. Thus, taking the product over contents of rows of lengths $\lambda'_1,\ldots,\lambda'_{\ell}$, we obtain the right hand side of \eqref{eq:little h}. 

We shall now prove the above claim by showing that for any filling $\sigma$ with a given row content, it is always possible to permute the entries within the rows of $\sigma$ to obtain a $\quinv$-free filling with the same row content. Moreover, this filling is unique.

This is easy to see by induction on the number of rows. First, suppose $\dg(\lambda)$ consists of a single row with content $c=\{c_1,\ldots,c_j\}$. Then the unique $\quinv$-free filling of that row has all entries in $c$ arranged in decreasing order from left to right. Next, suppose $\dg(\lambda)$ consists of two rows with contents $c$ and $c'$ respectively, where the entries $c$ in the top row contribute zero to the total $\quinv$. Then there is a unique way to sort the entries $c'$ in the bottom row so that the $\quinv$ of the resulting tableau is 0. One way to do this is to fill the cells in the bottom row from left to right using a greedy algorithm, as follows. If there is no cell above or if the cell above is smaller than or equal to all remaining entries, choose the largest of the remaining entries to fill the cell. Otherwise, fill the cell with the largest of the remaining entries that is smaller than the cell above. With this construction, every triple in the configuration \raisebox{-10pt}{$\qtrip{$a$}{$b$}{$c$}$} within these two rows will either have $a>b\geq c$ (in the first case) or $b\geq c\geq a$ (in the second case), neither of which is a quinv triple.

For example, consider a tableau on two rows with contents $c=\{2,2,3,6\}$ and $c'=\{1,3,4,4,5, \allowbreak 5\}$ in the top and bottom rows, respectively. The unique quinv-free filling of the top row is to have all entries in decreasing order. For the bottom row, our algorithm  will produce the following $\quinv$-free filling:
\[
\tableau{6&3&2&2\\5&\ &\ &\ &\ &\ } \rightarrow \tableau{6&3&2&2\\5&1&\ &\ &\ &\ }\rightarrow \tableau{6&3&2&2\\5&1&5&\ &\ &\ }\rightarrow
\]
\[ \tableau{6&3&2&2\\5&1&5&4&\ &\ }\rightarrow \tableau{6&3&2&2\\5&1&5&4&4&\ }\rightarrow \tableau{6&3&2&2\\5&1&5&4&4&3} \]

Now, suppose $\dg(\lambda)$ has $m$ rows, and let $1< k\leq m$. For a filling $\sigma$ and some subset of rows $S\subseteq[m]$, denote the filling of the rows $S$ by $\sigma_S$. Then $\sigma_{[k-1]}$ and $\sigma_{[k,m]}$ are the fillings of the rows $1, \dots, k-1$ and $k,\ldots,m$, respectively. Suppose that the entries of $\sigma_{[k,m]}$ are already sorted so that $\quinv(\sigma_{[k,m]})=0$. Define $\quinv'(T)$ to be the contribution to $\quinv$ of all triples except for the degenerate triples in the topmost row. Then we may write 
\[\quinv(\sigma)=\quinv'(\sigma_{[k-1]})+\quinv'(\sigma_{\{k-1,k\}}).\]
We can now apply the argument in the paragraph above for two-row fillings to sort row $k-1$ so as to arrive at the unique filling $\sigma_{\{k-1,k\}}'$ such that $\quinv'(\sigma_{\{k-1,k\}}')=0$. Letting $\sigma'$ be the filling $\sigma$ with row $k-1$ sorted accordingly, we can then write 
\[\quinv(\sigma')=\quinv'(\sigma'_{[k-2]})+\quinv'(\sigma'_{\{k-2,k-1\}}),\]
since $\quinv(\sigma'_{[k-1,m]})=\quinv'(\sigma_{[k,m]})+\quinv'(\sigma_{\{k-1,k\}})=0$. Continuing this process of sorting the rows from top to bottom until we reach row 1, we end up with a filling with the same row content of $\sigma$, and $\quinv$ equal to 0.
\end{proof}

\section{Partition function}\label{sec:partition}
Both the tableau Markov chain and the mTAZRP itself are ergodic finite-state-space Markov chains with parameters $t$ and $x_1, \dots, x_n$. 

For such a chain, suppose that we can write the stationary probability of each state $s$ as $\pi(s)=w(s)/Z$, 
where $w(s)$ is a polynomial in $t, x_1, \dots, x_n$ with integer coefficients. It follows that $Z=\sum_s w(s)$. Then we say that $Z$ is a \emph{partition function} for the chain. 

We say that the partition function $Z$ is \emph{reduced} if $\gcd_s w(s)=1$ (i.e.\ if the stationary weights $w$ are in their lowest
terms).

From \cref{thm:maintheorem},
we know that the tableau Markov chain 
on $\Tab(\lambda, n)$ 
has stationary probabilities proportional to the weights 
$\wt(\sigma)=t^{\quinv(\sigma)}x^{\sigma}$,
and hence has partition function 
$\sum_{\sigma\in\Tab(\lambda, n)} \wt(\sigma)=
\widetilde{H}_\lambda(X;1,t)$.
In fact, each of these stationary weights 
$\wt(\sigma)$ is a monomial, and their greatest common
denominator is $1$
(e.g.\ for $n\geq 2$,
the monomials $x_1^{|\lambda|}$ and $x_2^{|\lambda|}$ both occur as stationary weights).
Thus we have the following:
\begin{thm}
\label{thm:pf}
The partition function
\[
Z_{\Tab}(X;t) := \widetilde{H}_\lambda(X;1,t)
\]
of the tableau Markov chain on $\Tab(\lambda,n)$ 
is reduced. 
\end{thm}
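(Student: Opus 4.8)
The plan is to verify the two conditions in the definition of a reduced partition function: first, that $\widetilde{H}_\lambda(X;1,t)$ really is a valid partition function for the tableau Markov chain (i.e.\ it equals $\sum_s w(s)$ where the stationary probability of each state $s$ is $w(s)/Z$ with $w(s)\in\mathbb{Z}[t,x_1,\dots,x_n]$), and second, that $\gcd_s w(s)=1$. For the first point I would simply invoke \cref{thm:maintheorem}: the tableau Markov chain on $\Tab(\lambda,n)$ has stationary distribution proportional to $\wt(\sigma)=t^{\quinv(\sigma)}x^{\sigma}$, each of which is a monomial in $\mathbb{Z}[t,x_1,\dots,x_n]$, so the normalizing constant is $\sum_{\sigma\in\Tab(\lambda,n)}\wt(\sigma)$, which by \cref{thm:H} (specialized at $q=1$) equals $\widetilde{H}_\lambda(X;1,t)$. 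This identifies $Z_{\Tab}(X;t)=\widetilde{H}_\lambda(X;1,t)$ as a partition function with $w(\sigma)=\wt(\sigma)$.

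For the second point, I need $\gcd_{\sigma\in\Tab(\lambda,n)}\wt(\sigma)=1$. Since each $\wt(\sigma)$ is a monomial, their gcd is the monomial $t^{\min_\sigma\quinv(\sigma)}\prod_{i=1}^n x_i^{\min_\sigma (\text{exponent of }x_i)}$. It therefore suffices to exhibit, for each variable, a filling in which that variable does not appear, together with a filling with $\quinv=0$. The all-$1$'s filling $\tau_0$ (used in the proof of \cref{prop:tab process irred}) has $\quinv(\tau_0)=0$ and weight $x_1^{|\lambda|}$; more generally, for any fixed $a\in[n]$ the constant filling with all entries equal to $a$ is $\quinv$-free (every triple has a repeated or otherwise non-$\cQ$ pattern — concretely $a<a$ fails and $a=b=c$ is not in $\cQ$), and has weight $x_a^{|\lambda|}$, which involves no variable $x_i$ for $i\neq a$ and no power of $t$. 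So for every $i\in[n]$ there is a filling whose weight is not divisible by $x_i$ (take $a\neq i$, possible since $n\geq 2$; if $n=1$ the statement is vacuous as $\widetilde H_\lambda(x_1;1,t)=x_1^{|\lambda|}$ is already a monomial and trivially reduced), and the weight $x_1^{|\lambda|}$ is not divisible by $t$. Hence the gcd of all the monomials $\wt(\sigma)$ is $1$.

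The only subtlety — and it is minor — is to make sure the constant fillings are genuinely $\quinv$-free: one checks directly from \eqref{cQdef} that if $a=\sigma(r+1,i)$, $b=\sigma(r,i)$, $c=\sigma(r,j)$ are all equal (or $a$ absent and $b=c$ in the degenerate case), then $(a,b,c)\notin\cQ$, since $\cQ$ requires either a strict chain among three distinct values or the pattern $a=b\neq c$. Thus no triple of a constant filling is a quinv triple, giving $\quinv=0$ and confirming that $x_a^{|\lambda|}$ occurs as a stationary weight for each $a$. With these monomials in hand the gcd computation is immediate, and the theorem follows. I do not anticipate any real obstacle here; the content is entirely in \cref{thm:maintheorem} and \cref{thm:H}, which are already established.
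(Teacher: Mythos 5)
Your proposal is correct and follows essentially the same route as the paper: the paper likewise combines \cref{thm:maintheorem} with the fact that each stationary weight is a monomial, and observes that the constant fillings give weights $x_1^{|\lambda|}$ and $x_2^{|\lambda|}$ (for $n\geq 2$), whose gcd is $1$. Your write-up merely spells out the quinv-freeness of constant fillings in more detail than the paper's one-line parenthetical.
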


At $q=1$, the modified Macdonald polynomials factorize and lose much of their complexity. Here is a summary of some of the properties at $q=1$:
\begin{enumerate}

\item $\widetilde{H}_{\lambda}(X;1,t)=\widetilde{H}_{\lambda'}(X;t,1)$

\item 
\label{it:Ht-factor}
If $\lambda=(\lambda_1,\ldots,\lambda_k)$, then
\begin{equation}
\label{eq:Ht-factor}
\widetilde{H}_{\lambda}(X;1,t) = \prod_{j=1}^{\ell(\lambda')} \widetilde{H}_{\big\langle 1^{\lambda'_j} \big\rangle}(X;1,t).
\end{equation}

\item 
\label{it:Ht-monomialsum}
Recall that $m_{\mu}(X)$ is the monomial symmetric function indexed by $\mu$ given by
\[
m_\mu(X) = \sum_\alpha x_1^{\alpha_1}x_2^{\alpha_2}\cdots,
\]
where the sum runs over weak compositions $\alpha = (\alpha_1, \alpha_2, \dots)$ that sort to $\mu$.
If $r$ is a positive integer, $\widetilde{H}_{\langle 1^r \rangle}$ expands in terms of monomial symmetric functions as
\begin{equation}
\label{eq:Ht-monomialsum}
\widetilde{H}_{\langle 1^r \rangle}(X;1,t) = \sum_{\mu \vdash r} \qbinom {r}{\mu}_t m_{\mu}(X),
\end{equation}
where $\mu\vdash r$ represents the set of partitions $\mu$ whose parts sum to $r$ and the notation $\qbinom {r}{\mu}_t$ stands for the $t$-multinomial coefficient $\qbinom {r}{\mu_1,\ldots,\mu_{\ell} }_t$ for $\mu=(\mu_1,\ldots,\mu_{\ell})$. This follows because for fillings $\sigma$ with content $\mu$ of a single row, $\maj(\sigma)=0$, and so the weight generating function is simply the inversion generating function over words with content $\mu$.

Thus, we have that 
\[
Z_{\Tab} = \prod_{j=1}^k  \sum_{\mu \vdash \lambda_j} \qbinom {\lambda_j}{\mu}_t m_{\mu}. 
\]

\item 
\label{it:Ht-recursion}
As a consequence of the monomial expansion in \cref{it:Ht-monomialsum}, we can expand the modified Macdonald polynomial in powers of $x_1$ as
\begin{align}
\widetilde{H}_{\langle 1^r \rangle}(x_1,\ldots,x_n;1,t) &= \sum_{i=1}^r \qbinom{r}{i}x_1^i \sum_{\mu \vdash r-i} \qbinom{r-i}{\mu}_t m_{\mu}(x_2,\ldots,x_n) \nonumber\\
&= \sum_{i=0}^r\qbinom {r}{i}_t x_1^i \widetilde{H}_{\langle 1^{r-i} \rangle}(x_2,\ldots,x_n;1,t).\label{eq:expand}
\end{align}

\end{enumerate}

Suppose $\lambda = \langle i_1^{m_1}, i_2^{m_2}, \dots, i_j^{m_j} \rangle$, 
where $i_1 < i_2 < \cdots < i_j$ and $m_j > 0$ for all $j$. Then, we define the 
\emph{compressed partition} 
$\lambdac$ to be the partition $\lambdac = \langle 1^{m_1}, 2^{m_2}, \dots, j^{m_j} \rangle$. 
If $\lambda$ has contiguous parts starting from $1$, 
then clearly $\lambdac = \lambda$.
Also, notice that the conjugate of $\lambdac$ is a strict partition. Analogously, we also define $w^c\in \TAZRP(\lambda^c,n)$ to be the compressed composition naturally corresponding to $w\in \TAZRP(\lambda,n)$. 
The following result easily follows from the fact that  
every row-length of $\lambda^c$ is also a row-length 
of $\lambda$:

\begin{prop}
For any partition $\lambda$, $\widetilde{H}_{\lambda}(X;1,t)$ is divisible by $\widetilde{H}_{\lambdac}(X;1,t)$.
\end{prop}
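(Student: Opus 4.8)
The plan is to deduce the statement entirely from the $q=1$ factorization \eqref{eq:Ht-factor}. That identity writes $\widetilde{H}_{\lambda}(X;1,t)$ as a product of factors of the form $\widetilde{H}_{\langle 1^{r} \rangle}(X;1,t)$, one for each column of $\dg(\lambda)$, indexed by the parts of the conjugate partition $\lambda'$. So it suffices to show that the multiset of parts of $(\lambdac)'$ is a sub-multiset of the multiset of parts of $\lambda'$; granting this, every factor occurring in $\widetilde{H}_{\lambdac}(X;1,t)$ occurs at least as often in $\widetilde{H}_{\lambda}(X;1,t)$, so the quotient is again a product of such factors and in particular a polynomial.

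First I would fix notation: write $\lambda = \langle i_1^{m_1}, \dots, i_j^{m_j} \rangle$ with $1 \le i_1 < \cdots < i_j$ and all $m_k > 0$, and set $i_0 := 0$. A direct count gives $\lambda'_r = m_k + m_{k+1} + \cdots + m_j =: M_k$ whenever $i_{k-1} < r \le i_k$; hence $\lambda'$ consists of the part $M_k$ repeated exactly $i_k - i_{k-1}$ times, for $k = 1, \dots, j$, and nothing else. The same count applied to $\lambdac = \langle 1^{m_1}, 2^{m_2}, \dots, j^{m_j} \rangle$ gives $(\lambdac)'_k = M_k$ for $1 \le k \le j$, so $(\lambdac)' = (M_1, M_2, \dots, M_j)$ with each $M_k$ occurring exactly once (and $M_1 > M_2 > \cdots > M_j > 0$, which reproves that $(\lambdac)'$ is strict). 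Since $i_k - i_{k-1} \ge 1$ for every $k$, the parts of $(\lambdac)'$ form a sub-multiset of the parts of $\lambda'$, as required.

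Substituting into \eqref{eq:Ht-factor} for both partitions gives
\[
\widetilde{H}_{\lambda}(X;1,t) = \prod_{k=1}^{j} \widetilde{H}_{\langle 1^{M_k} \rangle}(X;1,t)^{\,i_k - i_{k-1}}, \qquad \widetilde{H}_{\lambdac}(X;1,t) = \prod_{k=1}^{j} \widetilde{H}_{\langle 1^{M_k} \rangle}(X;1,t),
\]
and therefore
\[
\frac{\widetilde{H}_{\lambda}(X;1,t)}{\widetilde{H}_{\lambdac}(X;1,t)} = \prod_{k=1}^{j} \widetilde{H}_{\langle 1^{M_k} \rangle}(X;1,t)^{\,i_k - i_{k-1} - 1}.
\]
By \eqref{eq:Ht-monomialsum} each factor on the right lies in $\NN[t][x_1,\dots,x_n]$, and all exponents $i_k - i_{k-1} - 1$ are nonnegative, so the right-hand side is a genuine polynomial with coefficients in $\NN[t]$. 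This proves the divisibility, in the strong form that $\widetilde{H}_{\lambda}(X;1,t)/\widetilde{H}_{\lambdac}(X;1,t) \in \NN[t][x_1,\dots,x_n]$.

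I do not expect any real obstacle here beyond keeping the conjugate-partition bookkeeping of the second paragraph straight; once \eqref{eq:Ht-factor} is in hand the result is essentially immediate. The one point I would flag for the write-up is the coefficient ring in which divisibility is asserted: the argument above yields it already in $\NN[t][x_1,\dots,x_n]$ (the quotient being an explicit product of specialized modified Macdonald polynomials), which is stronger than divisibility merely in $\mathbb{Q}(t)[x_1,\dots,x_n]$.
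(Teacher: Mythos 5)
Your proof is correct and follows essentially the same route as the paper, which derives the result from the $q=1$ factorization \eqref{eq:Ht-factor} together with the observation that every row length of $\lambdac$ is also a row length of $\lambda$ (your careful multiset bookkeeping, using that $(\lambdac)'$ is strict, makes this precise). The explicit formula for the quotient as $\prod_k \widetilde{H}_{\langle 1^{M_k}\rangle}(X;1,t)^{\,i_k-i_{k-1}-1}$ is consistent with the paper's worked example for $\lambda=(7,4,4,2,2,2,1)$.
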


For example, let $\lambda = (7,4,4,2,2,2,1)$ so that
\[
\widetilde{H}_{\lambda}(X;1,t) = \widetilde{H}_{(1)}(X;1,t)^3
\widetilde{H}_{(1^3)}(X;1,t)^2 \widetilde{H}_{(1^6)}(X;1,t)^1 \widetilde{H}_{(1^7)}(X;1,t)^1.
\]
Then $\lambdac = (4,3,3,2,2,2,1)$ and 
\[
\widetilde{H}_{\lambdac}(X;1,t) = 
\widetilde{H}_{(1)}(X;1,t)
\widetilde{H}_{(1^3)}(X;1,t) \widetilde{H}_{(1^6)}(X;1,t) \widetilde{H}_{(1^7)}(X;1,t).
\]

For the mTAZRP chain on a compressed partition, we have verified the following analog of \cref{thm:pf} for many partitions and system sizes, and suspect it is true in general.
Note that such a result does not hold for the multispecies ASEP~\cite{martin-2020}.

\begin{conj}
\label{conj:compressed}
Suppose $\lambda = \lambdac$. Then the partition function of the mTAZRP chain on $\TAZRP(\lambda, n)$ given by $\widetilde{H}_{\lambda}(X;1,t)$ is reduced.
\end{conj}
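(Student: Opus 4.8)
Write $\lambda'=(c_1>c_2>\dots>c_\ell)$, which is a \emph{strict} partition since $\lambda=\lambdac$, and recall from \eqref{eq:Ht-factor} that $\widetilde H_\lambda(X;1,t)=\prod_{m=1}^{\ell}\widetilde H_{\langle 1^{c_m}\rangle}(X;1,t)$. By \cref{thm:TAZRP-stat-distn} the stationary weight of a configuration $w$ is $W(w):=\sum_{\proj(\sigma)=w}\wt(\sigma)\in\NN[t;x_1,\dots,x_n]$ and the partition function is $\widetilde H_\lambda(X;1,t)$; the assertion to prove is that $g:=\gcd_w W(w)=1$ in the unique factorisation domain $\mathbb Q[x_1,\dots,x_n,t]$ (we assume $n\ge2$, which is the relevant case). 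The plan is to combine a lumping bound with explicit state weights.

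The main tool would be lumping. Arguing as in the proof of \cref{prop:colouring} (ignore the $c_m-c_{m+1}$ weakest species, where $c_{\ell+1}:=0$, and relabel all remaining species as one), for each $m\in\{1,\dots,\ell\}$ the mTAZRP on $\TAZRP(\lambda,n)$ lumps via a projection $L_m$ onto the single-species TAZRP on $\TAZRP(\langle 1^{c_m}\rangle,n)$. Since the mTAZRP is ergodic, its stationary distribution pushes forward under $L_m$ to that of the image chain; applying \cref{thm:TAZRP-stat-distn} to $\lambda$ and to $\langle 1^{c_m}\rangle$, and using $\widetilde H_\lambda/\widetilde H_{\langle 1^{c_m}\rangle}=\prod_{m'\ne m}\widetilde H_{\langle 1^{c_{m'}}\rangle}$, one obtains
\[
\sum_{w\,:\,L_m(w)=y}W(w)=\Big(\prod_{m'\ne m}\widetilde H_{\langle 1^{c_{m'}}\rangle}(X;1,t)\Big)\,W^{(m)}(y)\qquad\text{for all }y,
\]
where $W^{(m)}$ are the single-species stationary weights. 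By \cref{prop:single-species-prob} one has $W^{(m)}(\eta)=\qbinom{c_m}{\eta}_t\prod_i x_i^{\eta_i}$, so $\gcd_y W^{(m)}(y)=\gcd(x_1^{c_m},x_2^{c_m})=1$. Since $g\mid W(w)$ for every $w$, it follows that $g$ divides $\prod_{m'\ne m}\widetilde H_{\langle 1^{c_{m'}}\rangle}$ for each $m$, hence
\begin{equation}\label{eq:proposal-bound}
g\ \Big|\ \gcd_{1\le m\le\ell}\ \prod_{m'\ne m}\widetilde H_{\langle 1^{c_{m'}}\rangle}(X;1,t).
\end{equation}

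The right-hand side of \eqref{eq:proposal-bound} is $1$ precisely when the polynomials $\widetilde H_{\langle 1^{c_1}\rangle}(X;1,t),\dots,\widetilde H_{\langle 1^{c_\ell}\rangle}(X;1,t)$ are pairwise coprime, and in that case the conjecture follows at once; this already covers a wide range of shapes. To attack such coprimality one can use that each $\widetilde H_{\langle 1^r\rangle}(X;1,t)$ is homogeneous of $x$-degree $r$, has $x$-content $1$ (the coefficient $\qbinom{r}{(r)}_t=1$ of $x_1^r$), and specialises to $h_r(X)$ at $t=0$ and to $e_1(X)^r$ at $t=1$. However, coprimality genuinely fails for small $n$: for $n=2$ one computes $\widetilde H_{\langle 1^r\rangle}(x_1,x_2;1,t)\big|_{x_1=-x_2}=0$ for every odd $r$, so $x_1+x_2$ divides $\widetilde H_{\langle 1^r\rangle}(x_1,x_2;1,t)$ for all odd $r$. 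For instance, for $\lambda=(2,1,1)$ and $n=2$ one has $\widetilde H_{\langle 1^3\rangle}=(x_1+x_2)\bigl(x_1^2+(t+t^2)x_1x_2+x_2^2\bigr)$ and \eqref{eq:proposal-bound} only yields $g\mid x_1+x_2$.

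To close the remaining cases one must supplement \eqref{eq:proposal-bound} with configurations whose weights avoid the surviving common factors. A natural candidate family is the \emph{clustered} configurations: all particles at one site except a single particle of the largest species at a neighbouring site. For such $w$ the preimage under $\proj$ is small, so the tableau formula lets one compute $W(w)$ explicitly; e.g.\ for $\lambda=(2,1,1)$, $n=2$ the configuration $(11\,|\,2)$ has $W(w)=x_1^2x_2\,(x_1+t^2x_2)$, which is coprime to $x_1+x_2$, so together with $g\mid x_1+x_2$ we conclude $g=1$. \textbf{The main obstacle} is to carry out this last step uniformly: to exhibit, for a general compressed $\lambda$ and arbitrary $n$, a finite family of configurations whose weights together share no factor with any prime dividing more than one of the $\widetilde H_{\langle 1^{c_m}\rangle}$. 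Equivalently, one wants a structural description of exactly when two single-column modified Macdonald polynomials $\widetilde H_{\langle 1^a\rangle}(X;1,t)$, $\widetilde H_{\langle 1^b\rangle}(X;1,t)$ share an irreducible factor, together with a proof that the mTAZRP stationary weights never all inherit it; the cyclic symmetry of the model and the top-row consistency property of \cref{sec:top} should help organise such an argument.
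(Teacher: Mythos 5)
The first thing to say is that the paper does not prove this statement. It appears as \cref{conj:compressed}, introduced with the sentence that the authors ``have verified [it] for many partitions and system sizes, and suspect it is true in general''; the only proved analogue is \cref{thm:pf}, which concerns the \emph{tableau} chain rather than the mTAZRP (and is easy there because each tableau weight is a single monomial). So there is no proof of the paper's to compare yours against, and your proposal must be judged as an attempt at an open problem --- which, to your credit, is exactly how you present it.

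The portion you do carry out is correct and is genuine partial progress. The lumping bound is sound: projecting onto the species $\geq m$ as in \cref{prop:colouring} and pushing the stationary distribution forward gives
\[
\sum_{w\,:\,L_m(w)=y}W(w)\;=\;\frac{\widetilde{H}_{\lambda}(X;1,t)}{\widetilde{H}_{\langle 1^{c_m}\rangle}(X;1,t)}\,W^{(m)}(y),
\]
and since $\gcd_y W^{(m)}(y)=1$ by \cref{prop:single-species-prob}, the defect $g=\gcd_w W(w)$ divides $\prod_{m'\neq m}\widetilde{H}_{\langle 1^{c_{m'}}\rangle}$ for every $m$, hence divides the gcd of these products. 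This reduces the conjecture to controlling the common irreducible factors of the single-column polynomials $\widetilde{H}_{\langle 1^{a}\rangle}(X;1,t)$, and settles it outright whenever they are pairwise coprime. Your counterexample to coprimality at $n=2$ is right ($\sum_i(-1)^i\qbinom{r}{i}_t=0$ for odd $r$, so $x_1+x_2\mid\widetilde{H}_{\langle 1^r\rangle}(x_1,x_2;1,t)$), and I checked your computation $W((11|2))=x_1^2x_2(x_1+t^2x_2)$ for $\lambda=(2,1,1)$, $n=2$: the two contributing fillings have weights $x_1^3x_2$ and $t^2x_1^2x_2^2$, so that case does close. The genuine gap is the one you name yourself: you have neither a description of when $\widetilde{H}_{\langle 1^a\rangle}$ and $\widetilde{H}_{\langle 1^b\rangle}$ share an irreducible factor for general $n$, nor a uniform family of configurations whose weights avoid every such shared factor. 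Without that, the argument proves the conjecture only in the (uncharacterised) cases where the single-column factors are pairwise coprime, plus examples done by hand; the conjecture as a whole remains open after your proposal, just as it does in the paper. If you pursue this, note also that your route would in any case only re-prove reducedness of $\widetilde{H}_{\lambda}(X;1,t)$ as a partition function; the paper's surrounding discussion (\cref{prop:reduced pf} and \cref{conj:refined pf}) suggests the authors would additionally value a combinatorial, tableau-level explanation, which your probabilistic lumping step does not supply.
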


Now, we consider the mTAZRP chain on $\TAZRP(\lambda, n)$ for a generic partition $\lambda$. In this case, we see that the particle types of $\lambda$ might as well be those of $\lambdac$ since only the relative orderings of the particle types matter. Thus the stationary weights on $\TAZRP(\lambda,n)$ are proportional to the corresponding stationary weights on $\TAZRP(\lambdac,n)$. We therefore have the following result:
\begin{prop}
\label{prop:reduced pf}
Fix $\lambda$ and $n$. For each state $w\in\TAZRP(\lambda,n)$, the sum over stationary weights
\[
\wt(w)=\sum_{\substack{\sigma\in\Tab(\lambda,n)\\\proj(\sigma)=w}}  \wt(\sigma)
\]
is divisible by the constant $\widetilde{H}_{\lambda}(x_1,\ldots,x_n;1,t)/ \widetilde{H}_{\lambdac}(x_1,\dots,x_n;1,t)$,
and the mTAZRP on $\TAZRP(\lambda, n)$ has a partition function 
given by
\begin{equation}\label{eq:ZT}
Z_{\TAZRP(\lambda,n)} = \widetilde{H}_{\lambdac}(x_1,\dots,x_n;1,t).
\end{equation}
\end{prop}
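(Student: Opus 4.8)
The plan is to deduce everything from \cref{thm:TAZRP-stat-distn}, \cref{thm:pf}, and the factorization \eqref{eq:Ht-factor}, together with the observation made just before the statement that the stationary weights on $\TAZRP(\lambda,n)$ depend on $\lambda$ only through the relative order of its parts, hence are literally the same polynomials as the stationary weights on $\TAZRP(\lambdac,n)$ after the relabelling $w\mapsto w^c$. First I would make this last point precise: the rate function of the mTAZRP at a site depends on the multiset of particles there only via the numbers $c_r$ and $d_r$ of particles of type $r$ and of type greater than $r$, and these are unchanged under the order-preserving bijection between the part-values of $\lambda$ and those of $\lambdac$. Therefore the map $w\mapsto w^c$ is an isomorphism of continuous-time Markov chains $\TAZRP(\lambda,n)\to\TAZRP(\lambdac,n)$, and the stationary weight $\wt(w)$ defined in the statement equals $\wt(w^c)$ computed in $\TAZRP(\lambdac,n)$ (up to a global normalization that we will track below). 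By \cref{thm:TAZRP-stat-distn} applied to $\lambdac$, these latter weights sum to $\mathcal Z_{(\lambdac,n)}=\widetilde H_{\lambdac}(X;1,t)$, which by \cref{thm:pf} is the reduced partition function of the $\lambdac$-chain; in particular $\gcd_{w^c}\wt(w^c)=1$.

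Next I would compare the two normalizations. By \cref{thm:TAZRP-stat-distn} applied to $\lambda$ itself, $\sum_{w}\wt(w)=\mathcal Z_{(\lambda,n)}=\widetilde H_{\lambda}(X;1,t)$. On the other hand the same stationary probabilities are obtained by scaling the $\lambdac$-weights, so there is a single polynomial $c(X;t)$ with
\[
\wt(w) \;=\; c(X;t)\,\wt(w^c) \qquad\text{for all } w\in\TAZRP(\lambda,n).
\]
Summing over $w$ and using the two identities above gives
\[
c(X;t)\;=\;\frac{\widetilde H_{\lambda}(X;1,t)}{\widetilde H_{\lambdac}(X;1,t)},
\]
which is a genuine polynomial in $\NN[t;x_1,\dots,x_n]$ by the preceding proposition (every row length of $\lambdac$ is a row length of $\lambda$, so the product formula \eqref{eq:Ht-factor} exhibits $\widetilde H_{\lambdac}$ as a factor of $\widetilde H_{\lambda}$). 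This proves the divisibility claim: each $\wt(w)$ is divisible by $\widetilde H_{\lambda}/\widetilde H_{\lambdac}$.

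Finally, to conclude that $\widetilde H_{\lambdac}(X;1,t)$ is itself a partition function for the $\lambda$-chain, I would simply observe that the stationary probability of $w$ is
\[
\frac{\wt(w)}{\widetilde H_{\lambda}(X;1,t)}
\;=\;\frac{c(X;t)\,\wt(w^c)}{c(X;t)\,\widetilde H_{\lambdac}(X;1,t)}
\;=\;\frac{\wt(w^c)}{\widetilde H_{\lambdac}(X;1,t)},
\]
so the weights $\wt(w^c)\in\NN[t;x_1,\dots,x_n]$ together with denominator $\widetilde H_{\lambdac}(X;1,t)=\sum_{w}\wt(w^c)$ realize the stationary distribution, giving \eqref{eq:ZT}. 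The main point requiring care — and the only place where anything must be checked rather than quoted — is the first step: verifying that relabelling part-values really does give an \emph{exact} isomorphism of the Markov chains (not merely of their stationary distributions), which comes down to the rate description in \cref{sec:tazrp} depending only on the counts $c_r,d_r$; once that is in hand, everything else is bookkeeping with the already-established identities $\mathcal Z_{(\lambda,n)}=\widetilde H_\lambda(X;1,t)$ and the factorization \eqref{eq:Ht-factor}.
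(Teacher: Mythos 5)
Your argument is essentially the paper's own: the paper's proof consists precisely of the observation that only the relative order of the particle types matters, so $w\mapsto w^c$ identifies the two chains and the stationary weights on $\TAZRP(\lambda,n)$ are proportional to those on $\TAZRP(\lambdac,n)$, with the proportionality constant forced to be $\widetilde H_{\lambda}/\widetilde H_{\lambdac}$ (a polynomial by the preceding proposition). You spell this out in more detail than the paper does, and correctly. One caveat: your side remark that \cref{thm:pf} gives $\gcd_{w^c}\wt(w^c)=1$ is wrong --- that theorem concerns the gcd over \emph{tableaux} weights, not over TAZRP states; reducedness of the mTAZRP partition function for compressed $\lambda$ is exactly \cref{conj:compressed}, which the paper leaves open. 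Fortunately this claim is never used in your argument (divisibility and the identity \eqref{eq:ZT} both follow without it), so the proof of the stated proposition stands.
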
 

Notice that if \cref{conj:compressed} holds, then the partition 
function (\ref{eq:ZT}) for the mTAZRP on $\TAZRP(\lambda, n)$ is
in fact reduced.

While the result of Proposition \ref{prop:reduced pf} is trivial to see probabilistically, we do not have a combinatorial proof. It would be very interesting to prove this combinatorially.

We now formulate a refined conjecture which would lead to a combinatorial proof of \cref{prop:reduced pf} in the special case when $\lambda$ is a strict partition.
Let $\lambda$ be a strict partition and let $\sigma\in\Tab(\lambdac,n)$ be a filling of 
$\dg(\lambdac)$. 
Notice that $\sigma$ can be embedded as a subfilling of $\dg{\lambda}$ in a natural way. 
For example, if $\lambda = (5,3,2)$, then $\lambdac = (3,2,1)$. Let $\sigma\in\Tab(\lambdac,3)$ be
\[
\sigma = \raisebox{8pt}{\tableau{
3 \\
3 & 2 \\
1 & 1 & 2 }}
\]
Then
$\sigma$ sits in $\dg((5,3,2))$ as a subfilling as
\[
\tableau{
\cdot \\
\cdot \\
3 & \cdot \\
3 & 2 & \cdot \\
1 & 1 & 2 }
\]
We denote the set of fillings of $\dg(\lambda)$ obtained by extending $\sigma$ by $\Ext_{\lambda}(\sigma)$ (every such filling corresponds to a filling of the skew shape $\dg(\lambda\backslash\lambdac)$). 
We can now state our conjecture refining \cref{prop:reduced pf}.

\begin{conj}
\label{conj:refined pf}
Let $\lambda$ be a strict partition, $n \in \mathbb{N}$ and let $\sigma\in\Tab(\lambdac,n)$. Then 
\[
\frac{1}{\wt(\sigma)}\sum_{T \in \Ext_{\lambda}(\sigma)} \wt(T) = \frac{\widetilde{H}_{\lambda}(x_1,\dots,x_n;1,t)}{\widetilde{H}_{\lambdac}(x_1,\dots,x_n;1,t)}.
\]
\end{conj}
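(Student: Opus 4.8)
The plan is to establish a bijection, or more precisely a weight-preserving (up to the common factor $\wt(\sigma)$) correspondence, between $\Ext_\lambda(\sigma)$ and $\Tab(\lambda\backslash\lambdac, n)$, the set of fillings of the skew shape by entries in $[n]$, *without any constraint from $\sigma$*. Concretely, I would try to show that
\[
\frac{1}{\wt(\sigma)}\sum_{T\in\Ext_\lambda(\sigma)}\wt(T)
=
\sum_{U\in\Tab(\lambda\backslash\lambdac,n)} x^U t^{\quinv_{\mathrm{skew}}(U)},
\]
for an appropriate skew-quinv statistic, and then separately identify the right-hand side with $\widetilde H_\lambda(X;1,t)/\widetilde H_{\lambdac}(X;1,t)$. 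The second identification should follow from \cref{thm:H} together with the $q=1$ factorization \eqref{eq:Ht-factor}: since $\lambda$ is strict, $\lambda'$ has all distinct parts and so does $\lambdac{}'$, and the multiset of column-lengths of $\lambda\backslash\lambdac$ is exactly the multiset difference of the column-lengths of $\lambda$ and those of $\lambdac$; thus the ratio of modified Macdonald polynomials at $q=1$ is a product of $\widetilde H_{\langle 1^r\rangle}(X;1,t)$ over the new column-lengths $r$, which is precisely a product of single-row quinv generating functions.

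The key steps, in order, would be: (i) Fix $\sigma\in\Tab(\lambdac,n)$ and analyze how $\quinv$ of an extension $T$ decomposes. Every triple of $\dg(\lambda)$ either lies entirely in $\dg(\lambdac)$, lies entirely in the skew part, or straddles the boundary. The triples inside $\dg(\lambdac)$ contribute $\quinv(\sigma)$ regardless of the extension, which is exactly the $\wt(\sigma)$ we divide out. (ii) Show that the straddling triples, summed over all of $\Ext_\lambda(\sigma)$, contribute a factor independent of $\sigma$ — this is where the strictness of $\lambda$ should be essential, because it forces the ``interface'' rows of $\dg(\lambdac)$ to be tops of columns, so the relevant triples are degenerate-type and their quinv contribution depends only on comparing skew entries against each other and against $0$, not against the actual values in $\sigma$. (iii) Conclude that the sum factors as claimed and matches the product over new column-lengths. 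Steps (i) and (iii) are largely bookkeeping; the crux is step (ii), namely proving that the dependence on the specific filling $\sigma$ below the skew shape genuinely cancels.

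I expect step (ii) to be the main obstacle. The subtlety is that a triple $\{(r+1,i),(r,i),(r,j)\}$ with $(r+1,i)$ in the skew part and $(r,i),(r,j)$ possibly in $\dg(\lambdac)$ does depend on $\sigma(r,i)$ and $\sigma(r,j)$ in general. One needs to argue that, because $\lambda$ is strict, along the staircase boundary the cells $(r,i)$ that can serve as the *middle* of a straddling triple are precisely the top cells of their columns in $\dg(\lambdac)$ (so $\South$-type constraints degenerate), and then a counting argument over the free choices of skew entries makes the $\sigma$-dependence disappear after summation — analogous in spirit to \cref{lem:updown}, where a multiset of statistics is shown to be invariant. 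A clean way to organize this might be to build the extension column by column from the boundary outward, at each step invoking a one-row lemma (as in the proof of the $t=0$ theorem, but now keeping track of $t$) showing that the quinv generating function over the newly added cells, relative to a fixed row below, equals $h$-type / Gaussian-binomial data depending only on lengths. If that local statement can be made to hold uniformly in the fixed row below — which is exactly what strictness should buy — the global identity follows by telescoping.
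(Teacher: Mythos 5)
The first thing to say is that the paper does not prove this statement: it appears only as \cref{conj:refined pf}, explicitly formulated as a conjecture. The authors state that they have no combinatorial proof even of the coarser divisibility in \cref{prop:reduced pf}, and they introduce this refinement (together with \cref{conj:compressed}) precisely because a proof of it \emph{would} yield one; they report only experimental verification. So your submission must be judged as an attack on an open problem, and as such it does not close it. Your steps (i) and (iii) are indeed bookkeeping, but your step (ii) is not a reduction of the conjecture to something simpler --- it \emph{is} the conjecture, and you yourself flag it as ``the main obstacle'' without resolving it. A plan whose crux is ``one needs to argue that the $\sigma$-dependence disappears after summation'' has not proved anything.

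Beyond incompleteness, two specific assertions in step (ii)--(iii) are wrong and would derail the argument as outlined. First, for strict $\lambda$ there are \emph{no} degenerate triples at all (a degenerate triple requires two columns of equal height), and the straddling triples are genuinely sensitive to the values of $\sigma$, not merely to comparisons of skew entries ``against each other and against $0$.'' A triple $\{(\lambdac_i+1,i),(\lambdac_i,i),(\lambdac_i,j)\}$ has its middle cell at the top of column $i$ of $\dg(\lambdac)$ and its quinv status depends on the entry $\sigma(\lambdac_i,i)$; worse, there are straddling triples with \emph{two} cells inside $\dg(\lambdac)$ and only the rightmost cell in the skew part. For $\lambda=(5,3,2)$, $\lambdac=(3,2,1)$, the triple $\{(3,1),(2,1),(2,3)\}$ has $(3,1),(2,1)\in\dg(\lambdac)$ while $(2,3)$ lies in the skew shape, so its contribution depends on the interior entries $\sigma(3,1)$ and $\sigma(2,1)$. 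The cancellation you need is therefore global in $\sigma$, not localized at tops of columns. Second, the identification of $\widetilde{H}_{\lambda}/\widetilde{H}_{\lambdac}$ with a product of independent single-row generating functions over the rows of $\dg(\lambda\backslash\lambdac)$ cannot be literal: in the same example the skew shape consists of four rows of length $1$, whose independent-row generating function is $(x_1+\cdots+x_n)^4$, whereas by \eqref{eq:Ht-factor} the ratio equals $\widetilde{H}_{\langle 1^3\rangle}(X;1,t)\,\widetilde{H}_{\langle 1\rangle}(X;1,t)$, and these differ for $t\neq 1$ (the multiset of row lengths of the skew diagram is not the multiset difference of the row lengths of $\lambda$ and $\lambdac$). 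So any ``skew-quinv'' realizing the right-hand side must couple skew cells across different rows through the broken columns, and the column-by-column telescoping lemma you hope for is exactly the open combinatorial identity, not a known tool like \cref{lem:updown}.
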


We now show that a proof of \cref{conj:compressed,conj:refined pf} will give a combinatorial proof of \cref{prop:reduced pf}. First, note that it suffices to show that
\[
\gcd_{w \in \TAZRP(\lambda,n)} \wt(w) = \frac{\widetilde{H}_{\lambda}(x_1,\dots,x_n;1,t)}{\widetilde{H}_{\lambdac}(x_1,\dots,x_n;1,t)}.
\]
The stationary probability of a configuration $w \in \TAZRP(\lambda,n)$ is given according to \cref{lem:lumping} by summing over the weights of fillings of $\sigma\in\Tab(\lambda,n)$, where $\sigma$ ranges over all fillings with a fixed bottom row $b$ such that $\proj(b)=w$. The same is true for $w^{\text{c}} \in \TAZRP(\lambdac,n)$, with the same $b$ because $\proj(T)=\proj(\sigma)$ for any $T\in\Ext_{\lambda}(\sigma)$, as the map $f$ depends only on the bottom row of a filling, and the bottom row of $T$ is equal to that of $\sigma$ by definition. 

We then have
\[
\wt(w) = \sum_{\substack{\sigma \in \Tab(\lambda,n) \\ \proj(\sigma)=w}} \wt(\sigma).
\]
We now refine the above by summing over fillings $\dg(\lambdac)$ with the same bottom row:
\begin{align*}
\wt(w) =& \sum_{\substack{\sigma \in \Tab(\lambdac,n) \\ \proj(\sigma)=w}} \sum_{T \in \Ext_{\lambda}(\sigma)} \wt(T), \\
=& \sum_{\substack{\sigma \in \Tab(\lambdac,n) \\ \proj(\sigma)=w}} \wt(\sigma) \frac{\widetilde{H}_{\lambda}(x_1,\dots,x_n;1,t)}{\widetilde{H}_{\lambdac}(x_1,\dots,x_n;1,t)}, \\
=& \wt(w^{\text{c}}) \frac{\widetilde{H}_{\lambda}(x_1,\dots,x_n;1,t)}{\widetilde{H}_{\lambdac}(x_1,\dots,x_n;1,t)},
\end{align*}
where we have used \cref{conj:refined pf} in the second line.
Now, \cref{conj:compressed} completes the argument.

\begin{example}
Let $\lambda = (3,2)$ so that $\lambdac = (2,1)$ with $n = 2$. Consider the following tableau in $\sigma\in\Tab(\lambdac,2)$, with $\proj(\sigma)=(1|2) \in\TAZRP(\lambdac,2)$:
\[
\sigma = \tableau{2\\2&1}
\]
Using \cref{thm:maintheorem0}, we have $\wt(\sigma)=t x_1 x_2^2$.
The tableaux and weights in $\Ext(\sigma)$ are shown in the following table:
\[
\begin{array}{c|c||c|c||c|c||c|c}
\text{Filling} & \text{Weight} & \text{Filling} & \text{Weight} & \text{Filling} & \text{Weight} & \text{Filling} & \text{Weight} \\
\hline&&&&&& \\[-1.5ex]
\tableau{1\\2&1\\2&1} & t x_1^3 x_2^2 & 
\tableau{1\\2&2\\2&1} & t x_1^2 x_2^3 & 
\tableau{2\\2&1\\2&1} & t^2 x_1^2 x_2^3 & 
\tableau{2\\2&2\\2&1} & t x_1 x_2^4 
\end{array}
\]
The total contribution of these fillings to the weight of $(2|3)\in\TAZRP(\lambda,2)$ is
\[
\wt((2|3))=t x_1 x_2^2 \left(x_1^2 + (1+t) x_1 x_2 + x_2^2\right),
\]
where the prefactor is equal to $\wt(\sigma)$ and the factor in parenthesis is precisely $\frac{\widetilde{H}_{(3,2)}}{\widetilde{H}_{(2,1)}} = 
\widetilde{H}_{(1,1)}(x_1,x_2;1,t)$, in accordance with \cref{conj:refined pf}.
\end{example}

\section{Observables}
\label{sec:obs}

\subsection{Symmetries in probabilities of configurations in the first $\ell$ sites}
\label{sec:symmetry}

Fix a partition $\lambda$ and a positive integer $n$, and consider the TAZRP of type $\lambda$ on $n$ sites. Fix $\ell$ with 
$1\leq \ell\leq n-1$. We will consider the probability 
of observing some specific configuration $w$ on the sites $1,\dots, \ell$. 
Formally, $w$ can be seen as a configuration of TAZRP$(\mu, \ell)$ 
for some partition $\mu$ consisting of a subset of the parts of 
$\lambda$. 

Now we write $\overline{w}$ for the set of configurations
of TAZRP$(\lambda,n)$ whose restriction to $1,\dots,\ell$ is given
by $w$.

\begin{example}
\label{ex:w} 
Let $\lambda=(2,2,1,1)$, $n=4$, $\ell=2$, and $w=(1|2)$. (Then  $\mu=(2,1)$). The configurations of $\TAZRP(\lambda,n)$ contributing to $\mathbb{P}_{\lambda,n}(\overline{w})$ are
\[
\{(1|2|21|\cdot), (1|2|2|1), (1|2|1|2), (1|2|\cdot|21)\}.
\]
In this case, the coefficient of $t^3$ in $P_{\lambda,n}(\overline{w})$ is $x_1x_2(x_2^2+x_1x_2)m_2 + (x_1+2x_2)m_3 + (4x_1x_2+3x_2^2)m_{11} + (3x_1+4x_2)m_{21} +  m_{31} + 2m_{22})/\widetilde{H}_{\lambda}(x_1,x_2,x_3,x_4;1,t)$, where $m_{\mu}=m_{\mu}(x_3,x_4)$ is the monomial basis evaluated at the variables $x_3,x_4$.
\end{example}

The main result we prove in this section is the following.

\begin{prop}
\label{thm:symm probs}
Fix a partition $\lambda$ and positive integer $n$.
For any $0 \leq \ell\leq n$ and configuration $w$ on the first $\ell$ sites of the TAZRP of type $\lambda$ on $n$ sites, 
the probability 
$\mathbb{P}_{\lambda,n}(\overline{w})$
is symmetric in the variables $\{x_{\ell+1},\ldots,x_n\}$.
\end{prop}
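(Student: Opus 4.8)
The plan is to pass to the tableau side via \cref{thm:TAZRP-stat-distn}. By that theorem,
\[
\mathbb{P}_{\lambda,n}(\overline{w}) \;=\; \frac{1}{\widetilde{H}_\lambda(x_1,\dots,x_n;1,t)}\sum_{\substack{\sigma\in\Tab(\lambda,n)\\ \proj(\sigma)\in\overline{w}}}\wt(\sigma).
\]
Since $\widetilde{H}_\lambda(X;1,t)$ is a symmetric polynomial (it is even Schur-positive), it is enough to show that the numerator $N_w:=\sum_{\sigma\,:\,\proj(\sigma)\in\overline{w}}\wt(\sigma)$ is symmetric in $\{x_{\ell+1},\dots,x_n\}$; and since the symmetric group on $\{\ell+1,\dots,n\}$ is generated by the adjacent transpositions, it suffices to prove $s_i N_w=N_w$ for $s_i=(i,i+1)$ with each $\ell+1\le i\le n-1$.

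Fix such an $i$. The key point is that the event $\proj(\sigma)\in\overline{w}$ is a condition on the bottom row of $\sigma$ alone, and moreover only on those bottom cells whose content lies in $\{1,\dots,\ell\}$ (and on the heights of their columns). I would therefore aim to produce an involution $\psi_i\colon\Tab(\lambda,n)\to\Tab(\lambda,n)$ with three properties: (a) $\psi_i$ changes only cells whose content is $i$ or $i+1$; (b) $\quinv(\psi_i(\sigma))=\quinv(\sigma)$ for every $\sigma$; (c) $x^{\psi_i(\sigma)}=s_i\bigl(x^{\sigma}\bigr)$, i.e.\ $\psi_i$ interchanges the number of cells of content $i$ with the number of cells of content $i+1$. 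Granting such a $\psi_i$: because $i,i+1\ge\ell+1$, property (a) ensures that $\psi_i$ never moves a cell between the values $\le\ell$ and the values $>\ell$, and it fixes all column heights, so $\proj(\psi_i(\sigma))$ and $\proj(\sigma)$ agree on the sites $1,\dots,\ell$; hence $\psi_i$ restricts to an involution of $\{\sigma:\proj(\sigma)\in\overline{w}\}$. By (b) and (c) this restricted involution satisfies $\wt(\psi_i(\sigma))=s_i\,\wt(\sigma)$, and summing over the set yields $s_i N_w=N_w$, as desired.

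Thus everything reduces to constructing $\psi_i$, which I expect to be the main obstacle. The map required is precisely a $\quinv$-adapted version of the Bender--Knuth / Haglund--Haiman--Loehr-type involution that establishes the symmetry in $X$ of the modified Macdonald polynomial in its combinatorial (tableau) description; cf.\ \cite{HHL05}, the $\quinv$ formulation being the subject of \cite{AMM20}. Informally, one scans $\dg(\lambda)$ and, among the cells of content $i$ or $i+1$, designates certain occurrences as ``matched'' (grouped in pairs whose contribution to every triple is already insensitive to the $i\leftrightarrow i+1$ swap) and the remainder as ``free,'' and then one interchanges all free $i$'s and $(i+1)$'s. Properties (a) and (c) are built into the construction; the content lies in verifying (b), namely that for every triple of $\dg(\lambda)$ --- ordinary, degenerate, and those straddling a modified row --- membership in $\cQ$ is unaffected, which requires the matching to be chosen with awareness of the rows adjacent to each modified one. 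Since $\psi_i$ is defined on all of $\Tab(\lambda,n)$, the case $\ell=0$ (taking $w$ to be the empty configuration) recovers the full symmetry of $\widetilde{H}_\lambda(X;1,t)$ from the same construction, so no input beyond this single involution is needed.
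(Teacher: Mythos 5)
Your reduction is sound and matches the paper's own setup: by \cref{thm:TAZRP-stat-distn} it suffices to show the numerator is symmetric in $x_{\ell+1},\dots,x_n$, and you correctly observe that the condition $\proj(\sigma)\in\overline{w}$ depends only on the bottom-row cells with content $\le\ell$, so any weight-preserving operation that touches only cells with content in $\{\ell+1,\dots,n\}$ preserves the fibre over $\overline{w}$. The gap is that the entire content of the proposition is concentrated in the involution $\psi_i$, which you do not construct, and the justification you offer for its existence is mistaken: neither \cite{HHL05} nor \cite{AMM20} establishes the symmetry of the modified Macdonald tableau formula by a Bender--Knuth-type involution. Both prove it by expanding the generating function into LLT polynomials and invoking the separately established symmetry of LLT polynomials; no quinv-preserving involution swapping the multiplicities of $i$ and $i+1$ is available in those references, and producing one is a genuinely hard problem precisely because $\quinv$ couples adjacent rows through triples (including degenerate ones), so the ``matching'' of paired versus free entries you allude to would have to be globally consistent across rows. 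As written, the proof asserts its key lemma by appeal to a mechanism that the cited sources do not contain.

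The paper sidesteps this by running the LLT argument in a relativized form. It decomposes each filling as $\sigma=(\sle,\sg)$ according to the set $S$ of cells with content $\le\ell$, fixes the sub-filling $\nu=\sle$, and shows in \cref{lem:C} that the generating function $C_{\lambda,\nu}$ over the complementary fillings $\sg$ equals $x^{\nu}t^{\widehat{\inv}(\nu,\lambda)}$ times a sum of LLT polynomials in the variables $x_{\ell+1},\dots,x_n$, using the identity $\quinv(\sigma)=\widehat{\inv}(\sigma)-\widehat{\A}(\lambda,D(\sigma))$, the decomposition of $\widehat{\inv}$ from \cref{lem:descent}, and the bijection $\sh$ to ribbon fillings. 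Symmetry then follows from the known symmetry of LLT polynomials. To repair your argument you would either need to actually construct $\psi_i$ (which would be a new result of independent interest, not a citation) or replace that step with an LLT-type expansion as the paper does.
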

Note that when $\ell=0$, $w$ is the empty configuration and this is just the statement that the partition function is symmetric in the variables $\{x_1,\ldots,x_n\}$.

For instance, in \cref{ex:w} the coefficient $[t^3]P_{\lambda,n}(\overline{w})$ where $\lambda=(2,2,1,1)$, $n=4$, and $w=(1|2)$ is symmetric in the variables $x_3,x_4$. 

\cref{thm:symm probs} proves \cref{thm:TAZRP-symmetry} and will follow as an immediate corollary of \cref{lem:C}.

We write $\wt(\overline{w};\lambda,n) = \widetilde{H}_\lambda(x_1,\dots,x_n;1,t)  \mathbb{P}_{\lambda,n}(\overline{w})$ for the corresponding stationary weight. 
Let $\sigma\vert_{\ell}$ denote the restriction of $\sigma$ to the cells with content $\leq \ell$. Then we have
\begin{equation}\label{eq:Pr}
\wt(\overline{w};\lambda,n) = \sum_{\substack{\sigma\in\Tab(\lambda,n)\\\proj(\sigma\vert_{\ell}) = w}} t^{\quinv(\sigma)}x^{\sigma}.
\end{equation}

\begin{example}
Continuing our running example from above, $\wt(\overline{w};\lambda,n)$ is given by the sum over the following fillings.
\begin{center}
\begin{tikzpicture}[scale=0.5]
\node at (-4,0) {$(1|2|2|1):$};
\node at (0,0) {\tableau{\ast&\ast\\ 2&3&1&4}};
\node at (4,0) {\tableau{\ast&\ast\\ 2&3&4&1}};
\node at (8,0) {\tableau{\ast&\ast\\ 3&2&1&4}};
\node at (12,0) {\tableau{\ast&\ast\\ 3&2&4&1}};

\node at (0,-3) {\tableau{\ast&\ast\\ 2&4&1&3}};
\node at (4,-3) {\tableau{\ast&\ast\\ 2&4&3&1}};
\node at (8,-3) {\tableau{\ast&\ast\\ 4&2&1&3}};
\node at (12,-3) {\tableau{\ast&\ast\\ 4&2&3&1}};

\node at (0,-6) {\tableau{\ast&\ast\\ 2&3&1&3}};
\node at (4,-6) {\tableau{\ast&\ast\\ 2&3&3&1}};
\node at (8,-6) {\tableau{\ast&\ast\\ 3&2&1&3}};
\node at (12,-6) {\tableau{\ast&\ast\\ 3&2&3&1}};

\node at (0,-9) {\tableau{\ast&\ast\\ 2&4&1&4}};
\node at (4,-9) {\tableau{\ast&\ast\\ 2&4&4&1}};
\node at (8,-9) {\tableau{\ast&\ast\\ 4&2&1&4}};
\node at (12,-9) {\tableau{\ast&\ast\\ 4&2&4&1}};

\node at (-4,-3) {$(1|2|1|2):$};
\node at (-4,-6) {$(1|2|21|\cdot):$};
\node at (-4,-9) {$(1|2|\cdot|21):$};

\end{tikzpicture}
\end{center}

\end{example}

We refine \eqref{eq:Pr} further. We first introduce the following notation. 

\begin{defn}
Let $S\subseteq \dg(\lambda)$ be a subset of cells, and let $0\leq\ell\leq n$ be fixed. Given two functions 
\[
\sle: S \rightarrow [\ell] \quad \text{and} \quad \sg:\dg(\lambda)\backslash S \rightarrow \{\ell+1,\ldots,n\},
\]
we define the filling 
$\sigma=(\sle,\sg)\in\Tab(\lambda,n)$
as the function which agrees with $\sle$ on $S$ and which
agrees with $\sg$ on $\dg(\lambda)\setminus S$. 
Then $\sigma$ has
all cells in $S$ filled with entries $\leq \ell$ and all cells outside of $S$ filled with entries $>\ell$.
\end{defn}

\begin{example}
For $\lambda=(3,2,2)$,  $S = \{(1,1), (1,3), (2,1), (2,2)\}$, $\ell=2$, the tableau $\sigma=(\sle,\sg)$ decomposes as below.
\begin{center}
\begin{tikzpicture}[scale=0.5]
\node at (-2,0) {$\sigma=$};
\node at (0,0) {\tableau{4\\1&2&3\\2&4&2}};
\node at (4.5,0) {$\sle=$};
\node at (7,0) {\tableau{\greysq\\&&\greysq\\&\greysq&}};
\node at (7,0) {\tableau{\\1&2&\\2&&2}};
\node at (12,0) {$\sg=$};
\node at (14.5,0) {\tableau{&\\\greysq&\greysq&\\\greysq&&\greysq}};
\node at (14.5,0) {\tableau{4\\&&3\\&4&}};
\end{tikzpicture}
\end{center}
\end{example}

For any fixed $\ell$, one can write the set of all fillings $\Tab(\lambda,n)$ as the following disjoint union:
\[
\Tab(\lambda,n) = \bigcup_{S\subseteq \dg(\lambda)}\ \bigcup_{\sle:S\rightarrow [\ell]} \Big\{ \sigma=(\sle,\sg): \sg:\dg(\lambda)\backslash S \rightarrow \{\ell+1,\ldots,n\}\Big\}.
\]

We can similarly refine \eqref{eq:Pr}. Define $\dg(\lambda;w)\subseteq\dg(\lambda)$ to be a set of cells \emph{compatible} with $w$, meaning that $\dg(\lambda;w)$ restricted to the bottom row must have the same type as $w$.
\begin{equation}\label{eq:refined}
\wt(\overline{w};\lambda,n) = \sum_{S\subseteq \dg(\lambda;w)}\ \sum_{\substack{\sle :S \rightarrow [\ell]\\\proj(\sle)=w}}\ \sum_{\sg:\dg(\lambda)\backslash S\rightarrow \{\ell+1,\ldots,n\}} t^{\quinv(\sle,\sg)}x^{(\sle,\sg)}.
\end{equation}

To simplify notation, for any $S\subseteq \dg(\lambda)$ and a filling $\nu: S \rightarrow [\ell]$, define
\begin{equation}\label{eq:C}
C_{\lambda,\nu}=C_{\lambda,\nu}^{(\ell)} (X;t) = \sum_{\sg:\dg(\lambda)\backslash S\rightarrow \{\ell+1,\ldots,n\}} t^{\quinv(\nu,\sg)}x^{(\nu,\sg)}
\end{equation}

We will prove the following.

\begin{lemma}\label{lem:C}
For any $S\subseteq \dg(\lambda)$ and a fixed filling $\nu:S\rightarrow [\ell]$, $C_{\lambda,\nu}(x_1,\ldots,x_n;t)$ is symmetric in the variables $\{x_{\ell+1},\ldots, x_n\}$.
\end{lemma}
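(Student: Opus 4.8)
The plan is to prove that $C_{\lambda,\nu}$ is fixed by every adjacent transposition $x_p\leftrightarrow x_{p+1}$ with $\ell+1\le p\le n-1$, since these generate the symmetric group on $\{x_{\ell+1},\dots,x_n\}$. First I would reduce this, by an elementary induction, to the single transposition $x_{\ell+1}\leftrightarrow x_{\ell+2}$. For fixed $\ell\le n-1$, partition the sum defining $C_{\lambda,\nu}=C_{\lambda,\nu}^{(\ell)}$ according to the set $S'$ with $S\subseteq S'\subseteq\dg(\lambda)$ consisting of all cells whose content is at most $\ell+1$; writing $\nu'$ for the extension of $\nu$ to $S'$ that equals $\ell+1$ on $S'\setminus S$, one obtains
\[
C_{\lambda,\nu}^{(\ell)}(X;t)=\sum_{S':\,S\subseteq S'\subseteq\dg(\lambda)} C_{\lambda,\nu'}^{(\ell+1)}(X;t),
\]
since specifying a filling $\sg:\dg(\lambda)\setminus S\to\{\ell+1,\dots,n\}$ is the same as specifying $S'=S\cup\sg^{-1}(\ell+1)$ together with a filling of $\dg(\lambda)\setminus S'$ by $\{\ell+2,\dots,n\}$, and this re-bookkeeping alters neither $\quinv$ nor the monomial $x^{(\nu,\sg)}$. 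Arguing by downward induction on $\ell$ (with $\ell\ge n-1$ trivial, since then $C_{\lambda,\nu}$ involves at most one of the variables $x_{\ell+1},\dots,x_n$), we may assume each $C_{\lambda,\nu'}^{(\ell+1)}$ is symmetric in $x_{\ell+2},\dots,x_n$, hence so is $C_{\lambda,\nu}^{(\ell)}$; together with invariance under $x_{\ell+1}\leftrightarrow x_{\ell+2}$ this yields the full symmetry.

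The remaining input is a Bender--Knuth-type involution $\varphi$ on the set of fillings $\sg:\dg(\lambda)\setminus S\to\{\ell+1,\dots,n\}$ (each extended by the fixed $\nu$ on $S$) that exchanges the numbers of cells with content $\ell+1$ and with content $\ell+2$, preserves every other content-count, and preserves $\quinv$. Note that such a $\varphi$ is automatically the identity on the cells of $S$, whose contents are $\le\ell$. I would define $\varphi$ in the usual manner: it acts only on cells with content in $\{\ell+1,\ell+2\}$, it is computed row by row, and within a row it sends a maximal run of \emph{active} such cells $(\ell+1)^a(\ell+2)^b$ to $(\ell+1)^b(\ell+2)^a$, where a cell of content $\ell+1$ or $\ell+2$ is declared \emph{locked} — and therefore left alone — exactly when flipping it alone would change the $\quinv$-status of some triple it belongs to relative to a cell that is not itself being flipped. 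To make this precise one classifies, for a cell $u$ of content $p\in\{\ell+1,\ell+2\}$, all triples containing $u$ — those in which $u$ is the top cell $(r+1,i)$, the lower-left cell $(r,i)$, or the lower-right cell $(r,j)$ of a non-degenerate triple, plus the degenerate triples at the top of a column — and reads off from the description of $\cQ$ in \eqref{cQdef} which neighbouring contents lock $u$. As in the classical semistandard case, these locking rules make $\varphi$ well defined, content-count preserving, and an involution; the work is in showing that a single block flip leaves $\quinv$ unchanged.

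I expect $\quinv$-invariance of a block flip to be the main obstacle, and the subtlety is threefold. First, $\quinv$ triples straddle two consecutive rows, so a flip inside row $r$ can affect both the triples whose two lower cells lie in row $r$ and those whose top cell lies in row $r$; the locking condition must be chosen so that these two effects cancel along each run — this is the quinv analogue of the coupling between consecutive rows in the ordinary Bender--Knuth involution, and is essentially the bookkeeping behind \cref{lem:tauquinv}. Second, the degenerate triples at column tops (which are $\quinv$ triples exactly when the lower-left content is smaller than the lower-right content) must be incorporated into the same count. Third, the frozen cells of $S$ are never flipped but do occur in triples and must be folded in: a triple with a frozen top cell behaves exactly like a degenerate triple on its two free lower cells, while a triple with a frozen lower-left cell becomes the comparison ``lower-right content $<$ top content'' between its two free cells. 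Working with $p=\ell+1$ (so that $\ell+1,\ell+2$ are the two smallest \emph{free} contents and every cell of $S$ occurring in a shared triple has strictly smaller content) keeps this case analysis finite and uniform; once the locking rule is written down, $\quinv$-invariance follows by checking, case by case on the position of $u$ within its triples, that the contribution of each run telescopes to $0$. Granting \cref{lem:C}, the symmetry statements \cref{thm:symm probs} and \cref{thm:TAZRP-symmetry} follow as indicated above.
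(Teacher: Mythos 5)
Your reduction to a single adjacent transposition is sound: the identity $C^{(\ell)}_{\lambda,\nu}=\sum_{S'}C^{(\ell+1)}_{\lambda,\nu'}$ is correct bookkeeping (the filling itself is unchanged, so $\quinv$ and the monomial are untouched), and downward induction on $\ell$ together with invariance under $x_{\ell+1}\leftrightarrow x_{\ell+2}$ does generate the full symmetric group on $\{x_{\ell+1},\dots,x_n\}$. The gap is that the entire content of the lemma is then concentrated in the existence of a $\quinv$-preserving, content-swapping involution, and this is not a routine adaptation of Bender--Knuth that can be left as ``case-by-case telescoping.'' Three concrete problems: (1) your locking rule is circular --- whether $u$ is locked depends on which of its triple-partners are ``being flipped,'' which depends on whether \emph{they} are locked; in the classical semistandard setting the analogous rule is a local, non-recursive column-adjacency condition, and no such formulation is available here. (2) The block flip $(\ell+1)^a(\ell+2)^b\mapsto(\ell+1)^b(\ell+2)^a$ presupposes that the active cells of a row occur sorted, which holds for semistandard fillings but not for the arbitrary fillings $\sg$ summed over in \eqref{eq:C}. (3) Because $\quinv$ triples (including degenerate ones) straddle consecutive rows, a flip in row $r$ simultaneously perturbs triples based at rows $r+1$ and $r-1$, and showing these perturbations cancel is exactly the point at which direct bijective proofs of the symmetry of such generating functions have resisted elementary treatment.

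Indeed, by \cref{lem:quinv} and \cref{lem:lltA}, the quantity you are trying to symmetrize bijectively is, descent set by descent set, an LLT polynomial of a tuple of ribbons; a $\quinv$-preserving Bender--Knuth involution would therefore amount to a new elementary proof of LLT symmetry for ribbon tuples, which is a substantially harder problem than the lemma. The paper deliberately avoids this: it rewrites $\quinv(\sigma)=\widehat{\inv}(\sigma)-\widehat{\A}(\lambda,D(\sigma))$, notes that the arm correction depends only on the descent set $D$, groups the fillings $\sg$ by $D$, identifies each group's generating function with $\LLT_{\sh(S,D)}(x_{\ell+1},\dots,x_n)$ via the bijection $\sh$, and then invokes the known symmetry of LLT polynomials. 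To complete your argument you would need either to actually construct and verify the involution (a significant open-ended task, not a finite check) or to fall back on the LLT route as the paper does.
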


We prove this by writing $C_{\lambda,\nu}(x_1,\ldots,x_n;t)$ as a sum over Lascoux--Leclerc--Thibon (LLT) polynomials in the variables $\{x_{\ell+1},\ldots,\allowbreak x_n\}$, which are known to be symmetric \cite{LLT}. Recall that the \emph{LLT polynomials}, denoted $G_{\boldsymbol{\gamma}}(x_1,\dots,x_n; t)$ are a family of symmetric polynomials indexed by a tuple of skew shapes $\boldsymbol{\gamma}=(\gamma_1,\ldots,\gamma_k)$ (in our case, the skew shapes will be ribbons). Let $\LLT(\boldsymbol{\gamma})$ be the set of semistandard fillings of the skew shapes. Then $G_{\boldsymbol{\gamma}}(X;t)$ can be written as a weighted sum over $\LLT(\boldsymbol{\gamma})$ with the statistic $t$ counting certain inversions in the fillings, which we shall refer to as \emph{LLT inversions}~\cite{LLT}.
  In fact, we will use essentially the same proof strategy that was used in the prequel to this article to show our tableaux formula for $\widetilde{H}_{\lambda}(x_1,\ldots,x_n;t)$ is symmetric in the variables $\{x_1,\ldots,x_n\}$ \cite[\S 4]{AMM20}, which was in turn adapted from the proof of symmetry of the HHL formula for $\widetilde{H}_{\lambda}$ in \cite[\S 5]{HHL05}. See \cite[\S 4]{AMM20} for the relevant definitions.

\begin{defn}
Let $\lambda$ be a partition and $S\subseteq \dg(\lambda)$ be any configuration of cells. The \emph{descent set} $D_S(\sigma)$ of a filling $\sigma$ of $S$ is
the restriction of the descent set $D(\sigma)$ to the cells $S$:
\[ 
D_S(\sigma) = \{x\in S\ :\ \South(x)\in S,\ \sigma(x)>\sigma(\South(x))\}.
\]
When $S=\dg(\lambda)$, we will simply write $D(\sigma)$.
\end{defn}

\begin{defn}\label{def:attacking}
Let $\lambda$ be a partition and $S\subseteq \dg(\lambda)$ be any subset of cells. An \emph{attacking inversion} of a filling $\sigma$ of $S$ is a pair of cells $u,v\in S$ 
such that $\sigma(u)>\sigma(v)$, and $u,v$ are in one of two configurations:
\begin{itemize}
\item (Type A) $u=(r,i)$ and $v=(r-1,j)$:\ \raisebox{-10pt}{\begin{tikzpicture}[scale=0.5]\cellL00{}{$u$}\node at (1,-.5) {$\ldots$}; \cellL13{}{$v$}\end{tikzpicture}}\ , or 
\item (Type B) $u=(r,j)$ and $v=(r,i)$:\ \raisebox{-5pt}{\begin{tikzpicture}[scale=0.5]\cellL00{}{$v$}\node at (1,.5) {$\ldots$}; \cellL03{}{$u$}\end{tikzpicture}}\ ,
\end{itemize}
for some row $r$.
Borrowing notation from \cite{AMM20}, we let $\widehat{\inv}(\sigma)$ denote the number of attacking inversions in $\sigma$.
\end{defn}

\begin{defn}
Let $\lambda$ be a partition. The \emph{arm length} of a cell $u=(r,j)\in \dg(\lambda)$, denoted by $\widehat{\A}(u)$, is 
\[
\widehat{\A}(u) =\Big| \{(r-1,i)\in \dg(\lambda)\ |\ i>j \}\Big|.
\] 
One way to think of $\widehat{\A}(u)$ is as the set of all cells $v\in \dg(\lambda)$ that could potentially form attacking inversions of type B with $u$.  The arm length of a filling $\sigma$ of $\dg(\lambda)$ is the sum of the arm lengths of all the cells in $D(\sigma)$:
\[
\widehat{\A}(\sigma) = \sum_{u\in D(\sigma)} \widehat{\A}(u).
\]
\end{defn}

Although immediate from the definition, the following lemma is helpful.

\begin{lemma}\label{lem:arm}
Let $\lambda$ be a partition. If $\sigma,\sigma'\in\Tab(\lambda)$ have $D(\sigma)=D(\sigma')\subseteq \dg(\lambda)$, then also $\widehat{\A}(\sigma)=\widehat{\A}(\sigma')$.
\end{lemma}

Thus from now on we may write $\widehat{\A}(\lambda,D):=\widehat{\A}(\sigma)$ where $\sigma$ is a filling of $\dg(\lambda)$ and $D=D(\sigma)$.

The way we relate these new statistics to our tableaux formula is via the following lemma that was proved in the prequel to this article.

\begin{lemma}[{\cite[Lemma 4.6]{AMM20}}]\label{lem:quinv}
Let $\lambda$ be a partition and let $\sigma\in\Tab(\lambda)$. Then 
\[
\quinv(\sigma) = \widehat{\inv}(\sigma) - \widehat{\A}(\sigma) = \widehat{\inv}(\sigma)-\widehat{\A}(\lambda,D(\sigma)).
\]
\end{lemma}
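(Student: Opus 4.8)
The plan is to establish the identity triple by triple. The second equality, $\widehat{\A}(\sigma)=\widehat{\A}(\lambda,D(\sigma))$, is exactly \cref{lem:arm}, so only $\quinv(\sigma)=\widehat{\inv}(\sigma)-\widehat{\A}(\sigma)$ needs proof, and the key point is that all three statistics decompose as sums indexed by the triples of $\dg(\lambda)$ (in the sense of \cref{def:quinv}).

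First I would record a correspondence between triples and the data counted on the right-hand side. A non-degenerate triple $L=\{(r+1,i),(r,i),(r,j)\}$ with $i<j$ contains exactly one Type-A attacking pair, $\{(r+1,i),(r,j)\}$, exactly one Type-B attacking pair, $\{(r,i),(r,j)\}$, and the vertical pair $\{(r+1,i),(r,i)\}$; a degenerate triple contains only a Type-B pair. Conversely, each Type-A attacking pair determines a unique non-degenerate triple; each Type-B attacking pair $\{(r,i),(r,j)\}$, $i<j$, is the bottom pair of the unique triple on these columns, which is non-degenerate precisely when $(r+1,i)\in\dg(\lambda)$; and for a cell $u=(r+1,i)\in\dg(\lambda)$ the non-degenerate triples with top cell $u$ are exactly the $\{u,(r,i),(r,j)\}$ with $j>i$, so there are $\widehat{\A}(u)$ of them. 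Hence $\widehat{\inv}(\sigma)=\sum_L(\iota_A(L)+\iota_B(L))$ and $\widehat{\A}(\sigma)=\sum_L\delta(L)$, where $L$ ranges over all triples of $\dg(\lambda)$ (degenerate $L$ contributing $\iota_A=\delta=0$), $\iota_A(L)$ and $\iota_B(L)$ are the indicators that the Type-A pair, resp.\ bottom pair, of $L$ is an attacking inversion, and $\delta(L)$ is the indicator that the top cell of $L$ lies in $D(\sigma)$. Checking that this accounting is exhaustive and repetition-free is routine from the definitions of attacking pairs, arm length, and triples, using that $\lambda$ is a partition so that column heights weakly decrease.

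The heart of the matter is then a local identity, valid for every triple $L$:
\[
\quinv(\sigma;L)=\iota_A(L)+\iota_B(L)-\delta(L);
\]
summing over $L$ and invoking the previous paragraph gives the lemma. With $a=\sigma(r+1,i)$ (and $a=0$ for a degenerate triple), $b=\sigma(r,i)$, $c=\sigma(r,j)$, one has $\iota_A(L)=\one[a>c]$, $\iota_B(L)=\one[c>b]$, $\delta(L)=\one[a>b]$, while $\quinv(\sigma;L)=\one[(a,b,c)\in\cQ]$ by \cref{def:quinv} and \eqref{cQdef}; all four quantities depend only on the weak order of $a,b,c$ (ties broken by reading order). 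So the local identity reduces to a finite verification over the possible weak orders of $(a,b,c)$, with the degenerate case appearing as the specialization $a=0$, where it collapses to $\quinv(\sigma;L)=\one[b<c]$, as required by \cref{def:quinv}.

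I expect the main obstacle to be the second paragraph together with getting the signs right in the local identity: the triples-to-pairs correspondence must be handled carefully at degenerate triples and at cells on the boundary of $\dg(\lambda)$, and the local identity is sensitive to the orientation convention for quinv triples and to which cell is ``$u$'' and which is ``$v$'' in the two configurations of \cref{def:attacking}. These are bookkeeping points rather than conceptual ones; once the triple-indexed decomposition of all three statistics is established, the result follows by summation.
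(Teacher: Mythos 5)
Your triple-by-triple decomposition is exactly the right approach, and it is essentially the intended proof (the present paper does not reprove this lemma but cites the prequel, where the argument is of precisely this form). The accounting in your second paragraph is correct: each Type-A attacking pair is the Type-A pair of a unique non-degenerate triple, each Type-B pair is the bottom pair of a unique triple (degenerate exactly when $(r+1,i)\notin\dg(\lambda)$), and the $\widehat{\A}(u)$ non-degenerate triples with top cell $u$ recover $\widehat{\A}(\sigma)=\sum_L\delta(L)$.

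The step you defer as ``a finite verification'' is, however, where the proof genuinely lives, and it does not close against \eqref{cQdef} as printed. A direct check shows that $\one[a>c]+\one[c>b]-\one[a>b]$ equals $1$ precisely on the weak orders $a<b<c$, $b<c<a$, $c<a<b$, and $a=b\neq c$, and equals $0$ otherwise (in particular on $a=c\neq b$, $b=c\neq a$, and $a=b=c$; and with $a=0$ it collapses to $\one[b<c]$, as needed for degenerate triples). The set $\cQ$ displayed in \eqref{cQdef} lists $c<b<a$ where this computation forces $c<a<b$: on the order $c<a<b$ your right-hand side is $1$ while $(a,b,c)\notin\cQ$, and on $c<b<a$ it is $0$ while $(a,b,c)\in\cQ$, so the local identity fails in two of the six strict orders if \eqref{cQdef} is taken literally. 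The resolution is that \eqref{cQdef} contains a typo: the counterclockwise description in \cref{def:quinv}, and the way $\cQ$ is used elsewhere (e.g.\ the assertion in the proof of \cref{lem:quinvdiff} that $(k+\ell,k+\ell-1,c)$ is never in $\cQ$, which is false for the printed set), both correspond to the pattern $c<a<b$. So you should carry out the six-case check explicitly against the counterclockwise characterization, with ties resolved by reading order (earlier in reading order treated as smaller); with that done, your argument is complete.
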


\begin{lemma}
\label{lem:descent}
Fix a partition $\lambda$, $\ell\leq n$, and a sub-tableau $\nu:S\rightarrow [\ell]$ for some subset of cells $S\subseteq \dg(\lambda)$. Let $\sg:\dg(\lambda)\backslash S \rightarrow \{\ell+1,\ldots,n\}$. 
Then the number of attacking inversions of a filling $\sigma=(\nu,\sg)$ decomposes as
\begin{equation}\label{eq:inv}
\widehat{\inv}(\sigma) = \widehat{\inv}(\sg)+
I(\nu,\lambda),
\end{equation}
where $I(\nu,\lambda)$ is independent of $\sigma^>$. 
\end{lemma}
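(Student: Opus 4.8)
The plan is to sort the attacking inversions of $\sigma=(\nu,\sg)$ by the location of their two cells relative to $S$: both cells in $S$, both cells in $\dg(\lambda)\setminus S$, or exactly one in each (a \emph{mixed} pair). The pairs with both cells in $S$ are precisely the attacking inversions of the filling $\nu$ of $S$, since $\sigma$ agrees with $\nu$ there; they contribute $\widehat{\inv}(\nu)$, which depends only on $\nu$. The pairs with both cells outside $S$ are precisely the attacking inversions of the filling $\sg$ of $\dg(\lambda)\setminus S$, contributing $\widehat{\inv}(\sg)$. So the whole statement reduces to showing that the number of \emph{mixed} attacking inversions depends only on $S$ (equivalently, only on $\nu$) and $\lambda$, and not on the choice of $\sg$.

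To see this, recall that by construction every cell of $S$ carries a value in $[\ell]$ under $\nu$, while every cell of $\dg(\lambda)\setminus S$ carries a value in $\{\ell+1,\dots,n\}$ under $\sg$; in particular $\sigma(v)\le \ell<\sigma(u)$ whenever $v\in S$ and $u\in\dg(\lambda)\setminus S$. Now fix any two cells $u,v$ sitting in a Type A or Type B attacking configuration in the sense of \cref{def:attacking}, with $u$ in the upper (Type A) or right-hand (Type B) role; being in such a configuration is a purely geometric condition on $\dg(\lambda)$, independent of the filling. If exactly one of $u,v$ lies in $S$, then: when $u\notin S$ and $v\in S$ we have $\sigma(u)>\ell\ge\sigma(v)$, so the pair is \emph{always} an attacking inversion; when $u\in S$ and $v\notin S$ we have $\sigma(u)\le \ell<\sigma(v)$, so it is \emph{never} an attacking inversion. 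Hence whether a mixed geometric pair contributes is decided entirely by which of its cells belongs to $S$, and the number of mixed attacking inversions equals
\[
M(S,\lambda):=\#\big\{\text{Type A or Type B configurations }(u,v)\text{ in }\dg(\lambda)\ :\ u\notin S,\ v\in S\big\},
\]
which involves neither $\sg$ nor the particular values of $\nu$.

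Combining the three classes gives $\widehat{\inv}(\sigma)=\widehat{\inv}(\nu)+\widehat{\inv}(\sg)+M(S,\lambda)$, so \eqref{eq:inv} holds with $I(\nu,\lambda):=\widehat{\inv}(\nu)+M(S,\lambda)$, which is manifestly independent of $\sg$. I do not anticipate a genuine obstacle here; the only point requiring care is the bookkeeping of the roles of $u$ and $v$ in the Type A versus Type B configurations together with the direction of the defining inequality $\sigma(u)>\sigma(v)$, and once the value separation $\sigma|_{S}\le\ell<\sigma|_{\dg(\lambda)\setminus S}$ is recorded, every mixed pair is resolved automatically.
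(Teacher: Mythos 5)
Your proposal is correct and follows essentially the same route as the paper: split the attacking pairs into those with both cells in $S$, both outside $S$, and mixed, observe the first class gives $\widehat{\inv}(\nu)$, the second gives $\widehat{\inv}(\sg)$, and the mixed class is determined purely by the geometry of $S$ inside $\dg(\lambda)$ because the value separation $\sigma|_S\le\ell<\sigma|_{\dg(\lambda)\setminus S}$ forces $u\notin S$ and $v\in S$. Your write-up is in fact slightly more explicit than the paper's about why the mixed count is independent of both fillings.
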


\begin{proof}

Each pair $u,v$ that forms an attacking inversion (where $\sigma(u)>\sigma(v)$ and $u,v$ are in one of the two configurations from \cref{def:attacking}) belongs in one of three categories, splitting $\widehat{\inv}(\sigma)$ into three components.
If $u,v \not\in S$, the pair contributes to $\widehat{\inv}(\sg)$. If $u,v\in S$, the pair contributes to $\widehat{\inv}(\nu)$. Otherwise, since we assume $\sigma(u)>\sigma(v)$, we must have $u\not\in S$ and $v\in S$, so the pair contributes to the third summand. The second summand is a function of $\nu$, and the third summand is a function of the diagram $S$ with relation to $\dg(\lambda)$, and so we write both combined as $I(\nu,\lambda)$.
\end{proof}

We now construct a certain weight-preserving bijection $\sh$ between tableaux with a fixed descent set and LLT fillings of ribbon shapes. Fix a partition $\lambda$, a set of cells $S\subseteq \dg(\lambda)$ and a descent set $D\subseteq S$. 
Let $\Tab(S,D)$ denote fillings of $S$ with descent set $D$.  Set $k = |\ell(\lambda)|$ and let $\boldsymbol{\gamma}=(\gamma_1,\ldots,\gamma_{k})$ be a tuple of ribbons.

We construct a map $\sh: \Tab(S,D) \to \LLT(\boldsymbol{\gamma})$ as follows. 
For $1 \leq j \leq k$, construct the ribbon $\gamma_{k-j+1}$ from the cells in column $j$ of $S$. Each cell $x=(r,j)\in S$ is mapped to the cell $y(x)$ in $\gamma_{k-j+1}$ such that:
\begin{itemize}
\item $y(x)$ is on diagonal $r-1$,
\item if $\South(x)\not\in S$, then $y(x)$ has no adjacent cells below or to its right,
\item if $\South(x)\in S$ and $x\in D$, then $y(x)$ is directly above $y(\South(x))$ (i.e.~$\South(y(x))=y(\South(x))$), and
\item if $\South(x)\in S$ and $x\not\in D$, then $y(x)$ is directly to the left of $y(\South(x))$.
\end{itemize} 
Now, a filling $\sigma$ of $S$ is mapped to a SSYT of shape of $\boldsymbol{\gamma}$ by sending the content $\sigma(x)$ to the cell $y(x)$. With slight abuse of notation, we write $\boldsymbol{\gamma} = \sh(S,D)$.

By the construction above, the filling $\sh(\sigma)$ of $\boldsymbol{\gamma}$ will be weakly increasing along rows from left to right and strictly increasing along columns from bottom to top. Moreover, it is quite straightforward to check that for a filling $\sigma$ of $S$, two cells $u,v\in S$ contribute to $\widehat{\inv}(\sigma)$ if and only if they form an LLT inversion in $\sh(\sigma)$.

\begin{lemma}
\label{lem:lltA}
Let $\sigma$ be a filling of $S\subseteq \dg(\lambda)$ with the integers in $\{\ell+1,\ldots,n\}$, $D=D_S(\sigma)$ be its descent set, and $\boldsymbol{\gamma}=\sh(S,D)$. Then
\[
\sum_{\substack{\sigma:S \rightarrow   \{\ell+1,\ldots,n\}\\ D(\sigma)=D}} x^{\sigma}t^{\widehat{\inv}(\sigma)} = \LLT_{\boldsymbol{\gamma}}(x_{\ell+1},\ldots,x_n).
\] 
\end{lemma}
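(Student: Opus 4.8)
The plan is to exhibit the map $\sh:\Tab(S,D)\to\LLT(\boldsymbol{\gamma})$ constructed just above as a weight-preserving bijection, and then simply to sum. Here I take $\Tab(S,D)$ to consist of the fillings $\sigma:S\to\{\ell+1,\ldots,n\}$ with descent set $D_S(\sigma)=D$, and $\LLT(\boldsymbol{\gamma})$ to be the semistandard fillings of the ribbon tuple $\boldsymbol{\gamma}=\sh(S,D)$ with entries in the same alphabet. Once I know that $\sh$ is a bijection and that $x^{\sh(\sigma)}=x^{\sigma}$ and $\inv_{\LLT}(\sh(\sigma))=\widehat{\inv}(\sigma)$, the identity follows immediately by summing $x^{\sigma}t^{\widehat{\inv}(\sigma)}=x^{\sh(\sigma)}t^{\inv_{\LLT}(\sh(\sigma))}$ over $\sigma\in\Tab(S,D)$ and invoking the combinatorial formula for the LLT polynomial (with $t$ tracking LLT inversions); only the variables $x_{\ell+1},\ldots,x_n$ occur, which is why the right-hand side is $\LLT_{\boldsymbol{\gamma}}(x_{\ell+1},\ldots,x_n)$.

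The content statement is essentially free: $\sh$ only relocates the cells of $S$ and leaves their contents untouched, so $x^{\sh(\sigma)}=\prod_{x\in S}x_{\sigma(x)}=x^{\sigma}$. For bijectivity I would argue as follows. The only adjacencies present among the cells of the ribbons in $\boldsymbol{\gamma}$ are the pairs $\{y(x),\,y(\South(x))\}$ for which $x$ and $\South(x)$ both lie in $S$; and by the construction of $\sh$ the cell $y(x)$ sits directly above $y(\South(x))$ exactly when $x\in D$ and directly to the left of $y(\South(x))$ exactly when $x\notin D$. Consequently $\sh(\sigma)$ is weakly increasing along rows and strictly increasing up columns of $\boldsymbol{\gamma}$ --- i.e.\ lies in $\LLT(\boldsymbol{\gamma})$ --- if and only if $\sigma(x)>\sigma(\South(x))$ for every $x\in D$ and $\sigma(x)\le\sigma(\South(x))$ for every $x\notin D$ with $\South(x)\in S$, which is precisely the condition $D_S(\sigma)=D$. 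Running the same equivalence backwards along the (manifestly invertible) cell correspondence shows that every $T\in\LLT(\boldsymbol{\gamma})$ has a unique preimage in $\Tab(S,D)$, so $\sh$ is indeed a bijection.

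The remaining point, and the one I expect to be the only real work, is the equality of statistics $\widehat{\inv}(\sigma)=\inv_{\LLT}(\sh(\sigma))$. A Type~A attacking pair $u=(r,i)$, $v=(r-1,j)$ with $i<j$ is carried by $\sh$ onto a pair of cells lying on the consecutive diagonals $r-1$ and $r-2$ of $\boldsymbol{\gamma}$, while a Type~B pair $u=(r,j)$, $v=(r,i)$ with $i<j$ is carried onto a pair on the common diagonal $r-1$; and conversely every pair of cells of $\boldsymbol{\gamma}$ on equal or consecutive diagonals that is eligible to form an LLT inversion comes from a unique such pair of cells of $S$. What then has to be checked is that the content inequality $\sigma(u)>\sigma(v)$ together with the positional condition in \cref{def:attacking} (with its reading-order tie-break) matches, under $\sh$, the defining content-plus-reading-order condition for an LLT inversion of $\sh(\sigma)$. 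This is exactly the bookkeeping carried out in \cite[\S4]{AMM20} --- itself an adaptation of \cite[\S5]{HHL05} --- and here one simply restricts that argument to the cells of $S$; the delicate part is keeping the reading order synchronized across the bijection, but no new phenomenon arises. Assembling the three ingredients yields the lemma.
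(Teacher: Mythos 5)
Your proposal is correct and follows essentially the same route the paper takes: the paper itself gives no proof of \cref{lem:lltA}, instead pointing to \cite[\S 4]{AMM20} where the case $S=\dg(\lambda)$ is proved and asserting that the argument generalizes, which is exactly the weight-preserving-bijection-via-$\sh$ strategy you describe. You supply somewhat more detail than the paper does (in particular the observation that semistandardness of $\sh(\sigma)$ is equivalent to $D_S(\sigma)=D$, which gives bijectivity, and the diagonal bookkeeping matching Type~A/Type~B attacking pairs to LLT inversions), while deferring the same residual verification to the same reference, so there is no substantive difference in approach.
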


We will not provide a proof, but will instead direct the reader to \cite[Section 4]{AMM20} where this is proved for the case $S=\dg(\lambda)$, and naturally generalizes to this setting as well. We observe that because the filling of $\nu$ contains only integers $\leq \ell$ and the filling of $\sigma$ contains only integers $\geq \ell+1$, the diagram $\sh(S,D)$ that we have described is equivalent to the diagram $\sh(\dg(\lambda),D(\nu,\sigma))$ when restricted to the cells corresponding to $S$, up to relative position.

We shall now use the above to write $C_{\lambda,\nu}$ as a sum over LLT polynomials, from which their symmetry follows.

\begin{proof}[Proof of \cref{lem:C}]
By \cref{lem:arm,lem:quinv}, we can write

\begin{align*}
C_{\lambda,\nu}(x_1,\ldots,x_n;t)& = \sum_{D\subseteq (\dg(\lambda)\backslash S)^+}\quad \sum_{\substack{\sg:\dg(\lambda)\backslash S \rightarrow \{\ell+1,\ldots,n\}\\ D(\sg)=D}} x^{(\nu,\sg)}t^{\widehat{\inv}(\nu,\sg)-\widehat{\A}(\nu,\sg)}\\
&=x^{\nu}t^{\widehat{\inv}(\nu,\lambda)} \sum_{D\subseteq (\dg(\lambda)\backslash S)^+} t^{-\widehat{\A}(\lambda,D)}
\sum_{\substack{\sg:\dg(\lambda)\backslash S \rightarrow   \{\ell+1,\ldots,n\}\\ D(\sg)=D}} x^{\sg}t^{\widehat{\inv}(\sg)}
\end{align*}
where for a subset of cells $A$, we use the notation $A^+:=\{x\in A\ :\ \South(x)\in A\}\subseteq A$ (this is to be consistent with the fact that any element of a descent set must have an existing cell south of it). 

By \cref{lem:lltA}, we get the expansion
\[
C_{\lambda,\nu}(x_1,\ldots,x_n) = x^{\nu}t^{\widehat{\inv}(\nu,\lambda)} \sum_{D\subseteq (\dg(\lambda)\backslash S)^+} t^{-\widehat{\A}(\lambda,D)} \LLT_{\sh(S,D)} (x_{\ell+1},\ldots,x_n),
\]
which, given that its first component is in the variables $x_1,\ldots,x_\ell$, implies that $C_{\lambda,\nu}$ is indeed symmetric in the variables $\{x_{\ell+1},\ldots,x_n\}$.
\end{proof}

\subsection{A stronger symmetry property in the case $t=0$}
\label{sec:stronger-symmetry}

In the case $t=0$ we have the following stronger 
symmetry property, which applies to the paths
of the process and not just its stationary distribution.

\begin{thm}
\label{thm:stronger-symmetry}
Fix some $\ell$ with $1<\ell<n$. 
Consider the mTAZRP process, started at time $s=0$ from any initial condition in which sites $\ell+1, \dots, n$ are empty. 

The distribution of the path of the process on time interval 
$(0,\infty)$, restricted to the sites $1,\dots, \ell$, 
is unchanged under permutation of the
parameters $x_{\ell+1}, \dots, x_n$. 
\end{thm}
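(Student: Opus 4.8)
The plan is to prove the following sharper claim, from which the theorem follows: for \emph{every} choice of positive rates $x_1,\dots,x_n$ and every $j$ with $\ell+1\le j\le n-1$, interchanging the values $x_j$ and $x_{j+1}$ leaves the law of the restricted path unchanged. Since the adjacent transpositions $(j\ j+1)$ with $\ell+1\le j\le n-1$ generate the symmetric group on $\{\ell+1,\dots,n\}$, iterating this claim (allowing the base rates to vary) yields invariance under all permutations of $x_{\ell+1},\dots,x_n$. To set up, recall from \cref{sec:t0} that when $t=0$ each site $i$ carries a single exponential clock of rate $x_i^{-1}$, and each ring moves the strongest particle at that site (if any) to site $i+1$; by memorylessness each site is thus a work-conserving single server whose unlabelled departure stream is that of a $\cdot/M/1$ queue, the priority rule merely deciding which particle rides each departure. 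Because sites $\ell+1,\dots,n$ are empty at time $0$, the block $B:=\{j,j+1\}$ is an initially empty two-stage tandem: its input is the stream of particles arriving at site $j$ (the departures from site $j-1$), its output is the stream leaving site $j+1$, and the rest of the ring interacts with $B$ only through these two streams.

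The key ingredient is the \emph{interchangeability of two $\cdot/M/1$ queues in series} (Weber; see also Lehtonen, Tsoucas--Walrand): for two single-server exponential queues in tandem fed by an arbitrary arrival stream, the joint law of the pair (arrival stream, departure stream from the second server) is invariant under swapping the two service rates. What I want to use is the standard \emph{causal coupling} behind this result — the two systems can be realized on one probability space so that they receive the same arrivals and emit the same departures, with the coupling at time $s$ a measurable function of the arrivals observed up to time $s$ only. The multispecies refinement, that the \emph{labels} on the departures also agree, then follows: given the (common) unlabelled arrival and departure point processes at both stages and the memoryless service, the label carried by each departure is a deterministic functional of the input labels and those point processes, the same for either ordering of the rates; alternatively one reduces to the single-class statement by applying the colouring of \cref{prop:colouring} at every threshold and taking the resulting single-class couplings consistently.

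I would then couple the ring process $Z$ with rates $(x_1,\dots,x_n)$ and the ring process $Z'$ with rates $(x_1,\dots,x_{j-1},x_{j+1},x_j,x_{j+2},\dots,x_n)$ as follows: away from $B$ the two processes use identical clock rings and identical moves, while inside $B$ they run the causal two-server coupling above, fed by the common arrival stream at site $j$. One then shows, by induction on the successive jump times of the pooled process (which is non-exploding, the state space $\TAZRP(\lambda,n)$ being finite), that at all times $Z$ and $Z'$ have identical configurations on every site outside $B$ and identical cumulative output streams from site $j+1$. The inductive step is immediate: given the invariant up to some jump time, the next arrival into $B$ is determined by exterior clocks acting on the common exterior configuration (it propagates around the ring from site $j+1$'s past output through sites $j+2,\dots,n,1,\dots,\ell,\ell+1,\dots,j-1$), so both copies of $B$ see the same next input; the causal inner coupling then produces the same next output, and exterior moves are identical by construction. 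In particular $Z$ and $Z'$ agree on sites $1,\dots,\ell$ for all $t>0$, which is the claim.

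The step I expect to be the main obstacle is making this feedback argument rigorous: a ring has no genuine upstream, so the input to $B$ really does depend on $B$'s own earlier output after it circulates through the rest of the ring, and one must verify that the causal two-server coupling can be driven by such an adapted input without any circular self-reference — this is exactly what the induction on jump times, together with the precise non-anticipating form of Weber's coupling, is there to handle. A secondary technical point is the multispecies bookkeeping: one must check that the priority (``strongest particle jumps'') discipline is genuinely compatible with the interchange, either directly from memorylessness as sketched above or via the threshold-colouring reduction, so that not merely the departure times but the species labels match under the coupling. Non-explosion and the measurability needed to speak of ``the path on $(0,\infty)$'' are immediate from finiteness of the state space.
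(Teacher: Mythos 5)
Your overall architecture is the same as the paper's: interpret each site at $t=0$ as an exponential priority server, invoke interchangeability of exponential queues in series, couple the two ring processes so that they share clocks outside the permuted block and use the interchangeability coupling inside it, and handle the feedback around the ring by induction on jump times using the non-anticipating nature of the coupling. Your reduction to adjacent transpositions and two-site blocks $B=\{j,j+1\}$ is a harmless variant (the paper instead treats all of $\{\ell+1,\dots,n\}$ as one block and cites the $k$-server permutation version directly), and your feedback/induction step matches the paper's outline.

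The genuine gap is in your justification of the \emph{multispecies} refinement of Weber's theorem. You claim that ``the label carried by each departure is a deterministic functional of the input labels and those point processes,'' where the point processes include the unlabelled departure streams \emph{at both stages}. That functional dependence is correct, but it is useless here: the intermediate departure stream (from the first server of the tandem) is \emph{not} invariant under swapping the two service rates, so matching the labelled inputs and the unlabelled final outputs does not force the labels on the final output to match. Concretely, if a weak customer arrives at time $t_1$ and a strong one at time $t_2>t_1$, then whether the first stage serves the weak customer before or after $t_2$ determines which of the two reaches the second server first, and hence which label rides the first final departure --- even when the arrival times and the final departure times are held fixed. Your fallback, applying the colouring of \cref{prop:colouring} at every priority threshold, gives only the single-class interchangeability of each colour-class's departure times \emph{marginally}; producing one coupling that works \emph{simultaneously} for all thresholds (``taking the resulting single-class couplings consistently'') is precisely the nontrivial content of the multiclass interchangeability theorem of Martin and Prabhakar, which is what the paper invokes as \cref{prop:interchange}. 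The proof is repaired simply by citing that result (in its two-server form) rather than attempting to derive it from the single-class statement; as written, neither of your two suggested derivations closes this step.
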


Observe that the conclusion of this result implies that of Theorem \ref{thm:TAZRP-symmetry} (for the case $t=0$).
From the result above, we have in particular that 
the distribution at any fixed time $s$ of the configuration
on sites $1,\dots,\ell$ is invariant under permutation of the 
parameters $x_{\ell+1}, \dots, x_n$. However, that time-$s$ 
distribution converges as $s\to\infty$ to the stationary distribution. 
Hence the stationary distribution itself must also be 
invariant in the same way, which is the content of 
Theorem \ref{thm:TAZRP-symmetry}. By taking the same limit,
we can consider pathwise events for the mTAZRP in equilibrium. 
For example, Theorem \ref{thm:TAZRP-symmetry} tells
us that for the system in equilibrium, the probability 
that site $1$ is empty at time $0$ is symmetric in 
$x_2,\dots, x_n$; for $t=0$, we now have that the same is
true for the event that site $1$ is empty throughout
any given time interval $[0,T]$.

When $t=0$, we can think of site $i$ of the TASEP
as an ``exponential queueing server'', 
with service priority according to particle class.
It has some arrival process (formed of particles moving
from site $i-1$). It offers service at times
of a Poisson process of rate $x_i^{-1}$. Every time a service
occurs, if the queue is not empty, a particle of the highest 
type present is ``served'' and moves on to site $i+1$
(cyclically mod $n$). The departure process of site $i$
becomes the ``arrival process'' of site $i+1$, and so on. 
The Poisson processes at different sites
are independent. 

An \textit{interchangeability} result for exponential 
queueing servers in series was first proved for single-type systems
by Weber \cite{Weber1979}. Consider a system consisting of two 
exponential queueing servers in series, with the departure process
of the first becoming the arrival process of the second. 
Fix any arrival process to the first server.
Consider two versions of the system, one in which the service 
rates at queues $1$ and $2$ are $\mu_1$ and $\mu_2$ respectively,
and another in which the rates are swapped. 
The interchangeability result says that the distribution
of the departure process from the second server is 
the same in both versions of the system. 

By induction, the interchangeability result
is easily extended to permuting the rates 
of $k\geq 2$ independent exponential queueing servers in series. 

Using a coupling approach originating with Tsoucas and Walrand
\cite{TsoucasWalrand1987}, Martin and Prabhakar 
\cite{MartinPrabhakar2010}
extended 
the result to multi-class queues with priority. These are queues
which behave like the sites of the mTAZRP as described just above. 
For a system of queues in series with arrival process $A$ at the first 
queue and service processes $S_1, \dots, S_k$, we write 
$D(A, S_1, S_2, \dots, S_k)$ for the departure process from 
the final queue. Section 2.6 and Section 6 of \cite{MartinPrabhakar2010} give the following result:
\begin{prop}\label{prop:interchange}
Let $\mu_1, \mu_2, \dots, \mu_k>0$. Let $(\sigma_1, \dots, \sigma_k)$ 
be any permutation of $[k]$. 

There is a coupling of:
\begin{itemize}
\item[(a)]
independent Poisson processes $S_1, S_2, \dots, S_k$
with rates $\mu_1, \mu_2, \dots, \mu_k$ respectively, and
\item[(b)]
independent Poisson processes $\tS_1, \tS_2, \dots, \tS_k$
with rates $\mu_{\sigma_1}, \mu_{\sigma_2},\dots, \mu_{\sigma_k}$
respectively,
\end{itemize}
such that for all arrival processes $A$,
\[
D(A, S_1, S_2, \dots, S_k)
=
D(A, S_{\sigma_1}, S_{\sigma_2},\dots, S_{\sigma_k}).
\]
\end{prop}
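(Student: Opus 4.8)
The plan is to reduce the general statement to the case of two servers in series and then invoke the interchangeability coupling for a pair of exponential priority queues. First I would observe that it suffices to treat the case where $(\sigma_1,\dots,\sigma_k)$ is an adjacent transposition, say of $i$ and $i+1$: an arbitrary permutation is a product of such transpositions, and a coupling realizing each transposition — while leaving the remaining clocks $S_j$, $j\notin\{i,i+1\}$, untouched and hence still independent Poisson — can be composed to realize the permutation, the intermediate departure processes agreeing at each stage. For an adjacent transposition, the servers $1,\dots,i-1$ turn the arrival process $A$ and the clocks $S_1,\dots,S_{i-1}$ into some arrival process $A'$ feeding the pair $(i,i+1)$, and the servers $i+2,\dots,k$ act as a deterministic (measurable) functional of the class-labelled departure process of server $i+1$; so everything comes down to coupling $(S_i,S_{i+1})$ with service rates $(\mu_i,\mu_{i+1})$ to a pair $(\tS_i,\tS_{i+1})$ with rates $(\mu_{i+1},\mu_i)$, independent Poisson, in such a way that for \emph{every} arrival process the class-labelled departure process of the two-server subsystem is unchanged.

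Second, for the two-server case I would construct the explicit pathwise coupling of Tsoucas and Walrand \cite{TsoucasWalrand1987} in the single-class case together with its multi-class priority extension due to Martin and Prabhakar \cite{MartinPrabhakar2010}. Running the tandem on a fixed arrival process $A$, every point of $S_i$ (resp.\ $S_{i+1}$) is classified as \emph{effective} (a customer actually advances) or \emph{idle} (the relevant queue was empty at that instant); the coupling map re-reads these marked realizations and produces $(\tS_i,\tS_{i+1})$ by reassigning effective and idle epochs to the two swapped-rate clocks according to the recipe in \cite{MartinPrabhakar2010}. The two things to verify are: (i) the class-labelled departure process of the pair is literally the same function of $A$ under $(S_i,S_{i+1})$ and under $(\tS_i,\tS_{i+1})$; and (ii) the map sends the law of two independent Poisson processes of rates $(\mu_i,\mu_{i+1})$ to that of two independent Poisson processes of rates $(\mu_{i+1},\mu_i)$. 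For (ii), the point is that the map rearranges realizations within level sets on which the relevant conditional law is exchangeable — this is exactly where the exponential/memoryless assumption is used, the analogous statement being false for general service-time distributions — so that the pushforward has the stated product-Poisson form. For the priority discipline in (i), Martin and Prabhakar reduce a priority queue to the nested family of single-class queues ``customers of class $\ge r$'', $r=1,2,\dots$, all driven by the \emph{same} service clock, apply the single-class interchangeability consistently across all these levels, and check that the resulting coupling preserves not merely particle counts but the full class-labelled output stream.

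The main obstacle is precisely the construction and verification of this two-server coupling: producing a single deterministic, measure-preserving map that simultaneously preserves the departure process at every level of the priority hierarchy. This is the substantive content of \cite[\S2.6 and \S6]{MartinPrabhakar2010}, so in the write-up I would carry out the reduction of the first paragraph in full and then quote their coupling as \cref{prop:interchange}, rather than reprove it. Everything else — the induction over adjacent transpositions, the measurability of the ``downstream servers'' functional, and the remark that composing couplings keeps the untouched clocks independent — is routine.
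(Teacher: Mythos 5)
The paper offers no proof of \cref{prop:interchange} at all: it is stated as a quotation of Sections 2.6 and 6 of \cite{MartinPrabhakar2010}, with the single-class antecedents of \cite{Weber1979} and \cite{TsoucasWalrand1987} recalled beforehand. Your plan of ultimately deferring the hard two-server coupling to that reference is therefore consistent with what the authors do. The genuinely additional content in your write-up is the reduction from an arbitrary permutation to adjacent transpositions and from an adjacent transposition to the two-server subsystem; this is sound (it is essentially the induction the paper alludes to for the single-class case but never writes out), and it buys you the ability to cite only the two-server case of \cite{MartinPrabhakar2010} rather than their $k$-server statement. The composition of couplings across transpositions and the independence of the untouched clocks are routine, as you say.

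The one point that needs real care is the quantifier over arrival processes. For your reduction step --- servers $1,\dots,i-1$ turn $A$ and $S_1,\dots,S_{i-1}$ into an arrival process $A'$ feeding the swapped pair --- to be legitimate, the two-server coupling you invoke must be a deterministic, measure-preserving map of the pair $(S_i,S_{i+1})$ alone, \emph{not} depending on the arrival process, because $A'$ is random (a functional of the upstream clocks); and in the paper's application (\cref{thm:stronger-symmetry}) the geometry is a ring, so the arrival process to the swapped block is not even independent of the clocks being swapped. Your sketch of the coupling mechanism --- classifying service epochs as effective or idle relative to a fixed $A$ and reassigning them --- produces an $A$-dependent coupling and hence only the weaker statement that for each $A$ a coupling exists. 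The uniform-in-$A$ version, which is what \cref{prop:interchange} asserts and what the application requires, is precisely the content of Section 6 of \cite{MartinPrabhakar2010}; make sure that is the statement you quote, and that your adjacent-transposition reduction is phrased so as to preserve this uniformity.
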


Proposition \ref{prop:interchange} can be used
to construct a coupling which establishes Theorem 
\ref{thm:stronger-symmetry}. We outline the idea, which involves
coupling two mTAZRP systems. Sites $1,\dots, \ell$ 
have parameters $x_1,\dots, x_\ell$ in both systems. 
The remaining sites have parameters $x_{\ell+1},\dots, x_n$ 
in the first system, and $\tx_{\ell+1}, \dots, \tx_n$ in the second system, 
where $(\tx_{\ell+1},\dots, \tx_n)$ is some permutation
of $(x_{\ell+1},\dots, x_n)$. 

Both systems have the same initial configuration at time $0$. 
We can then construct them via the service processes at each site. 
We will use the same service processes in both systems at each of the 
sites $1,\dots,\ell$ (which are Poisson processes of rates $x_1^{-1}, \dots, x_{\ell}^{-1}$). Meanwhile we use 
Proposition \ref{prop:interchange} to couple 
the service processes at sites $\ell+1,\dots,n$
between the two systems, to give Poisson processes of rates
$x_{\ell+1}^{-1},\dots, x_n^{-1}$ in the first system 
and of rates $\tx_{\ell+1}^{-1},\dots, \tx_n^{-1}$ in the second system).

In the initial configuration, sites $\ell+1, \dots, n$ 
are empty. The first point in the ``arrival process'', 
namely the process of jumps from site $\ell$ to site $\ell+1$, 
occurs at the same time in both systems, because the 
service processes at sites $1,\dots, \ell$ are identical 
between the two systems. From then on that same coupling,
together with the coupling between the service processes 
at sites $\ell+1, \dots, n$ provided by Proposition \ref{prop:interchange}, ensures that the arrival process
and the ``departure process'' (the process of jumps from 
site $n$ to site $1$) stay identical between the two
systems. The configuration restricted to the sites $1,\dots,\ell$
is always identical between the two systems 
(although of course the configuration on sites $\ell+1, \dots, n$
may differ). 

We believe that the equivalent of Proposition \ref{prop:interchange}
also holds in the case $t>0$. In this case, at each queue $i$,
a service of level $a$ would occur at rate $\mu_i t^{a-1}$; 
when such a service occurs, the $a$th highest-priority customer
present departs (unless there are fewer than $a$ customers present,
in which case nothing happens). If such a generalisation 
of Proposition \ref{prop:interchange} holds, 
then the stronger symmetry property of 
Theorem \ref{thm:stronger-symmetry} could also be extended
to $t>0$.

\subsection{Densities and currents}
\label{sec:dens-curr}

In this section, we will compute densities and currents in the mTAZRP.
The main idea here is to derive the results for the single-species TAZRP and use the coloring idea stated formally in \cref{prop:colouring}.

Consider the mTAZRP with particle content given by $\lambda$ and with $n$ sites. We now assume without loss of generality that $\lambda$ is compressed, so that it is of the form $ \langle 1^{m_1}, 2^{m_2}, \dots, k^{m_k} \rangle$ with all $m_i$'s positive.

Let $\tau^{(j)}_i$ be the random variable counting the number of particles of type $j$ at site $i$ in a configuration of the mTAZRP. By convention, we will denote the expectation in the stationary distribution by $\langle \cdot \rangle$. 
The density of a given species (say $j$) at a given site (say $i$) is defined to be the expected number of particles of type $j$ at site $i$ in the stationary distribution and is thus given by $\langle \tau^{(j)}_i \rangle$. 

Note the following cyclic symmetry property, which is immediate from the definition of the process. 
\begin{prop}
\label{prop:trans-cov}
The mTAZRP with partition $\lambda$ on $n$ sites is invariant under simultaneous translation of the sites $i \to i+1$ for $1 \leq i \leq n-1$ and $n \to 1$, and relabelling of the site-dependent parameters $x_i \to x_{i+1}$ for $1 \leq i \leq n-1$ and $x_n \to x_1$.
\end{prop}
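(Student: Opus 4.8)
The plan is to prove this by exhibiting an explicit bijection of the state space that conjugates the Markov generator with parameters $(x_1,\dots,x_n)$ to the generator with the cyclically shifted parameters. Once that conjugacy is established, the stated invariance of the stationary distribution --- indeed of the law of the whole process from any initial condition --- follows immediately, since these are irreducible chains on a finite state space and hence have a unique stationary distribution.

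First I would set up the shift. Define $\vartheta\colon\TAZRP(\lambda,n)\to\TAZRP(\lambda,n)$ by $\vartheta(w)=(w_n,w_1,\dots,w_{n-1})$, i.e.\ $\vartheta(w)_j=w_{j-1}$ with site indices read cyclically modulo $n$. Since $\vartheta$ merely permutes the multiset-valued coordinates of $w$, it preserves the constraint $\biguplus_{j=1}^n w_j=\lambda$, so it is a well-defined bijection of $\TAZRP(\lambda,n)$. Write $x'=(x_n,x_1,\dots,x_{n-1})$ for the rate vector obtained by moving the parameter attached to site $i$ to site $i+1$ cyclically, i.e.\ $x'_j=x_{j-1}$; together with $\vartheta$ this is exactly the simultaneous relabelling $i\to i+1$, $x_i\to x_{i+1}$ in the statement.

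Next I would check the rate-matching identity. A transition of the mTAZRP with parameters $(x_1,\dots,x_n)$ from $w$ consists of choosing a site $j$ and a particle type $r$ present at site $j$ and moving one type-$r$ particle from site $j$ to site $j+1$, at rate $x_j^{-1}\,t^{d_r}(1+t+\cdots+t^{c_r-1})$, where $c_r$ and $d_r$ are the numbers of particles of type $r$, resp.\ of type strictly larger than $r$, at site $j$ of $w$. Applying $\vartheta$: the occupation multiset of site $j$ of $w$ is the occupation multiset of site $j+1$ of $\vartheta(w)$, and the image transition moves a type-$r$ particle from site $j+1$ to site $j+2$ of $\vartheta(w)$, landing precisely in $\vartheta(w')$ where $w'$ is the result of the original transition. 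Under the parameters $x'$ the rate of this image transition is $(x'_{j+1})^{-1}t^{d_r}(1+\cdots+t^{c_r-1})$, and $x'_{j+1}=x_j$, so it equals the original rate. Conversely every transition of the $x'$-process arises this way. Hence $\vartheta$ intertwines the two generators, so for any initial distribution the pushforward under $\vartheta$ of the law of the $x$-process equals the law of the $x'$-process; specializing to the stationary distribution gives the proposition, and iterating $\vartheta$ recovers invariance under the full cyclic group $\mathbb{Z}/n\mathbb{Z}$, as used in \cref{sec:dens-curr}.

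As the remark ``immediate from the definition'' indicates, there is no genuine obstacle here; the only care required is the bookkeeping of cyclic site indices --- in particular verifying that the single prescription $\vartheta(w)_j=w_{j-1}$, $x'_j=x_{j-1}$ correctly handles the wraparound site $n\to 1$ and parameter $x_n\to x_1$ --- and making precise the sense in which ``invariance'' is meant, namely equality of the $\vartheta$-pushed-forward law with the law of the reparametrised process.
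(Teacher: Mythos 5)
Your proof is correct and is precisely the verification the paper deems ``immediate from the definition'': the paper offers no argument beyond that remark, and your explicit shift bijection $\vartheta$ together with the rate-matching check $x'_{j+1}=x_j$ is exactly the bookkeeping being waved at. Nothing is missing and nothing differs in substance.
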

As an immediate consequence, we can deduce the following result for the densities.
\begin{prop}
\label{prop:dens-transinv}
Suppose $\langle \tau^{(j)}_1 \rangle = r(x_1,\dots,x_n)$. Then for any $i$, 
$\langle \tau^{(j)}_i \rangle = r(x_i,\dots,x_n,x_1, \allowbreak \dots,x_{i-1})$.
\end{prop}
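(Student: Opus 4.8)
The plan is to deduce \cref{prop:dens-transinv} from \cref{prop:trans-cov} by iterating the cyclic covariance $i-1$ times. Throughout, write $\langle\,\cdot\,\rangle_{(x_1,\dots,x_n)}$ for the stationary expectation of the mTAZRP on $\TAZRP(\lambda,n)$ with site parameters $(x_1,\dots,x_n)$, and recall that $\tau^{(j)}_i$ counts the species-$j$ particles at site $i$.

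First I would extract the single-step consequence of \cref{prop:trans-cov}. Applying the inverse of the symmetry in \cref{prop:trans-cov} (which is again a symmetry), the translation $i\mapsto i-1$ sends a configuration $w=(w_1,\dots,w_n)$ to $\widetilde w=(w_2,w_3,\dots,w_n,w_1)$ and carries the stationary distribution with parameters $(x_1,\dots,x_n)$ to the stationary distribution with parameters $(x_2,x_3,\dots,x_n,x_1)$. Since the contents of site $i$ in $w$ become the contents of site $i-1$ in $\widetilde w$, we have $\tau^{(j)}_{i-1}(\widetilde w)=\tau^{(j)}_{i}(w)$; summing this identity against the stationary distribution, and using that $w\mapsto\widetilde w$ is a bijection of configuration space, yields
\[
\big\langle \tau^{(j)}_{i}\big\rangle_{(x_1,\dots,x_n)}
= \big\langle \tau^{(j)}_{i-1}\big\rangle_{(x_2,x_3,\dots,x_n,x_1)},
\]
where site indices are read cyclically modulo $n$.

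Next I would iterate this identity $i-1$ times (equivalently, run a short induction on $i$, the case $i=1$ being the hypothesis itself). Each application lowers the distinguished site index by one and cyclically shifts the parameter vector one place to the left, so after $i-1$ steps the site index has decreased to $1$ and the parameter vector has become $(x_i,x_{i+1},\dots,x_n,x_1,\dots,x_{i-1})$. Since the hypothesis $\langle \tau^{(j)}_1\rangle_{(x_1,\dots,x_n)}=r(x_1,\dots,x_n)$ is an identity of functions, evaluating it at this shifted parameter vector gives
\[
\big\langle \tau^{(j)}_{i}\big\rangle_{(x_1,\dots,x_n)}
= \big\langle \tau^{(j)}_{1}\big\rangle_{(x_i,x_{i+1},\dots,x_n,x_1,\dots,x_{i-1})}
= r(x_i,x_{i+1},\dots,x_n,x_1,\dots,x_{i-1}),
\]
which is exactly the asserted formula.

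The argument is elementary, and the only place where I would be careful is the \emph{direction} of the cyclic shift: one must check that translating the sites by $i\mapsto i+1$ together with relabelling $x_i\mapsto x_{i+1}$ moves each site's parameter forward along with its contents, so that the correct single-step identity has the site index of $\tau^{(j)}$ decreasing while the parameter vector moves to the \emph{left} (rather than the reverse). It is worth verifying this once on a small example, say $n=3$; after that, the remainder is routine reindexing.
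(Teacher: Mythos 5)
Your proof is correct and follows the same route the paper intends: the paper derives this proposition as an immediate consequence of the cyclic covariance in \cref{prop:trans-cov}, and your argument is simply that deduction written out in full (the single-step identity $\langle \tau^{(j)}_{i}\rangle_{(x_1,\dots,x_n)} = \langle \tau^{(j)}_{i-1}\rangle_{(x_2,\dots,x_n,x_1)}$, iterated $i-1$ times). Your attention to the direction of the cyclic shift is well placed and your version checks out.
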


By \cref{prop:dens-transinv}, it suffices to compute the densities of all species of particles at the first site. 
It turns out that there is a uniform formula. 
We first focus on the case of single species, defined in \cref{sec:single}. Consider the TAZRP with $m$ particles on a ring of size $n$.

\begin{prop}
\label{prop:dens}
The density at the first site is given by
\[
\langle \tau^{(1)}_1 \rangle = 
\frac{x_1 \partial_{x_1} 
\widetilde{H}_{\langle 1^{m} \rangle}(x_1,\dots,x_n; 1, t)}
{\widetilde{H}_{\langle 1^{m} \rangle}(x_1,\dots,x_n; 1, t)}
= x_1 \partial_{x_1} \log \widetilde{H}_{\langle 1^{m} \rangle}(x_1,\dots,x_n; 1, t).
\]
\end{prop}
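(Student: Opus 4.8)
The plan is to read the density straight off the explicit product-type stationary distribution of the single-species TAZRP established in \cref{prop:single-species-prob}, using the elementary observation that multiplying a monomial by the occupation number $\eta_1$ is the same as applying the Euler operator $x_1\partial_{x_1}$.

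First I would express the density as a sum over configurations. By \cref{prop:single-species-prob}, for $\lambda=\langle 1^m\rangle$ the stationary probability of $\eta=(\eta_1,\dots,\eta_n)$ with $\eta_1+\dots+\eta_n=m$ is
\[
\PP(\eta_1,\dots,\eta_n)=\frac{1}{\widetilde{H}_{\langle 1^m\rangle}(x_1,\dots,x_n;1,t)}\qbinom{m}{\eta_1,\dots,\eta_n}_t\prod_{i=1}^n x_i^{\eta_i},
\]
so that
\[
\langle\tau^{(1)}_1\rangle=\sum_{\eta_1+\dots+\eta_n=m}\eta_1\,\PP(\eta_1,\dots,\eta_n)=\frac{1}{\widetilde{H}_{\langle 1^m\rangle}(X;1,t)}\sum_{\eta_1+\dots+\eta_n=m}\eta_1\qbinom{m}{\eta_1,\dots,\eta_n}_t\prod_{i=1}^n x_i^{\eta_i}.
\]
Next I would use the term-by-term identity $\eta_1\prod_i x_i^{\eta_i}=x_1\partial_{x_1}\prod_i x_i^{\eta_i}$ and pull $x_1\partial_{x_1}$ out of the finite sum, obtaining
\[
\sum_{\eta_1+\dots+\eta_n=m}\eta_1\qbinom{m}{\eta_1,\dots,\eta_n}_t\prod_{i=1}^n x_i^{\eta_i}=x_1\partial_{x_1}\Bigl(\sum_{\eta_1+\dots+\eta_n=m}\qbinom{m}{\eta_1,\dots,\eta_n}_t\prod_{i=1}^n x_i^{\eta_i}\Bigr).
\]
Finally I would identify the inner sum with $\widetilde{H}_{\langle 1^m\rangle}(X;1,t)$: this is exactly the monomial expansion \eqref{eq:Ht-monomialsum} written out over weak compositions, equivalently the fact (from \cref{thm:TAZRP-stat-distn}) that the stationary weights in \cref{prop:single-species-prob} sum to the partition function. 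Substituting back gives
\[
\langle\tau^{(1)}_1\rangle=\frac{x_1\partial_{x_1}\widetilde{H}_{\langle 1^m\rangle}(X;1,t)}{\widetilde{H}_{\langle 1^m\rangle}(X;1,t)}=x_1\partial_{x_1}\log\widetilde{H}_{\langle 1^m\rangle}(X;1,t).
\]

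There is no real obstacle here: the entire argument is a one-line manipulation once \cref{prop:single-species-prob} is available. The only points requiring a word of care are that every sum in sight is finite, so interchanging $x_1\partial_{x_1}$ with the summation is immediate, and that the normalizing constant in \cref{prop:single-species-prob} is literally the denominator in the claimed formula, which is precisely what turns the computation into a logarithmic derivative. The general-species density formula is then deduced from this single-species case via the colouring/lumping of \cref{prop:colouring}, but that is the content of the subsequent results rather than of this proposition.
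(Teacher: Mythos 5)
Your proposal is correct and is essentially identical to the paper's proof: both start from the explicit stationary distribution of \cref{prop:single-species-prob}, replace $\eta_1\prod_i x_i^{\eta_i}$ by $x_1\partial_{x_1}\prod_i x_i^{\eta_i}$, and use linearity of the derivative to recognize the sum as $x_1\partial_{x_1}\widetilde{H}_{\langle 1^m\rangle}(X;1,t)$. No gaps.
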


\begin{proof}
By definition of the density, 
\begin{align*}
\langle \tau^{(1)}_1 \rangle = & \sum_\tau \tau_1 \pi(\tau) \\
=& \frac{1}{\widetilde{H}_{\langle 1^{m} \rangle}(x_1,\dots,x_n; 1, t)}
\sum_\tau \tau_1 \qbinom m{\tau_1,\dots, \tau_n}_t\ \prod_{i=1}^n x_i^{\tau_i} \\
=& \frac{1}{\widetilde{H}_{\langle 1^{m} \rangle}(x_1,\dots,x_n; 1, t)}
\sum_\tau \qbinom m{\tau_1,\dots, \tau_n}_t\ x_1 
\partial_{x_1} \left( \prod_{i=1}^n x_i^{\tau_i} \right),
\end{align*}
where we have used \cref{prop:single-species-prob} in the second line.
The linearity of the derivative completes the proof. 
\end{proof}

\begin{thm}
\label{thm:dens}
For $1 \leq j \leq k$, the density of the $j$'th species at the first site is given by
\[
\langle \tau^{(j)}_1 \rangle 
= x_1 \partial_{x_1} \log \left( 
\frac{\widetilde{H}_{\langle 1^{m_j + \cdots + m_k} \rangle}(x_1,\dots,x_n; 1, t)}
{\widetilde{H}_{\langle 1^{m_{j+1} + \cdots + m_k} \rangle}(x_1,\dots,x_n; 1, t)}
\right),
\]
where the denominator inside the logarithm is $1$ if $j = k$.
\end{thm}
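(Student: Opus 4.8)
The plan is to reduce everything to the single-species case handled in \cref{prop:dens}, using the coloring/lumping construction of \cref{prop:colouring}. Write $M_j := m_j + m_{j+1} + \cdots + m_k$ for the number of particles of species at least $j$, so that $M_1$ is the total particle number and, by the empty-sum convention, $M_{k+1} = 0$. For each $j \in \{1,\dots,k\}$ I would apply the two operations used in the proof of \cref{prop:colouring}: first discard all particles of species strictly less than $j$ (which does not affect the evolution of the remaining particles, since the jump rate of a particle depends only on the particles of equal or higher species at its site), and then relabel all the remaining particles, those of species $j, j+1, \dots, k$, as a single species $1$ (which preserves the total jump rate of that collection at every site). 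The result is the single-species TAZRP with $M_j$ particles on $n$ sites; when $j = 1$ only the relabeling step is needed, and when $j = k$ no relabeling is needed.

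Next I would use the standard fact that if a Markov chain lumps to another, then the stationary distribution of the former pushes forward to the stationary distribution of the latter. The observable $N_{\ge j} := \sum_{a=j}^{k}\tau^{(a)}_1$, the number of particles of species at least $j$ at site $1$, is a function of the lumped state: it is exactly the occupation number of site $1$ in the single-species process with $M_j$ particles. Hence, taking expectations in the respective stationary distributions and applying \cref{prop:dens} with $m = M_j$,
\[
\big\langle N_{\ge j} \big\rangle = x_1 \partial_{x_1} \log \widetilde{H}_{\langle 1^{M_j}\rangle}(x_1,\dots,x_n; 1, t),
\]
the right-hand side being $0$ when $M_j = 0$, consistent with $\widetilde{H}_{\langle 1^0\rangle} = 1$.

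Finally I would conclude by telescoping. Since $\tau^{(j)}_1 = N_{\ge j} - N_{\ge j+1}$ and $M_{j+1} = m_{j+1} + \cdots + m_k$, linearity of expectation together with $\log(a/b) = \log a - \log b$ gives
\[
\big\langle \tau^{(j)}_1 \big\rangle = x_1 \partial_{x_1} \log \widetilde{H}_{\langle 1^{M_j}\rangle} - x_1 \partial_{x_1} \log \widetilde{H}_{\langle 1^{M_{j+1}}\rangle} = x_1 \partial_{x_1} \log \left( \frac{\widetilde{H}_{\langle 1^{M_j}\rangle}(x_1,\dots,x_n; 1, t)}{\widetilde{H}_{\langle 1^{M_{j+1}}\rangle}(x_1,\dots,x_n; 1, t)} \right),
\]
which is the asserted formula, with the denominator inside the logarithm equal to $1$ precisely when $j = k$, i.e.\ when $M_{j+1} = 0$.

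I do not anticipate a genuine obstacle: the one point deserving care is the justification that $N_{\ge j}$ really is a function of the lumped chain and that the stationary measure projects correctly, so that \cref{prop:dens} applies verbatim to $\langle N_{\ge j}\rangle$; after that, everything is bookkeeping with the quantities $M_j$ and the two edge cases $j = 1$ (no particles discarded) and $j = k$ (where $M_{j+1} = 0$).
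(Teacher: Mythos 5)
Your proposal is correct and follows essentially the same route as the paper: reduce to the single-species case via the coloring/lumping of \cref{prop:colouring}, apply \cref{prop:dens} to get $\langle \tau^{(j)}_1 + \cdots + \tau^{(k)}_1\rangle$, and subtract the analogous quantity for species $j+1$ through $k$. Your version merely spells out the lumping justification and the edge cases in more detail than the paper does.
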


\begin{proof}
The proof uses the coloring argument from \cref{prop:colouring}. First, we can construct a projection involving particles of species $j$ through $k$ to obtain a single-species TAZRP with $m_j + \cdots + m_k$ particles. From \cref{prop:dens}, the sum of densities,
$\langle \tau^{(j)}_1 + \cdots + \tau^{(k)}_1 \rangle$ is given by
$x_1 \partial_{x_1} \log \widetilde{H}_{\langle 1^{m_j + \cdots + m_k} \rangle}(x_1,\dots,x_n; 1, t)$.
Since we are only interested in species $j$, we can subtract off 
$\langle \tau^{(j+1)}_1 + \cdots + \tau^{(k)}_1 \rangle$ using the same argument to obtain our formula.
\end{proof}

Since the density is expressed as a Macdonald polynomial, we immediately see that it is symmetric in the variables $\{x_2,\ldots,x_n\}$. This also follows immediately from the $\ell=1$ case of 
\cref{thm:symm probs}.

Currents are important observables in statistical physics because they are a prime indicator of the lack of reversibility in the system.
The \emph{current} of a certain species of particle from site $i$ to site $j$ is defined as the number of particles of that species
jumping from $i$ to $j$ per unit of time in the large-time limit,
or in the stationary state. The current from $i$ to $i+1$ is the same for all sites $i$ because of particle conservation. We will derive explicit formulas for the current of all species of particles.

We begin with the single-species case. The current is then given by the  following result.

\begin{prop}
\label{prop:curr}
For the single-species TAZRP on $n$ sites with $m$ particles, the current is
\[
J = [m]_t \frac{\widetilde{H}_{\langle 1^{m-1} \rangle}(x_1,\dots,x_n; 1, t)}{\widetilde{H}_{\langle 1^{m} \rangle}(x_1,\dots,x_n; 1, t)}.
\]
\end{prop}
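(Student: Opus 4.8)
The plan is to compute the current directly as a stationary expectation of the total outgoing jump rate at a single site, using the product-like formula for the stationary distribution from \cref{prop:single-species-prob}. Since the current from site $i$ to site $i+1$ is the same for every $i$ by particle conservation, I will work at site $1$. When the configuration is $\eta = (\eta_1,\dots,\eta_n)$ with $\eta_1 + \cdots + \eta_n = m$, the total rate at which a particle leaves site $1$ is $x_1^{-1}(1 + t + \cdots + t^{\eta_1 - 1}) = x_1^{-1}[\eta_1]_t$ (and this is $0$ when $\eta_1 = 0$, consistently). Therefore
\[
J = \Big\langle x_1^{-1}[\tau_1^{(1)}]_t \Big\rangle
= \frac{1}{x_1 \widetilde{H}_{\langle 1^m \rangle}(x_1,\dots,x_n;1,t)}
\sum_{\eta} [\eta_1]_t \, \qbinom{m}{\eta_1,\dots,\eta_n}_t \prod_{i=1}^n x_i^{\eta_i},
\]
where the sum runs over weak compositions $\eta$ of $m$ with $n$ parts.

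The key step is a $t$-multinomial identity: for $\eta_1 \geq 1$,
\[
[\eta_1]_t \, \qbinom{m}{\eta_1,\eta_2,\dots,\eta_n}_t
= [m]_t \, \qbinom{m-1}{\eta_1 - 1,\eta_2,\dots,\eta_n}_t .
\]
This follows from the standard identity $\qbinom{m}{a}_t = \frac{[m]_t}{[a]_t}\qbinom{m-1}{a-1}_t$ applied with $a = \eta_1$, together with the factorization $\qbinom{m}{\eta_1,\dots,\eta_n}_t = \qbinom{m}{\eta_1}_t \qbinom{m-\eta_1}{\eta_2,\dots,\eta_n}_t$ and the matching factorization of the right-hand side; the factors $\qbinom{m-\eta_1}{\eta_2,\dots,\eta_n}_t$ agree on both sides. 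Substituting this into the sum, writing $\eta_1' = \eta_1 - 1$ and noting that the $\eta_1 = 0$ terms contribute nothing, I get
\[
\sum_{\eta} [\eta_1]_t \, \qbinom{m}{\eta_1,\dots,\eta_n}_t \prod_{i=1}^n x_i^{\eta_i}
= [m]_t \, x_1 \sum_{\substack{\eta'_1 + \eta_2 + \cdots + \eta_n = m-1}}
\qbinom{m-1}{\eta'_1,\eta_2,\dots,\eta_n}_t x_1^{\eta'_1}\prod_{i=2}^n x_i^{\eta_i}
= [m]_t\, x_1\, \widetilde{H}_{\langle 1^{m-1}\rangle}(x_1,\dots,x_n;1,t),
\]
where the last equality is \cref{thm:TAZRP-stat-distn} applied with $m-1$ particles (equivalently the monomial expansion \eqref{eq:Ht-monomialsum} of $\widetilde{H}_{\langle 1^{m-1}\rangle}$). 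Dividing by $x_1 \widetilde{H}_{\langle 1^m\rangle}(x_1,\dots,x_n;1,t)$ gives the claimed formula.

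The only real obstacle is the bookkeeping in the $t$-multinomial identity and making sure the $\eta_1 = 0$ boundary case is handled correctly (it is: $[\,0\,]_t = 0$), but this is routine once the factorizations are set up. An alternative, essentially equivalent route would be to argue via \cref{prop:dens} and a discrete continuity/master-equation relation expressing the current in terms of $\langle [\tau_1^{(1)}]_t\rangle$; but the direct computation above is cleaner.
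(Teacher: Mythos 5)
Your proposal is correct and follows essentially the same route as the paper's proof: both compute $J$ as the stationary expectation of $x_1^{-1}[\eta_1]_t$, apply the $t$-multinomial identity $[\eta_1]_t\qbinom{m}{\eta_1,\dots,\eta_n}_t=[m]_t\qbinom{m-1}{\eta_1-1,\dots,\eta_n}_t$ (the paper reaches it by first splitting off $\qbinom{m}{j}_t$ and then invoking the recursion \eqref{eq:expand}), and recognize the resulting sum as $\widetilde{H}_{\langle 1^{m-1}\rangle}(x_1,\dots,x_n;1,t)$ via the monomial expansion. The only difference is cosmetic bookkeeping, so no further comparison is needed.
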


\begin{proof}
For concreteness we will focus on the jumping of particles from site $1$ to site $2$. Recall that states of the single species TAZRP can be represented by $\tau = (\tau_1,\dots,\tau_n) \models m$, weak compositions of $m$ with $n$ parts. From the definition of the current,
we have that
\[
J = \sum_{\substack{\tau \\ \tau_1 > 0 \\ \tau_1+\cdots+\tau_n=m}} \frac{[\tau_1]_t}{x_1} \pi(\tau),
\]
where the sum runs over all configurations of the $m$ particles amongst the $n$ sites such that the first site is not empty. The formula for the stationary probability $\pi(\tau)$ is given in \cref{prop:single-species-prob}.
We refine the above sum according to the number of particles at site $1$,
and plug in the formula for $\pi(\tau)$ to get
\begin{align*}
J =& \sum_{j=1}^m \frac{[j]_t \, x_1^{j-1}}{\widetilde{H}_{\langle 1^{m} \rangle}(x_1,\dots,x_n; 1, t)} 
\sum_{\substack{\tau_2,\ldots,\tau_n\geq 0 \\ j+\tau_2+\cdots+\tau_n=m}} 
\qbinom m{j, \tau_2,\dots, \tau_n}_t \, \prod_{i=2}^n x_i^{\tau_i} \\
=& \sum_{j=1}^m \frac{[j]_t \, x_1^{j-1}}{\widetilde{H}_{\langle 1^{m} \rangle}(x_1,\dots,x_n; 1, t)} \qbinom {m}{j}_t
\sum_{\substack{\tau_2,\ldots,\tau_n\geq 0\\\tau_2+\cdots+\tau_n=m-j }} 
\qbinom {m-j}{\tau_2,\dots, \tau_n}_t \, \prod_{i=2}^n x_i^{\tau_i}\\
=& \frac{[m]_t}{\widetilde{H}_{\langle 1^{m} \rangle}(x_1,\dots,x_n; 1, t)} 
\sum_{j=1}^m \qbinom {m-1}{j-1}_t x_1^{j-1}
\sum_{\substack{\tau_2,\ldots,\tau_n\geq 0\\ \tau_2+\cdots+\tau_n=m-j}} 
\qbinom {m-j}{\tau_2,\dots, \tau_n}_t \, \prod_{i=2}^n x_i^{\tau_i}.
\end{align*}
Now, using the recursion \eqref{eq:expand} for the modified Macdonald polynomials at $q=1$ given in \cref{it:Ht-recursion}, we obtain our result. 
\end{proof}

We can now prove the general result for currents of all species. Let
$\lambda = \langle 1^{m_1},\dots,k^{m_k} \rangle$.

\begin{thm}
\label{thm:curr}
Consider the multispecies TAZRP with content $\lambda$ on $n$ sites. Then,
for $1 \leq j \leq k$, the current of particle of species $j$ is given by
\[
[m_j + \cdots + m_k]_t \frac{ \widetilde{H}_{\langle 1^{m_j + \cdots + m_k - 1} \rangle}}
{\widetilde{H}_{\langle 1^{m_j + \cdots + m_k}\rangle}}
- [m_{j+1} + \cdots + m_k]_t 
\frac{\widetilde{H}_{\langle 1^{m_{j+1} + \cdots + m_k-1}\rangle}}
{\widetilde{H}_{\langle 1^{m_{j+1} + \cdots + m_k}\rangle}},
\]
where the modified Macdonald polynomials are evaluated at the variables $x_1,\dots,x_n$ with $q=1$.
\end{thm}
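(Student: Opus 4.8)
The plan is to run exactly the colouring argument used for \cref{thm:dens}, but with \cref{prop:curr} in place of \cref{prop:dens}. Write $J^{(i)}$ for the current of species-$i$ particles across the bond from site $1$ to site $2$ in the stationary distribution; since particles never change species, this is, like the single-species current, independent of the chosen bond. Also write $J_{\langle 1^{m}\rangle}$ for the current in the single-species TAZRP with $m$ particles on $n$ sites, as computed in \cref{prop:curr}.

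The key step is the identity
\[
\sum_{i=j}^{k} J^{(i)} \;=\; J_{\langle 1^{m_j+\cdots+m_k}\rangle}.
\]
To see this, apply \cref{prop:colouring}: discarding all particles of species $<j$ and relabelling every particle of species in $\{j,\dots,k\}$ as a single species produces a single-species TAZRP with $m_j+\cdots+m_k$ particles. By the argument in the proof of \cref{prop:colouring}, relabelling an interval of species preserves the \emph{total} jump rate of the relabelled collection at each site, so in every configuration the instantaneous rate at which a relabelled particle crosses the bond $1\to2$ equals the total rate at which a particle of species $\ge j$ crosses that bond in the original process; in particular this rate is constant on each lumping class. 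Taking stationary expectations, and using that the stationary law of the mTAZRP pushes forward to that of the (ergodic) lumped single-species chain, yields the displayed identity. The same reasoning with $j$ replaced by $j+1$ gives $\sum_{i=j+1}^{k} J^{(i)} = J_{\langle 1^{m_{j+1}+\cdots+m_k}\rangle}$.

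Now I would substitute the formula of \cref{prop:curr}, so that
\[
\sum_{i=j}^{k} J^{(i)} = [m_j+\cdots+m_k]_t\,\frac{\widetilde{H}_{\langle 1^{m_j+\cdots+m_k-1}\rangle}}{\widetilde{H}_{\langle 1^{m_j+\cdots+m_k}\rangle}}, \qquad
\sum_{i=j+1}^{k} J^{(i)} = [m_{j+1}+\cdots+m_k]_t\,\frac{\widetilde{H}_{\langle 1^{m_{j+1}+\cdots+m_k-1}\rangle}}{\widetilde{H}_{\langle 1^{m_{j+1}+\cdots+m_k}\rangle}},
\]
with all modified Macdonald polynomials evaluated at $x_1,\dots,x_n$ with $q=1$. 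Subtracting the second identity from the first leaves $J^{(j)}$ on the left and exactly the claimed expression on the right. When $j=k$ the second sum is empty and $[0]_t=0$, so the subtracted term vanishes, matching the statement.

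The one point that requires care — and the only place where the proof is more than bookkeeping — is the justification of the displayed identity $\sum_{i\ge j}J^{(i)} = J_{\langle 1^{m_j+\cdots+m_k}\rangle}$, i.e.\ that the current of the lumped chain computed in \emph{its} stationary distribution coincides with $\sum_{i\ge j}J^{(i)}$ computed in the stationary distribution of the \emph{full} mTAZRP. This is precisely the lumpability of \cref{prop:colouring} applied to the event ``some species-$\ge j$ particle jumps from site $i$ to $i+1$'', and I would spell it out (constancy of this rate on lumping classes, push-forward of the stationary measure, summing rates against it) so that the reduction to \cref{prop:curr} is clearly legitimate. Everything else is elementary manipulation of $t$-integers.
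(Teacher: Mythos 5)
Your proposal is correct and follows essentially the same route as the paper: the paper's proof is exactly the colouring argument of \cref{prop:colouring} combined with \cref{prop:curr}, obtaining the total current of species $j$ through $k$ and subtracting that of species $j+1$ through $k$. Your additional care in justifying that the lumped chain's stationary current equals $\sum_{i\ge j}J^{(i)}$ is a welcome elaboration of a step the paper treats as immediate.
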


\begin{proof}
This is again a straightforward application of the colouring argument in \cref{prop:colouring}. We first obtain the total current of particles $j$ through $k$ by treating these as $1$'s using \cref{prop:curr}, and subtract the current of particles $j+1$ through $k$ in the same way.
\end{proof}

\subsection{The distribution at the top of the diagram}
\label{sec:top}

In this section, we consider a consistency property 
between tableaux of different shapes. 
We will show that under the stationary distribution
of the tableau process, the distribution of entries in
a given set of rows at the top of the tableau does 
not depend on the shape of the rows below. 

Suppose $\lambda$ has $\ell$ parts and $\mu$ is formed from $\lambda$ by deleting the bottom $k$ rows 
of $\lambda$. That is, $\mu = ((\lambda_1 - k)_+, \dots, (\lambda_\ell - k)_+)$. 
Given a filling $\xi\in\Tab(\lambda,n)$, there is a natural way to  derive from it a filling $\sigma=\xi|_\mu\in\Tab(\mu,n)$ by ignoring the $k$ bottom rows; that is, by setting $\xi|_\mu(r,c)=\xi(r+k, c)$.

\begin{thm}
If $\xi$ is distributed according to the stationary 
distribution of the tableau Markov chain on $\Tab(\lambda,n)$,
and $\sigma=\xi|_\mu$, then $\sigma$
is distributed according to the stationary 
distribution of the tableau Markov chain on $\Tab(\mu,n)$. 
\end{thm}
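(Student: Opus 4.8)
The plan is to reduce the statement to a single combinatorial identity for $\quinv$, using that both stationary distributions are known explicitly.

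By \cref{thm:maintheorem} and \cref{prop:tab process irred} (irreducibility, hence a unique stationary law), the stationary distribution of the tableau chain on $\Tab(\lambda,n)$ is $\xi\mapsto\wt(\xi)/\widetilde H_\lambda(X;1,t)$, and that on $\Tab(\mu,n)$ is $\sigma\mapsto\wt(\sigma)/\widetilde H_\mu(X;1,t)$. The law of $\sigma=\xi|_\mu$ gives $\sigma$ the mass $\widetilde H_\lambda(X;1,t)^{-1}\sum_{\xi|_\mu=\sigma}\wt(\xi)$, so it is enough to establish the \emph{fibre identity}: there is a Laurent polynomial $c_{\lambda/\mu}$ in $t,x_1,\dots,x_n$, \textbf{independent of $\sigma$}, with $\sum_{\xi\in\Tab(\lambda,n):\,\xi|_\mu=\sigma}\wt(\xi)=c_{\lambda/\mu}\,\wt(\sigma)$ for all $\sigma$. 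Indeed, then the pushforward law is $\bigl(c_{\lambda/\mu}/\widetilde H_\lambda(X;1,t)\bigr)\wt(\sigma)$, a probability measure proportional to $\wt(\sigma)$, hence equal to $\wt(\sigma)/\widetilde H_\mu(X;1,t)$. (Summing over $\sigma$ forces $c_{\lambda/\mu}=\widetilde H_\lambda/\widetilde H_\mu=\prod_{j=1}^{k}\widetilde H_{\langle 1^{\lambda'_j}\rangle}(X;1,t)$ via \eqref{eq:Ht-factor}, but only $\sigma$-independence is needed.)

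Since deleting $k$ rows is a composition of single-row deletions and the fibre sums compose, I would reduce to $k=1$. So let $\mu$ be $\lambda$ with its bottom row removed, put $\ell=\lambda'_1$ and $m=\lambda'_2$ (the bottom row of $\dg(\lambda)$ is $(1,1),\dots,(1,\ell)$, and columns $1,\dots,m$ are those of height $\ge 2$), and fix $\sigma\in\Tab(\mu,n)$ with bottom row $b=(\sigma(1,1),\dots,\sigma(1,m))$ --- which is row $2$ of every extension. For $\rho\in[n]^\ell$ let $\xi_\rho$ be the extension with bottom row $\rho$; then $x^{\xi_\rho}=x^\sigma x^\rho$ and $\quinv(\xi_\rho)=\quinv(\sigma)+q(b,\rho)$, where $q(b,\rho)$ counts the $\quinv$ triples of $\xi_\rho$ meeting row $1$, namely
\[
q(b,\rho)=\#\{(i,j):1\le i\le m,\ i<j\le\ell,\ (b_i,\rho_i,\rho_j)\in\cQ\}+\#\{(i,j):m<i<j\le\ell,\ \rho_i<\rho_j\}.
\]
The fibre identity thus amounts to: $E(b):=\sum_{\rho\in[n]^\ell}t^{q(b,\rho)}x^\rho$ does not depend on $b\in[n]^m$.

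This last claim is the crux. When $m=0$ it holds because $q(\emptyset,\rho)=\inv(\rho)$ and $\sum_{\rho\in[n]^\ell}t^{\inv(\rho)}x^\rho=\widetilde H_{\langle 1^\ell\rangle}(x_1,\dots,x_n;1,t)$ by \cref{thm:H} (a single-row shape has $\maj\equiv 0$). For $m\ge 1$ I would induct downward on $m$: converting column $m$ of $\lambda$ to a height-one column keeps $\ell$ fixed, sends $m\mapsto m-1$, and alters $q(b,\rho)$ only through the $i=m$ terms, replacing $\sum_{m<j\le\ell}\one[(b_m,\rho_m,\rho_j)\in\cQ]$ by $\sum_{m<j\le\ell}\one[\rho_m<\rho_j]$ (every triple with $j=m$, and all triples among the other columns, are untouched). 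So it suffices to show that this replacement preserves $\sum_\rho t^{q}x^\rho$ for every value of $b_m$ --- equivalently, to exhibit a weight-preserving bijection of $[n]^\ell$ carrying the ``tall-$m$'' statistic to the ``short-$m$'' statistic. This is the main obstacle: the identity is false term by term in $\rho$, so the bijection (or a sign-reversing involution, or an adaptation of the LLT-expansion argument of \cref{lem:C} writing $E(b)$ as a $b$-independent nonnegative combination of LLT polynomials in $x_1,\dots,x_n$) will need a careful global construction. I would also note that a direct dynamical proof is not available: a transition $R'_u$ with $u$ in the top block $\mu$ does act on $\mu$ exactly as the corresponding transition of the $\Tab(\mu,n)$-chain, but its rate $x_{\xi(u)}^{-1}t^{\down(\xi,u)}$ --- and even whether it is enabled --- depends on the deleted rows when $u$ lies in the lowest surviving row, and transitions triggered inside the deleted rows can propagate upward into $\mu$; hence $\xi\mapsto\xi|_\mu$ does not lump the chain, and the combinatorial route above appears to be the natural one.
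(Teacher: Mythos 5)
Your reduction is correct and coincides with the paper's own: both arguments reduce to deleting a single bottom row, and then to the claim that the fibre sum over bottom rows $\rho$ equals $\widetilde{H}_{\langle 1^{\ell}\rangle}(X;1,t)\cdot\wt(\sigma)$, independently of the row $b$ sitting immediately above (only that row matters, since quinv triples span at most two consecutive rows). However, you stop exactly where the actual work begins: you correctly isolate the crux --- that $E(b)=\sum_{\rho\in[n]^{\ell}}t^{q(b,\rho)}x^{\rho}$ is independent of $b$ --- and then declare it ``the main obstacle'' without proving it. Since that identity \emph{is} the theorem (everything before it is bookkeeping), the proposal as written is not a proof.

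The paper closes this gap as follows. Refine $E(b)$ by the content $(m_1,\dots,m_n)$ of $\rho$; it then suffices to show that for each fixed content the quinv generating function over bottom rows equals the $t$-multinomial coefficient $\qbinom{\ell}{m_1,\dots,m_n}_t$, independent of $b$. This is proved by induction, but in the opposite direction to the one you propose: rather than shortening column $m$, one prepends a height-two column on the left, with new top entry $a$ and new bottom entry $k$, and computes the additional quinv contribution of the two new cells by a three-way case analysis ($k<a$: the entries cyclically between $k$ and $a$ contribute; $k=a$: all entries except those equal to $a$ contribute; $k>a$: the entries larger than $k$ or smaller than $a$ contribute). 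Summing over $k$ against the inductive hypothesis and applying the recursion $\qbinom{\ell}{m_1,\dots,m_n}_t=\sum_{k=1}^{n}t^{m_{k+1}+\cdots+m_n}\qbinom{\ell-1}{m_1,\dots,m_k-1,\dots,m_n}_t$ three times collapses the expression to $\qbinom{\ell}{m_1,\dots,m_n}_t$, which is visibly independent of $a$, hence of $b$ (the base case $m=0$ is the coinversion count, as you note). If you wish to salvage your downward induction on $m$ you would have to actually construct the weight-preserving bijection you allude to; the paper's computation suggests it is easier to avoid any bijection and work directly with the multinomial recursion.
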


\begin{example}
Let $\lambda = (5,4,2,1,1)$ and $k = 2$ so that $\mu = (3,2)$ and let
\[
\sigma = 
\tableau{
1 \\
2 & 5 \\
3 & 2}
\quad \in \Tab(\mu,5).
\]
Then $\sigma=\xi|_\mu$ for any filling $\xi\in\Tab(\lambda,5)$ 
which has the pattern
\[
\tableau{
1 \\
2 & 5 \\
3 & 2 \\
\cdot &\cdot &\cdot \\
\cdot &\cdot &\cdot &\cdot &\cdot }
\]
\end{example}

\begin{proof}
It suffices to prove this for $k = 1$, i.e.~deleting the bottom row in the diagram of $\lambda$. Since $\lambda$ has length $\ell$, this amounts to proving
\begin{equation}
\label{wt-sum}
\sum_{w \in [n]^\ell} \wt \begin{pmatrix}
\sigma \\
w
\end{pmatrix}
=
\tilde{H}_{1^\ell}(x_1,\dots,x_n; 1, t) \wt(\sigma),
\end{equation}
where we have appealed to \eqref{eq:Ht-factor} for the right hand side.
Note that the weight on the left hand side depends on the content of the tableau and the quinv statistic, which only affects two consecutive rows. Thus $w$ affects the weight of the composite tableaux only through the bottom row of $\sigma$.
Thus, it suffices to restrict attention to two-row tableaux. The general structure of a two-row tableau is
\[
\ytableausetup{boxsize=2.6em}
\begin{ytableau}
t_1 & \dots & t_m \\
w_1 & \dots & w_m & w_{m+1} & \dots & w_\ell
\end{ytableau}
\]
where the bottom row has length $\ell$ and the top row has length $m$, say.
We will now prove \eqref{wt-sum} for such two-row tableaux using induction on $m$. Before proving this, we note that we can further refine the sum in \eqref{wt-sum} according to the content of $w$. If $w$ has $m_1$ $1$'s, $m_2$ $2$'s, and so on, then the weight of $w$ is proportional to $x_1^{m_1} x_2^{m_2} \cdots x_n^{m_n}$. Thus, we need to consider only the contribution of the quinv statistic. From the structure of the the modified Macdonald polynomial in 
\eqref{eq:Ht-monomialsum}, we have to show that this is the $t$-multinomial coefficient $\begin{bmatrix}
\ell \\ m_1,\dots,m_n \end{bmatrix}_t$, which is symmetric under any permutation of the $m_i$'s.

The base case of the induction is $m = 0$. In that case, the only contribution to the quinv statistic is from degenerate pairs $(w_i, w_j)$, where $i < j$ and $w_i < w_j$. This is the same as the number of coinversions and it is well-known that the sum of $t^{\coinv(w)}$ gives the desired $t$-multinomial coefficient.

We now suppose that $m, \ell > 0$ and we assume the result to hold when there are $m-1$ rows on top and $\ell - 1$ rows at the bottom. We then add extra boxes to the left on both rows. Suppose the top-left box is occupied by $a$ and the bottom-left, by $k$. Then the extra contribution to the quinv statistic from these two boxes depends on the relative order of $a$ and $k$ and there are three cases:

\begin{enumerate}
\item $k < a$: all elements between $k$ and $a$ contribute.

\item $k = a$: all elements except $a$ contribute.

\item $k > a$:  all elements larger than $k$ or smaller than $a$ contribute.
\end{enumerate}

Recall that we have supposed the content of $w$ to be $(m_1,\dots,m_n)$. Using the induction hypothesis, the total contribution is
\begin{multline}
\sum_{k=1}^{a-1} t^{m_{k+1} + \cdots + m_{a-1}} 
\qbinom{\ell-1}{m_1,\dots,m_k-1,\dots,m_n}_t 
+ t^{\ell - m_a} \qbinom{\ell - 1}{m_1,\dots,m_a-1,\dots,m_n}_t \\
+ \sum_{k=a+1}^n t^{m_1 + \cdots + m_{a-1} + m_{k+1} + \cdots + m_n}
\qbinom{\ell - 1}{m_1,\dots,m_k-1,\dots,m_n}_t.
\end{multline}
Keeping only relevant factors inside the sum, we get
\begin{multline}
\label{qbinom-sum}
\qbinom{\ell - 1}{m_a + \cdots + m_n}_t 
\qbinom{m_a + \cdots + m_n}{m_a,\dots, m_n}_t 
\sum_{k=1}^{a-1} t^{m_{k+1} + \cdots + m_{a-1}} \qbinom{m_1 + \cdots + m_{a-1} - 1}{m_1,\dots,m_k-1,\dots,m_{a-1}}_t \\
+ t^{\ell - m_a} \qbinom{\ell - 1}{m_1,\dots,m_a-1,\dots,m_n}_t 
+ t^{m_1 + \cdots + m_{a-1}} \qbinom{\ell - 1}{m_1 + \cdots + m_a}_t \\
\times \qbinom{m_1 + \cdots + m_a}{m_1,\dots, m_a}_t
\sum_{k=a+1}^n t^{m_{k+1} + \cdots + m_n}
\qbinom{m_{a+1} + \cdots + m_n - 1}{m_{a+1},\dots,m_k-1,\dots,m_n}_t.
\end{multline}

We now recall the fundamental recursion for the $t$-multinomial coefficients:
\begin{equation}
\label{qbinom-recur}
\qbinom{\ell}{m_1,\dots,m_n}_t 
= \sum_{k=1}^n t^{m_{k+1} + \cdots + m_n}
\qbinom{\ell - 1}{m_1,\dots,m_{k}-1,\dots,m_n}.
\end{equation}
Using \eqref{qbinom-recur}, the first and last sums in \eqref{qbinom-sum}
can be evaluated to obtain
\begin{multline}
\qbinom{\ell - 1}{m_a + \cdots + m_n}_t 
\qbinom{m_a + \cdots + m_n}{m_a,\dots, m_n}_t 
\qbinom{m_1 + \cdots + m_{a-1}}{m_1,\dots,m_{a-1}}_t \\
+ t^{\ell - m_a} \qbinom{\ell - 1}{m_1,\dots,m_a-1,\dots,m_n}_t \\
+ t^{m_1 + \cdots + m_{a-1}} \qbinom{\ell - 1}{m_1 + \cdots + m_a}_t 
\qbinom{m_1 + \cdots + m_a}{m_1,\dots, m_a}_t
\qbinom{m_{a+1} + \cdots + m_n}{m_{a+1},\dots,m_n}_t.
\end{multline}
Taking two $t$-multinomial coefficient factors common, we obtain
\begin{multline}
\qbinom{m_1 + \cdots + m_{a-1}}{m_1,\dots,m_{a-1}}_t
\qbinom{m_{a+1} + \cdots + m_n}{m_{a+1},\dots,m_n}_t 
\Bigg( \qbinom{\ell - 1}{m_a,m_{a+1} + \cdots + m_n, m_1 + \cdots + m_{a-1}-1}_t  \\
+ t^{\ell - m_a} \qbinom{\ell - 1}{m_a-1,m_{a+1} + \cdots + m_n, m_1 + \cdots + m_{a-1}}_t \\
+ t^{m_1 + \cdots + m_{a-1}} \qbinom{\ell - 1}{m_a,m_{a+1} + \cdots + m_n-1, m_1 + \cdots + m_{a-1}}_t 
\Bigg).
\end{multline}
The sum of the three $t$-multinomial coefficients can again be performed using \eqref{qbinom-recur}, and we simplify to obtain
\[
\qbinom{\ell}{m_1,\dots,m_n}_t,
\]
completing the proof.
\end{proof}

\section{A multiline process in bijection with the tableau process}
\label{sec:multiline}
Our tableau process is very closely related to the 
``multiline diagrams'' (or ``multiline queues'') that have been used
to study the stationary distributions of several multi-species 
interacting particle systems 
(starting with the TASEP in \cite{FM07}), and to give formulas
for Macdonald polynomials in \cite{CMW18}.

Fix $n\in\NN$, and let $\lambda$ have largest part $\lambda_1=L$. For $1\leq k \leq L$, define $\lambda^{(k)}=(u \in \lambda \mid u\geq k)$ to be the restriction of $\lambda$ to parts of size at least $k$. 
For such a partition $\lambda$, a multiline diagram of type $(\lambda, n)$ consists of configurations on a lattice with $L$ rows and $n$ columns, such that row $k$ contains $\lambda'_k$ particles with the labels $\lambda^{(k)}$ for $1 \leq j \leq L$.
Each lattice site may contain any number of particles, with particles of the same type being indistinguishable. In particular, one may think of each individual row of the diagram as an element of the set of mTAZRP configurations $\TAZRP(\lambda^{(k)},n)$, defined in \cref{sec:tazrp}. All the labels of particles in row $k$ are
in $[k,L]$. The multiset of labels in row $k-1$ is obtained
by adding some particles of type $k-1$ to the multiset for row $k$. 

Represent a multiline diagram of type $\lambda,n$ by a configuration
\[
(M^{(L)},\ldots,M^{(2)},M^{(1)})\in \TAZRP(\lambda^{(L)},n)\times\cdots\times\TAZRP(\lambda^{(2)},n)\times\TAZRP(\lambda^{(1)},n).
\]
Then row $k$ of the diagram is given by the configuration $M^{(k)}\in \TAZRP(\lambda^{(k)},n)$. 
\begin{example}\label{ex:multiline}
Let $\lambda=(4,3,1)$ and fix $n=5$. Then $\lambda^{(4)}=(4)$, $\lambda^{(3)}=\lambda^{(2)}=(4,3)$, and $\lambda^{(1)}=(4,3,1)$. The diagram below is an example of a multiline diagram of type $(\lambda,n)$, where for visual convenience we pair the particles of the same labels between adjacent rows:
\begin{center}
\begin{tikzpicture}[scale=0.5]
\def \h {1.5}
\def \w {2}
\def \r {.37cm}
\def \rr {.4}

\foreach \i in {0,...,4}
{
\draw[gray!50] (0+.5*\w,\i*\h-.5*\h)--(\w*5+.5*\w,\i*\h-.5*\h);
}
\foreach \i in {1,...,4}
{
\node at (-1,\i*\h-\h) {\tiny \normalfont{row} $\i$};
}
\foreach \i in {0,...,5}
{
\draw[gray!50] (\w*\i+.5*\w,-.5*\h)--(\w*\i+.5*\w,3.5*\h);
}
\foreach \i in {1,...,5}
{
\node at (\i*\w,-1.5) {\tiny $\i$};
}

 \draw (3*\w,3*\h) circle (\r) node[color=black] {$4$};
\draw[thick,->,black] (3*\w, 3*\h-\rr)--(3*\w,2.5*\h)--(2*\w+\rr,2.5*\h)--(2*\w+\rr, 2*\h+\rr); 

\draw (2*\w-\rr,2*\h) circle (\r) node[color=black] {$3$};
\draw (2*\w+\rr, 2*\h) circle (\r) node[color=black] {$4$};
\draw[thick,->,black] (2*\w+\rr, 2*\h-\rr)--(2*\w+\rr,1.5*\h)--(.5*\w-.3, 1.5*\h); 
\draw[thick,->,black] (5.5*\w+.3, 1.5*\h)--(5*\w,1.5*\h)--(5*\w,\h+\rr); 
\draw[thick,->,black] (2*\w-\rr, 2*\h-\rr)--(2*\w-\rr,1.5*\h+.2)--(1*\w,1.5*\h+.2)--(1*\w,\h+\rr); 

\draw (5*\w,\h) circle (\r) node[color=black] {$4$};
\draw (1*\w,\h) circle (\r) node[color=black] {$3$};
\draw[thick,->,black] (1*\w, 1*\h-\rr)--(1*\w,.5*\h)--(.5*\w-.3, .5*\h); 
\draw[thick,->,black] (5.5*\w+.3, .5*\h)--(4*\w+\rr,.5*\h)--(4*\w+\rr,0*\h+\rr); 
\draw[thick,->,black] (5*\w, 1*\h-\rr)--(5*\w,.5*\h+.2)--(3*\w,.5*\h+.2)--(3*\w,0*\h+\rr); 

\draw (3*\w,0) circle (\r) node[color=black] {$4$};
\draw (4*\w-.4,0) circle (\r) node[color=black] {$1$};
\draw (4*\w+.4,0) circle (\r) node[color=black] {$3$};

\end{tikzpicture}
\end{center}
As a tuple of TAZRP states, the $x$-monomial is $x^M=x_1x_2^2x_3^2x_4^2x_5$, and the configuration is
\[
M=\big((\cdot|\cdot|4|\cdot|\cdot),\ (\cdot|43|\cdot|\cdot|\cdot),\ (3|\cdot|\cdot|\cdot|4),\ (\cdot|\cdot|4|31|\cdot)\big)
\]
\end{example}

We now describe a map $\cM$ from the set $\Tab(\lambda, n)$ of fillings to the set of multi-line diagrams of type $\lambda, n$. 
For $\sigma\in\Tab(\lambda,n)$, denote row $j$ of $\sigma$ by $\sigma_{(j)}$ for each $j$. Then define $\mathcal{M}(\sigma) = (M^{(L)},\ldots,M^{(2)},M^{(1)})$, where $M^{(j)} = \proj(\sigma_{(j)})$, $\proj$ being the function mapping fillings to states of the TAZRP in \cref{def:f}.

From here on we will focus on the case where $\lambda$ is a 
strict partition, i.e.\ where the mTAZRP has at most one 
particle of any given species. (See the end of the section 
for comments on the more general case.) 

When $\lambda$
is a strict partition, the map $\cM$ is a bijection, 
and we can describe its inverse as follows. 
For a configuration $w^{(k)}\in\TAZRP(\lambda^{(k)}, n)$
which contains a particle of type $r$, 
denote by $p_r(w^{(k)})$ the position of this particle. 
Now let 
$M=(M^{(L)},\ldots,M^{(2)},M^{(1)})$ be a multiline diagram of type $(\lambda,n)$. The filling $\sigma=\cM^{-1}(M)\in\Tab(\lambda,n)$ 
can then be described in the following way. For each cell $(k,j)\in\dg(\lambda)$, set $\sigma(k,j)=p_{\lambda_j}(M^{(k)})$. In other words, the content of $\sigma$ in cell $(k,j)$  
records the position of the (unique) particle with label $\lambda_j$ in $M^{(k)}$. In this way, the positions of the particles in the (unique) string of type $\lambda_j$ in $M$ are recorded in the $j$'th column of $\sigma$.

\begin{example}\label{ex:Msigma}
If $M$ is the multiline diagram in \cref{ex:multiline},
then the unique tableau $\sigma$ with $M=\cM(\sigma)$ is 
\[\sigma=\raisebox{14pt}{\tableau{3\\2&2\\5&1\\3&4&4}}
\]. 
\end{example}

Now we define the weight $\wt(M)$ of a multiline diagram $M$, which is a monomial in $x_1,\ldots,x_n$ and $t$. The $x$-monomial, denoted by $x^M$, is given by the product
$\displaystyle x^M=\prod_{j=1}^n x_j^{c_j(M)}$, where $c_j(M)$ is the total number of particles in column $j$ of $M$ across all the rows.

To get the power of $t$, we will define the notion of 
a \textit{refusal} in the multiline diagram, motivated by
a queueing interpretation that we explain below. 
First, some useful notation: when the numbers $1$ through $n$ are arranged clockwise on a circle, for $a,b,c \in [n]$, we say that $a<b<c$ if $b$ is strictly between $a$ and $c$ when reading clockwise, including the case when $a=c\neq b$. 
Let $\bI[f]$ be the 
\emph{indicator function} of $f$,
which equals $1$ if $f$ is true and $0$ otherwise. For a configuration $w^{(k)}\in\TAZRP({\lambda}^{(k)},n)$, denote by $p_r(w^{(k)})$ the position of the particle with type $r$ in $w^{(k)}$. 

Now, consider a pair of rows $k$ and $k-1$ in $M$. For the configuration $(M^{(k)}, M^{(k-1)})$ in those two rows, the \emph{number of refusals} $R(M^{(k)}, M^{(k-1)})$ between $M^{(k)}$ and $M^{(k-1)}$ is defined as follows. Let
\begin{equation}\label{Rdef}
R(M^{(k)}, M^{(k-1)})=
\sum_{k-1\leq s<r\leq L}
\bI\big[p_r(M^{(k-1)})<p_s(M^{(k-1)})<p_r(M^{(k)})\big],
\end{equation}
(where the sum 
is over $(r,s)$ such that $r$ appears as a label
in row $k$ and $s$ appears as a label in row $k-1$). 
Finally, define
\[
\wt(M) =  x^m \,t^{\sum_{k=2}^L R(M^{(k)},M^{(k-1)})}.
\]

\begin{remark}
We can think of the two-line sub-diagram of a multiline diagram $M$ restricted to the rows $k,k-1$ as a \emph{queue} consisting of an arrival process $M^{(k)}$ and a departure process $M^{(k-1)}$. 
In this way the entire multiline diagram consists of a 
system of $L-1$ queues in series. ``Time'' is indexed by $[n]$, is cyclic, and moves from right to left. The queueing 
discipline gives particles with higher labels priority over
those with lower labels. 

For the queue consisting of rows $k$ and $k-1$, 
the arrival time of a particle $r\in[k,L]$ 
is its position $p_r(M^{(k)})$ in row $k$, and its departure time 
is its position $p_r(M^{(k-1)})$ in row $k-1$. 
The position $p_{k-1}(M^{(k-1)})$ of a particle of type $k-1$ in 
row $k-1$ is an ``unused service time". 
Our convention is that if the particle arrives at a position $j$, the first time it can depart is $j-1$. The strings connecting the particles between adjacent rows (as shown in Example 
\ref{ex:multiline})
illustrate the corresponding arrival and service times of the particle with a particular label. 

Consider $c<b<a$. 
Suppose a particle of a given species arrives at $a$ and departs at $c$, but at $b$, a weaker species departs or an unused service time occurs.
Then the service time was ``available'' to the stronger particle and was ``refused'' by it. This corresponds precisely to
the pattern 
$p_r(M^{(k-1)}) < p_s(M^{(k-1)}) < p_r(M^{(k)}$
for $s<r$ counted by \eqref{Rdef}. Hence we call such a triple 
of positions a 
\emph{refusal} in \eqref{Rdef}. Each such refusal contributes
a factor of $t$ to the weight of the configuration. 
\end{remark}

\begin{example}\label{ex:refusals}
In \cref{ex:Msigma}, $x^M=x_1x_2^2x_3^2x_4^2x_5$.
We have $R(M^{(4)},M^{(3)}) = 0$ and $R(M^{(3)},M^{(2)}) = 1$ 
since particle $3$ are served instead of particle $4$
at time $1$, giving
$p_4(M^{(3)}) < p_3(M^{(3)}) < p_4(M^{(4)})$; 
and $R(M^{(2)},M^{(1)}) = 2$
since both particles $1$ and $3$ are served instead of particle $4$
at time $4$,
giving 
$p_4(M^{(1)}) < p_3(M^{(1)}) < p_4(M^{(2)})$ and 
$p_4(M^{(1)}) < p_1(M^{(1)}) < p_4(M^{(2)})$. This gives 
a total of $3$ refusals, and so $\wt(M)=x_1x_2^2x_3^2x_4^2x_5 t^3$.
\end{example}

\begin{lemma}
If $\sigma$ is a filling and $M=\cM(\sigma)$ is its corresponding
multiline diagram, then
\[\wt(M) =t^{\quinv(\sigma)}x^{\sigma}=\wt(\sigma).\]
\end{lemma}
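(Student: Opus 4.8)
The plan is to compare $\wt(M)$ and $\wt(\sigma)$ term by term according to the contributions coming from each pair of adjacent rows. First, the $x$-monomial: the content $x^\sigma=\prod_{u\in\dg(\lambda)}x_{\sigma(u)}$, and for a strict partition each cell $(k,j)$ of $\sigma$ records the position of the unique particle of type $\lambda_j$ in row $k$ of $M$, so the number of cells of $\sigma$ in column of content $i$ across row $k$ equals the number of particles sitting at site $i$ in $M^{(k)}$. Summing over $k$ gives $c_i(M)=\#\{u:\sigma(u)=i\}$, hence $x^M=x^\sigma$. This is the routine part.

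The heart of the argument is the identity $\quinv(\sigma)=\sum_{k=2}^{L}R(M^{(k)},M^{(k-1)})$. I would prove this by localizing both sides to consecutive rows. On the tableau side, every (non-degenerate or degenerate) quinv triple of $\dg(\lambda)$ lives in two adjacent rows $k-1,k$ (the triple $((k,i),(k-1,i),(k-1,j))$ or a degenerate triple in row $k-1$); write $\quinv_k(\sigma)$ for the number of quinv triples whose lower row is $k-1$, so that $\quinv(\sigma)=\sum_{k=2}^{L}\quinv_k(\sigma)$. It therefore suffices to show, for each $k$, that
\[
\quinv_k(\sigma)=R(M^{(k)},M^{(k-1)}).
\]
Now fix the two rows. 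The cells in row $k-1$ in columns $j$ with $\lambda_j\ge k-1$ carry the entries $p_s(M^{(k-1)})$ for the labels $s$ present in row $k-1$, ordered along the row by column index; the cells in row $k$ carry $p_r(M^{(k)})$ for labels $r$ present in row $k$; and because $\lambda$ is strict, the column index records exactly the label, with larger labels strictly to the left. A triple $((k,i),(k-1,i),(k-1,j))$ with $i<j$ has top entry $a=\sigma(k,i)=p_{\lambda_i}(M^{(k)})$ (or $a=0$ if the column has height $k-1$, the degenerate case), middle entry $b=\sigma(k-1,i)=p_{\lambda_i}(M^{(k-1)})$, and right entry $c=\sigma(k-1,j)=p_{\lambda_j}(M^{(k-1)})$. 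Since $i<j$ and $\lambda$ is strict we have $\lambda_i>\lambda_j$, i.e. the label $r:=\lambda_i$ appearing in row $k$ is strictly larger than $s:=\lambda_j$ appearing in row $k-1$, and $k-1\le s<r\le L$. By \cref{def:quinv} (via membership in $\cQ$, reading the entries cyclically on $[n]$ as in the definition of refusals), the triple $(a,b,c)$ with $a=p_r(M^{(k)})$, $b=p_r(M^{(k-1)})$, $c=p_s(M^{(k-1)})$ is a quinv triple precisely when $c<b<a$ cyclically, i.e. when $p_r(M^{(k-1)})<p_s(M^{(k-1)})<p_r(M^{(k)})$ in the clockwise sense — and this is exactly the indicator summed in \eqref{Rdef}. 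The degenerate case $a=0$ corresponds to $\lambda_i=k-1$, where the column $i$ has no cell in row $k$, hence the particle of type $r=\lambda_i$ does not appear in row $k$; but then such a pair is not summed in \eqref{Rdef} either (the sum is over $r$ appearing in row $k$), so these pairs contribute $0$ on both sides once one checks the cyclic convention makes $b<c\iff$ the degenerate triple is quinv correspond to a pair with no "arrival", which by our convention is not counted as a refusal. Conversely every pair $(r,s)$ counted in \eqref{Rdef} arises from a unique such triple of columns. This establishes $\quinv_k(\sigma)=R(M^{(k)},M^{(k-1)})$, and summing over $k$ finishes the proof.

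The main obstacle I anticipate is the careful handling of the cyclic ($\mathrm{mod}\ n$) reading convention together with tie-breaking by reading order, and making the degenerate-triple bookkeeping line up exactly with the "$r$ appears as a label in row $k$" restriction in \eqref{Rdef}; in particular one must check that the case $a=p_r(M^{(k)})=c=p_s(M^{(k-1)})$ (forced equality forbidden since these are positions of distinct particles in the same row, so it cannot occur) and the boundary case where a column of $\sigma$ of height exactly $k-1$ contributes a degenerate triple are matched on the multiline side by the queueing convention "arrival at $j$, first possible departure at $j-1$." Once the dictionary between the cyclic order on $[n]$ used in \eqref{Rdef} and the membership condition in $\cQ$ of \cref{def:quinv} is nailed down, the rest is a direct column-by-column translation.
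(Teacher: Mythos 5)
Your proposal follows essentially the same route as the paper: $x^M=x^\sigma$ is immediate from the definitions, and the $t$-exponents match because the triples of $\sigma$ whose lower row is $k-1$ correspond bijectively to the pairs $(r,s)$ in the sum \eqref{Rdef} defining $R(M^{(k)},M^{(k-1)})$, with a triple lying in $\cQ$ exactly when the corresponding refusal indicator equals $1$; summing over $k$ gives $\quinv(\sigma)=\sum_k R(M^{(k)},M^{(k-1)})$. Two small points. First, your phrase ``precisely when $c<b<a$ cyclically'' contradicts the condition you state immediately afterwards, namely $p_r(M^{(k-1)})<p_s(M^{(k-1)})<p_r(M^{(k)})$, which in the paper's cyclic notation is $b<c<a$; the latter is the one that matches \eqref{Rdef}, so the slip is purely notational. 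Second, your treatment of degenerate triples is both muddled and unnecessary: the assertion that such a pair ``contributes $0$ on both sides'' is false in general (a degenerate triple with $b<c$ contributes $1$ to $\quinv$ while never being counted as a refusal), but the case is vacuous here, since a degenerate triple requires cells $(r,i)$ and $(r,j)$ with $i<j$, $\lambda_i=r$ and $\lambda_j\ge r$, which forces $\lambda_j\ge\lambda_i$ with $j>i$ and is impossible for a strict partition. Observing this vacuity is the clean way to dispose of the degenerate case; the paper's proof simply never encounters it.
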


\begin{proof}
This is straightforward to check. First, $x^{M} = x^{\sigma}$ by definition, since for each site $j$ and row $k$, $M_{j}^{(k)}$ precisely corresponds to the set of cells with content $j$ in row $k$ of $\sigma$.

Now, let $x=(k,i)$, $y=(k-1,i)$, and $z=(k-1,j)$ with $i<j$ be the cells of a triple in $\sigma$. Then $x$ and $y$ correspond to the particles with label $\lambda_i$ in row $k$ and $k-1$ of $M$, respectively, and $z$ corresponds to the particle with label $\lambda_j$ in row $k-1$, and $\lambda_j<\lambda_i$. $\sigma(x)=p_{\lambda_i}(M^{(k)})$, $\sigma(y)=p_{\lambda_i}(M^{(k-1)})$, and $\sigma(z)=p_{\lambda_j}(M^{(k)})$. Then $(\sigma(x),\sigma(y),\sigma(z))\in\cQ$ if and only if $p_{\lambda_i}(M^{(k-1)})<p_{\lambda_j}(M^{(k-1)})<p_{\lambda_i}(M^{(k)})$. Summing over all triples of $\sigma$, we get $\quinv(\sigma)=\sum_{2 \leq k \leq L} R(M^{(k)}, M^{(k-1)})$, thus matching the $t$-components of the weights as well. 
\end{proof}

\begin{example}
We continue with the tableau $\sigma$ from 
\cref{ex:Msigma}, and the corresponding multiline diagram
$M=\cM(\sigma)$ from \cref{ex:multiline}.

The $\quinv$ triples of $\sigma$ are $\qtrip{2}{5}{1}$ and two copies of $\qtrip{5}{3}{4}$, which precisely match the three refusals in $M$
described in \cref{ex:refusals}.
\end{example}

We can also give a natural description of the Markov chain on multiline
diagrams which is the image of the tableau Markov chain under the map $\cM$. (Since we remain in the case of a strict partition $\lambda$, 
this is the more simple version of the tableau chain described in Section \ref{sec:markov distinct}, which does not
involve the swapping operators $\{\tau_j\}$.)

In terms of multiline diagrams, the chain can be described as follows. 
From a given multiline diagram $M$, any given particle, say of type $r$
at site $j$ on row $k$, may initiate a jump by jumping from site $j$ to site $j+1$. Then recursively, whenever a particle of a given type $r$ jumps from a site $j$ to a site $j+1$ on row $k$, if there is also a particle of type $r$ at site $j+1$ on the row above, then that particle jumps from site $j+1$ to site $j+2$.  

The rate of such a jump initiated by the particle of type $r$ at site $j$ on row $k$ can be described as follows. If $k>1$, and there is
also a particle of type $r$ at site $j$ on row $k-1$, then the jump cannot happen. Otherwise, it happens at rate 
$x_j^{-1} t^d$ where $d$ is equal to the number of stronger particles at site $j$ on row $k$, plus the number of weaker particles 
at site $j$ on row $k-1$. (If $k=1$ then there is no row $k-1$ and 
the second contribution is understood to be $0$.)

This corresponds precisely to the rate of the jump given by
$R_{(k,j)}$ in the tableau process from the state $\sigma=\cM^{-1}(M)$,
as described in Section \ref{sec:markov distinct}.

Notice in particular that the projection onto the bottom row 
of the diagram is precisely the mTAZRP process on $\TAZRP(\lambda, n)$. 

Finally, we discuss the case of a general partition $\lambda$,
in which there may be several particles with the same label
on the same row. Along similar lines to the case of the 
ASEP in \cite{martin-2020}, one can quite naturally define
a weight function directly on multiline diagrams in such a
way that $\wt(M)=\sum_{\sigma:\cM(\sigma)=M} \wt(\sigma)$. 
There are then various natural ways to define
a Markov process on the set of multiline diagrams,
such that the bottom row performs the mTAZRP process, and
such that the stationary distribution is proportional 
to the weight function. However, we found it easier to work
with the process on tableaux defined in Section \ref{sec:Markov},
whose image via the map $\cM$ onto the space
of multiline diagrams is not necessarily 
a Markov chain. 

\begin{remark}
In retrospect, the fact that a zero range process is the right candidate to associate to $\widetilde{H}_{\lambda}$ should not have been surprising: a fundamental difference between ZRPs and exclusion processes is that the former are free of the exclusion restriction, which prevents multiple particles from occupying the same site, as in the ASEP. In a general sense, this ``removal of exclusivity'' can be considered an analog of the action of plethysm when $P_{\lambda}$ is transformed into $\widetilde{H}_{\lambda}$. We shy away from exploring this connection in this article, but it would be greatly illuminating to understand how formal plethystic substitution manifests in passing from the ASEP to the TAZRP.
\end{remark}


\section{Proofs}
\label{sec:proofs}

\subsection{Proof of \cref{lem:updown}}
First we fix some notation to facilitate our proofs. 
For $j\geq 0$ let $\cont_j(\sigma):=\cont_j(\sigma;k)=\{(j,c)\in\dg(\lambda):\sigma(j,c)=k\}$ be the set of cells in row $j$ of $\sigma$ with content $k$, and let $\ell_j:=\ell_j(k)=|\cont_j(\sigma)|$ be the number of such cells. Also define $d_j(t) := d_j^{(k)}(t)=\sum_{u\in \cont_j(\sigma)} t^{\down(\sigma,u)}$ and $u_j(t) := u_j^{(k)}(t)=\sum_{u\in \cont_j(\sigma)} t^{\up(\sigma,u)}$ to be the generating functions for the $\down$ and $\up$ statistics of cells with content $k$ in row $j$. By convention, let $\cont_j(k)=\emptyset$ and $d_j(t) = u_j(t) =0$ if $j<1$ or $j>\lambda_1$. Finally let
 \[
D(\sigma,k) 
= \sum_{j=1}^{\lambda_1} d_j(t)
\quad \mbox{and} \quad 
U(\sigma,k) = \sum_{j=1}^{\lambda_1} u_j(t). 
 \]

\begin{example}
\label{eg:updown}
For example, consider the tableau in \cref{fig:polyqueue}, focusing only on the cells with content $k=3$:
\[
\sigma\big\vert_3=\raisebox{7pt}{\tableau{3&\ \\\ &3&\ &3&3\\\ &\ &\ &3&\ &3&3}}.
\]
Then $\ell_1 = \ell_2 = 3$, $\ell_3 = 1$ and
\begin{align*}
&d_1(t)=1+t+t^2,& \quad&d_2(t)=2t^3+t^4,& \quad&d_3(t)=t^3,\\
&u_1(t)=2t^3+t^4,&& u_2(t)=t^3+t^2+t,&&u_3(t)=1,
\end{align*}
so that $D(\sigma,3) = U(\sigma,3)= 1+t+t^2+3t^3+t^4$.
\end{example} 

We begin with a useful lemma.

\begin{lemma}
\label{lem:diff}
Fix $\lambda,n,k$, and let $\cont_j(\sigma)$, $d_j(t)$, and $u_j(t)$ be defined as above for some filling $\sigma$ of $\dg(\lambda)$. Then for all $j\geq 0$, we have
\begin{equation}
d_{j+1}(t)-u_j(t) = \frac{t^{\ell_{j+1}}-t^{\ell_{j}}}{t-1}.
\end{equation}
\end{lemma}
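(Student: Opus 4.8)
The plan is to fix $\lambda, n, k$ and a filling $\sigma$, and analyze the quantities $d_{j+1}$ and $u_j$ cell by cell. Let the cells of $\cont_j(k)$ be $x_1, \dots, x_{\ell_j}$ listed in reading order (right to left), and similarly $x'_1, \dots, x'_{\ell_{j+1}}$ for $\cont_{j+1}(k)$. First I would note that for a cell $u \in \cont_{j+1}(k)$, the set $\arm(u)$ consists of the cells strictly left of $u$ in row $j+1$ together with the cells strictly right of $u$ in row $j$; since we only count cells with content $k$, $\down(\sigma, u)$ (when it is not $-\infty$) equals the number of $k$'s to the left of $u$ in row $j+1$ plus the number of $k$'s to the right of $u$ in row $j$. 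Dually, for $v \in \cont_j(k)$, $\up(\sigma, v)$ counts the $k$'s to the left of $v$ in row $j+1$ plus the $k$'s to the right of $v$ in row $j$. The key observation is that $\arm$ of a cell in row $j+1$ and $\uarm$ of a cell in row $j$ refer to the \emph{same} pair of rows, so these two statistics are computed from the same data — the interleaving pattern of the $k$-cells in rows $j$ and $j+1$.

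**Handling the $-\infty$ / "$\South = $ self" exclusions.**
The subtlety is the convention that $\down(\sigma,u) = -\infty$ when $\sigma(\South(u)) = \sigma(u) = k$, and $\up(\sigma,v) = -\infty$ when $\sigma(\North(v)) = \sigma(v) = k$; such cells contribute $t^{-\infty} = 0$. A cell $u = (j+1, c)$ is excluded from $d_{j+1}$ exactly when $(j,c)$ also has content $k$ — and in that case $(j,c)$ is excluded from $u_j$. So the excluded cells in the two rows are in bijection (a vertical domino of $k$'s), and they contribute $0$ to both sides. I would therefore restrict attention to the \emph{surviving} cells in each row: those $k$-cells in row $j+1$ with no $k$ directly below, and those $k$-cells in row $j$ with no $k$ directly above. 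After deleting the matched vertical dominoes, the surviving $k$-cells in the two rows occupy disjoint column-sets, and within the union of the two rows they sit in $\ell_{j+1} - m$ and $\ell_j - m$ columns respectively, where $m$ is the number of dominoes — but note $t^{\ell_{j+1}} - t^{\ell_j}$ is unchanged if we simultaneously decrease both exponents, so we may as well assume $m = 0$, i.e. no vertical domino of $k$'s occurs between rows $j$ and $j+1$.

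**The combinatorial heart.**
Now, with the $k$-cells of rows $j$ and $j+1$ occupying $\ell_j + \ell_{j+1}$ distinct columns, I would sort all these columns from right to left, so each $k$-cell gets a rank $0, 1, 2, \dots, \ell_j + \ell_{j+1} - 1$ among the combined set. For a cell $u$ in row $j+1$ of combined rank $p$, $\down(\sigma,u)$ equals the number of combined-$k$-cells with smaller rank that lie in row $j+1$ (those are "to the left in row $j+1$") plus the number with larger rank in row $j$ ("to the right in row $j$") — wait, I need to recheck the geometry: $\arm$ takes cells left in the same row and right in the row below, so for $u$ in row $j+1$, $\down$ counts $k$'s to its left in row $j+1$ and $k$'s to its right in row $j$. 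Translating to ranks (rank $0$ = rightmost): $\down(\sigma, u) = \#\{k\text{-cells in row } j+1 \text{ with rank} > \operatorname{rank}(u)\} + \#\{k\text{-cells in row } j \text{ with rank} < \operatorname{rank}(u)\}$. This should be made precise, but the upshot is that $\down(\sigma, u) = \operatorname{rank}(u) - (\text{number of row-}j\text{ cells of rank} < \operatorname{rank}(u))+(\text{number of row-}(j+1)\text{ cells of rank}>\operatorname{rank}(u))$; after simplification each $\down$ value is determined purely by the rank and the row. I would then observe the clean statement: as $u$ ranges over the $\ell_{j+1}$ cells of row $j+1$, the multiset $\{\down(\sigma,u)\}$ equals $\{0, 1, \dots, \ell_{j+1}-1\}$ shifted by the positions of the row-$j$ cells, and as $v$ ranges over row $j$, $\{\up(\sigma,v)\}$ is $\{0,\dots,\ell_j-1\}$ shifted in a complementary way. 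Concretely, I expect to show
\[
d_{j+1} = \sum_{i=0}^{\ell_{j+1}-1} t^{a_i}, \qquad u_j = \sum_{i=0}^{\ell_j - 1} t^{b_i},
\]
where $\{a_i\} \sqcup \{b_i\} = \{0, 1, \dots, \ell_j + \ell_{j+1} - 1\}$ as multisets — i.e. together the $\down$-values of row $j+1$ and the $\up$-values of row $j$ realize each of $0, 1, \dots, \ell_j+\ell_{j+1}-1$ exactly once. Granting that, $d_{j+1} + u_j = \sum_{p=0}^{\ell_j + \ell_{j+1}-1} t^p = \frac{t^{\ell_j+\ell_{j+1}}-1}{t-1}$, whence
\[
d_{j+1} - u_j = \frac{t^{\ell_j + \ell_{j+1}} - 1}{t-1} - 2 u_j,
\]
which is not quite the target; so instead I would directly track the bijection showing $d_{j+1} = \sum_{p} t^p$ over those ranks $p$ whose cell lies in row $j+1$, and $u_j = \sum_p t^p$ over ranks whose cell lies in row $j$, and then $d_{j+1} - u_j = \sum_{p : \text{row } j+1} t^p - \sum_{p : \text{row } j} t^p$. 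A direct induction on inserting the $k$-cells one at a time from the right, tracking how $d_{j+1}$ and $u_j$ change, should collapse this to $\frac{t^{\ell_{j+1}} - t^{\ell_j}}{t - 1}$: inserting a new rightmost $k$-cell in row $j+1$ multiplies the relevant generating piece appropriately, and the telescoping gives the geometric-series difference. The main obstacle is getting the bookkeeping of which cells survive the $-\infty$ convention exactly right and keeping the "$\arm$ = left-same-row + right-lower-row" orientation straight throughout; once the surviving-cell reduction is in place and the rank description is established, the identity falls out by an induction on $\ell_j + \ell_{j+1}$ (adding $k$-cells from right to left), and summing $d_{j+1}(t) - u_j(t) = \frac{t^{\ell_{j+1}} - t^{\ell_j}}{t-1}$ over $j$ will immediately yield $\bar D(\sigma,k) = \bar U(\sigma,k)$ and hence \cref{lem:updown} by the telescoping $\sum_{j\ge 0}(t^{\ell_{j+1}} - t^{\ell_j}) = 0$ (all but finitely many $\ell_j$ vanish, and $\ell_0 = 0$).
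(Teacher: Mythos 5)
Your central combinatorial claim is false, and it is contradicted by the paper's own running example. You assert that after reducing to the case of disjoint column sets, the $\down$-values of the $k$-cells in row $j+1$ and the $\up$-values of the $k$-cells in row $j$ together realize each of $0,1,\dots,\ell_j+\ell_{j+1}-1$ exactly once (equivalently, that each such value equals the cell's rank in the combined right-to-left order). But both statistics are computed by the \emph{same} function of the column, namely $f(c)=\#\{k\text{-cells left of }c\text{ in row }j+1\}+\#\{k\text{-cells right of }c\text{ in row }j\}$, applied to cells of row $j+1$ (giving $\down$) and of row $j$ (giving $\up$), and $f$ is far from injective. In the tableau of \cref{fig:polyqueue} with $k=3$ and $j=1$ one has $d_2=2t^3+t^4$ and $u_1=2t^3+t^4$ (see \cref{eg:updown}), so the combined multiset of exponents is $\{3,3,3,3,4,4\}$, not $\{0,1,\dots,5\}$; the same example kills the cell-by-cell version $\down(u)=\operatorname{rank}(u)$. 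You notice the first formulation "is not quite the target'' and pivot, but the replacement is equally false, and the final fallback ("a direct induction on inserting the $k$-cells one at a time from the right\dots should collapse this'') is never carried out, so no proof remains. Two smaller problems: (i) your justification for discarding vertical dominoes --- that $t^{\ell_{j+1}}-t^{\ell_j}$ is unchanged when both exponents drop by $m$ --- is wrong (the expression is multiplied by $t^{-m}$); the reduction is salvageable only because each domino also contributes exactly one arm-cell to every surviving cell in either row, so the left-hand side scales by $t^{-m}$ as well, and this must be argued. (ii) In the paper's usage the sums $d_j,u_j$ do \emph{not} apply the $t^{-\infty}=0$ convention (compare $d_2=2t^3+t^4$ above), although the difference $d_{j+1}-u_j$ is insensitive to this since a domino cell $(j+1,c)$ and its mate $(j,c)$ contribute equal amounts to $d_{j+1}$ and $u_j$ respectively.

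For comparison, the paper proves the identity by induction on the number of \emph{columns} of $\lambda$: one prepends a column on the left, sets $v_j=\delta_{\sigma(j,1),k}$, records how $d_j$ and $u_j$ transform ($d_j = v_j t^{\ell_{j-1}}+t^{v_j}d_j'$ and $u_j = v_j t^{\ell_j}+t^{v_{j+1}}u_j'$), and closes the induction with the elementary identity $t^{v_{j+1}}+(v_j-v_{j+1})(t-1)=t^{v_j}$ for $v_j,v_{j+1}\in\{0,1\}$. If you want to pursue an insertion argument, that column-by-column scheme is the one that works; a row-internal rank statistic does not capture $\down$ and $\up$ correctly.
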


\begin{proof}
We use induction on the number of columns $s=\ell(\lambda)$ in $\dg(\lambda)$. 
When $s=1$, $\down(\sigma,u)=\up(\sigma,u)=0$ for every cell $u\in\dg(\lambda)$ regardless of the filling $\sigma$, and the cardinality of $\cont_j(\sigma)$ is either 0 or 1. The result then follows by a simple case analysis. 

Now suppose the statement holds for all fillings with fewer than $s$ columns. Fix $j\geq 0$ to be a row number. Suppose $\lambda=(\lambda_1,\ldots,\lambda_s)$, and let $\lambda'=(\lambda_2,\ldots,\lambda_s)$ be $\lambda$ with its leftmost column removed. Let $\sigma$ be a filling of $\dg(\lambda)$ and let $\sigma'$ be the sub-filling of $\sigma$ restricted to $\dg(\lambda')$. Define $d_j(t)$ and $u_j(t)$ to be the $\down$ and $\up$ generating polynomials corresponding to $\sigma$, and $d'_j(t)$ and $u_j'(t)$ those corresponding to $\sigma'$. Let $\ell_j$ and $\ell'_j$ be the cardinalities of $\cont_j(\sigma)$ and $\cont_j(\sigma')$, respectively. Since $\lambda'$ has fewer than $s$ columns, $d'_{j+1}(t)-u'_j(t) = \frac{t^{\ell'_{j+1}}-t^{\ell'_{j}}}{t-1}$.

Define $v_j=\delta_{\sigma(j,1),k}$ be the indicator function that equals 1 if the entry in the leftmost column of row $j$ of $\sigma$ has content $k$. Then $
d_{j}(t)=v_{j} t^{\ell'_{j-1}} + t^{v_j} d'_{j}(t)$,
since the lower arm of the cell $(j,1)$ consists of all cells in $\cont_{j-1}(\sigma')$, and every cell in $\cont_j(\sigma')$ has the cell $(j,1)$ in its lower arm if $v_j=1$. 
Similarly, $u_{j}(t) = v_j t^{\ell'_j} + t^{v_{j+1}}u'_j(t)$,
since the upper arm of the cell $(j,1)$ consists of all cells in $\cont_j(\sigma')$, and every cell in $\cont_j(\sigma')$ has the cell $(j+1,1)$ in its upper arm if $v_{j+1}=1$. 
Thus we have 
\begin{align*}
d_{j+1}(t)-u_{j}(t)&=v_{j+1} t^{\ell'_{j}} + t^{v_{j+1}} d'_{j+1}(t)-(v_j t^{\ell'_j} + t^{v_{j+1}}u'_j(t))\\
&=t^{v_{j+1}}(d'_{j+1}(t)-u'_j(t))+ t^{\ell'_{j}}(v_{j+1}-v_j)\\
&=t^{v_{j+1}}\frac{t^{\ell'_{j+1}}-t^{\ell'_{j}}}{t-1}+ t^{\ell'_{j}}(v_{j+1}-v_j)
\end{align*}
where the last equality follows by the induction hypothesis.

Set $\ell_j'$ and $\ell_j$ to be the number of cells with content $k$ in row $j$ of $\sigma'$ and $\sigma$, respectively. As such, $\ell_j=\ell'_j+v_j$. 
We can now rewrite the right hand side of the last line as
\[
\frac{t^{\ell_{j+1}}-t^{\ell'_j}(t^{v_{j+1}}+(v_{j}-v_{j+1})(t-1)) }{t-1}.
\]
Now,
\[
(t^{v_{j+1}}+(v_{j}-v_{j+1})(t-1)) \quad =\quad  \begin{cases} t, & v_j=v_{j+1}=1,\\
1+(t-1), & v_j=1, v_{j+1}=0,\\
t-(t-1), & v_j=0, v_{j+1}=1,\\
1, & v_j=v_{j+1}=0.
\end{cases}
\quad = \quad  t^{v_j}.
\]
Thus the expression simplifies to
\[
d_{j+1}(t)-u_j(t) = \frac{t^{\ell_{j+1}}-t^{\ell_{j}}}{t-1},
\]
which is precisely what we wanted to prove.
\end{proof}

For the tableau in \cref{eg:updown}, observe that 
\[
d_1(t)-u_0(t)=1+t+t^2=\frac{t^3-1}{t-1}=\frac{t^{\ell_1}-t^{\ell_0}}{t-1},\quad d_2(t)-u_1(t)=0=\frac{t^{\ell_2}-t^{\ell_1}}{t-1},
\]
\[
d_3(t)-u_2(t)=-t^2-t=\frac{t-t^3}{t-1}=\frac{t^{\ell_3}-t^{\ell_2}}{t-1},\quad 
d_4(t)-u_3(t)=-1=\frac{1-t}{t-1}=\frac{t^{\ell_4}-t^{\ell_3}}{t-1},
\]
in accordance with \cref{lem:diff}.
With \cref{lem:diff} in hand, we are prepared to prove \cref{lem:updown}.

\begin{proof}[Proof of \cref{lem:updown}]
We will first prove that $U(\sigma,k) = D(\sigma,k)$.
We begin by rearranging the entries in the sum:
\begin{align*}
\sum_{j=1}^{\lambda_1} (d_j(t)-u_j(t)) & =  d_1(t)+\sum_{j=1}^{\lambda_1-1} \big(d_{j+1}(t)-u_j(t)\big) -u_{\lambda_1}(t)\\
&=d_1(t)+\sum_{j=1}^{\lambda_1-1}\frac{t^{\ell_{j+1}}-t^{\ell_{j}}}{t-1} - u_{\lambda_1}(t)\\
&=d_1(t)+\frac{t^{\ell_{\lambda_1}} - t^{\ell_1}}{t-1} - u_{\lambda_1}(t).
\end{align*}
The second line above comes from \cref{lem:diff}, and the third line comes from simplifying the telescoping sum. Now we observe that 
\[
d_1(t) = 1+t+\ldots t^{\ell_1-1} = \frac{t^{\ell_1}-1}{t-1}
\]
since the only contribution to the leg of a cell in the bottom-most row is from the cells to its left in the same row.
Similarly,
\[
u_{\lambda_1}(t) = 1+t+\ldots t^{\ell_{\lambda_1}-1} = \frac{t^{\ell_{\lambda_1}}-1}{t-1}.
\]
since the only contribution to the upper leg of a cell in the top-most row is from the cells to its right in the same row. 
It follows that all terms in the equation above cancel, and we get
\[
\sum_{j=1}^{\lambda_1} (d_j(t)-u_j(t)) =0,
\]
which proves that $U(\sigma,k) = D(\sigma,k)$.

To now prove \eqref{eq:D=U}, it suffices to observe that if a cell $\sigma(u)=\sigma(\South(u))=k$, then $\down(\sigma,u)=\up(\sigma,\South(u))$. Thus if $\sigma(u)=\sigma(\South(u))=k$, the contribution of $u$ to $D(\sigma,k)$ cancels with the contribution of $\South(u)$ to $U(\sigma,u)$. Equivalently, if $\sigma(u)=\sigma(\North(u))=k$, then the contribution of $u$ to $U(\sigma,k)$ cancels with the contribution of $\North(u)$ to $D(\sigma,k)$. 
This proves the result.
\end{proof}

\subsection{Proof of \cref{lem:quinvdiff}}

\begin{proof}
Let $u=(i,j)$, $k:= \xi(u)$, and $y= H_u(\xi)=(i+m,j)$, with $\xi(y)=k+m$. By \cref{lem:g}, $y$ and $m$ are well-defined. Note that by definition, $m$ is maximal such that $\xi(i+\ell,j)=k+\ell \pmod n$ for all $0 \leq \ell \leq m$. In \cref{fig:xy}, we show the cells whose content differs between $\xi$ and $\sigma$, which is the contiguous increasing chain from $u$ to $y$. In particular, $\xi(i+\ell,j)=k+\ell$ and $\sigma(i+\ell,j)=k+\ell+1$ for all $0 \leq \ell \leq m$, and for all other cells $(i,j)\in\dg(\lambda)$, $\xi(i,j)=\sigma(i,j)$.

\begin{figure}[h!]

\begin{align*}
& \quad\ \xi & & \quad\ \sigma & \\
\begin{ytableau}
\none[{\scriptstyle y=(i+m,j): }] \\
\none[\vdots] \\
\none[\vdots] \\
\none[{\scriptstyle u=(i,j):}]
\end{ytableau}
\qquad &
\ytableaushort{{\scriptstyle k+m}, \vdots, \vdots, {\scriptstyle k}} 
&
 \raisebox{-1.2cm}{$\longrightarrow$} \qquad\quad
\qquad &
\ytableaushort{{\scriptstyle k+m+1}, \vdots, \vdots, {\scriptstyle k+1}} &
\end{align*}

\caption{The cells $\{u=(i,j),(i+1,j),\ldots,y=(i+m,j)\}$ are shown in $\dg(\lambda)$ with their respective contents in $\xi$ and $\sigma$. These cells are precisely those whose content is incremented by 1 via the ringing path transition triggered by $u$ in $\xi$ to obtain $\sigma=R_u(\xi)$.}\label{fig:xy}
\end{figure}

We will compute $\quinv(\xi)-\quinv(\sigma)$ by examining the contribution to quinv from triples coming from the pairs of columns $j'$ and $j$ for each $j'<j$, and from the pairs of columns $j$ and $j''$ for each $j''>j$. Note that only the triples that involve the cells within the chain from $u$ to $y$ might have a different contribution to quinv in $\xi$ vs $\sigma$. For readability, we will introduce the following notation. For a filling $\nu$ of $\dg(\lambda)$ and any triple $L$, define $\quinv(\nu;L)$ to be 1 if this triple is a quinv triple in the filling $\nu$, and 0 if either it is not a quinv triple, or if the triple $L$ doesn't exist. Then we may write $\quinv(\nu)=\sum_L \quinv(\nu;L)$, where the sum is over all triples $L$ in $\dg(\lambda)$. 

For $0 \leq \ell \leq m$ and for $j'<j$, define the triple $L_{\ell}^{j'}=\{(i+\ell+1,j'),(i+\ell,j'),(i+\ell,j)\}$, as shown below with $a=\sigma(i+\ell+1,j')$ and $b=\sigma(i+\ell,j')$ (recall that $\xi(i+\ell,j)=k+\ell$ and $\sigma(i+\ell,j)=k+\ell+1$): 

\begin{align*}
& \quad\ \xi & & \quad\ \sigma & \\
\begin{array}{c}
\scriptstyle{\text{row $i+\ell+1$:}} \\[0.8cm]
\scriptstyle{\text{row $i+\ell$:}}
\end{array}
\quad &
\begin{ytableau}
{\scriptstyle a} \\
{\scriptstyle b} & \none[\cdots] & {\scriptstyle k+\ell} \\
\none[j'] & \none & \none[j]
\end{ytableau} &
\qquad
\longrightarrow\qquad
\qquad &
\begin{ytableau}
{\scriptstyle a} \\
{\scriptstyle b} & \none[\cdots] & {\scriptstyle k+\ell+1} \\
\none[j'] & \none & \none[j]
\end{ytableau} &
\end{align*}

Observe that the only way that $(a,b,k+\ell)\in\cQ$ but $(a,b,k+\ell+1)\not\in\cQ$ is if $a=k+\ell+1$ and $b\neq k+\ell$. On the other hand, the only way that $(a,b,k+\ell)\not\in\cQ$ but $(a,b,k+\ell+1)\in\cQ$ is if $a\neq k+\ell+1$ and $b= k+\ell$. Thus we get the following relations for the triples $L_{\ell}^{j'}$. For each $0\leq \ell < m$, we have 
\begin{align*}
\quinv(\xi; L_{\ell}^{j'})-\quinv(\sigma; L_{\ell}^{j'})= 
\delta_{\xi(i+\ell+1,j'), k+\ell+1} - \delta_{\xi(i+\ell,j'),k+\ell}.
\end{align*}
When $\ell=m$, if $b=k+m$, the equality above holds as well, independent of whether or not the cell $(i+m+1,j')$ exists, with one exception: the case when $b=k+m=n$ and $(i+m+1,j')\not\in\dg(\lambda)$. On the other hand, if $b\neq k+m$, then the only time $\quinv(\xi;L^{j'}_m)\neq \quinv(\sigma;L^{j'}_m)$ is when $(i+m+1,j')\not\in\dg(\lambda)$ and $k+m=n$, in which case $(0,b,n)\in\cQ$ but $(0,b,1)\not\in\cQ$. One can check that this simplifies to
\begin{multline}\label{eq:Lm}
\quinv(\xi; L_m^{j'})-\quinv(\sigma; L_m^{j'}) 
=
\delta_{\xi(i+m+1,j'),k+m+1} - \delta_{\xi(i+m,j'), k+m}
+ \delta_{\lambda_{j'}, i+m} \delta_{k+m,n}.
\end{multline}
The expressions for $L^{j'}_{\ell}$ combine into the telescoping sum:
\begin{multline*}
\sum_{\ell=0}^{m} \big(\quinv(\xi; L_{\ell}^{j'})-\quinv(\sigma; L_{\ell}^{j'}) \big)= 
- \delta_{\xi(i,j'),k} + \delta_{\xi(i+m+1,j'), k+m+1} + 
\delta_{\lambda_{j'}, i+m} \delta_{k+m,n}.
\end{multline*}

For $1 \leq \ell \leq m$ and for $j''>j$, define the triple $R_{\ell}^{j''}=\{(i+\ell,j),(i+\ell-1,j),(i+\ell-1,j'')\}$, as shown below with $c=\sigma(i+\ell,j'')$: 

\begin{align*}
& \quad\ \xi & & \quad\ \sigma & \\
\begin{array}{c}
\scriptstyle{\text{row $i+\ell$:}} \\[0.8cm]
\scriptstyle{\text{row $i+\ell-1$:}}
\end{array}
\quad &
\begin{ytableau}
{\scriptstyle k+\ell} \\
{\scriptstyle k+\ell-1} & \none[\cdots] & {\scriptstyle c} \\
\none[j] & \none & \none[j'']
\end{ytableau} &
\longrightarrow\qquad
\quad &
\begin{ytableau}
{\scriptstyle k+\ell+1} \\
{\scriptstyle k+\ell} & \none[\cdots] & {\scriptstyle c} \\
\none[j] & \none & \none[j'']
\end{ytableau} &
\end{align*}
Observe that neither $(k+\ell,k+\ell-1,c)$ or $(k+\ell+1,k+\ell,c)$ is in $\cQ$ for $1 \leq u \leq m$, and so $\quinv(\xi;R_{\ell}^{j''})=\quinv(\sigma;R_{\ell}^{j''})$ for $1 \leq \ell \leq m$. 
The final triples to account for are the following: for each $j''>j$, define the triples 
\[ R_0^{j''}=\{x=(i,j),(i-1,j),(i-1,j'')\}\quad \text{and}\quad R_{m+1}^{j''}=\{(i+m+1,j),y=(i+m,j),(i+m,j'')\}
\]
(if those cells exist). Using that $\xi(i-1,j)\neq k$, we get that $\quinv(\sigma;R_{0}^{j''})=1$ while $\quinv(\xi;R_{0}^{j''})=0$ if and only if $\xi(i-1,j'')=k$, and otherwise they are equal (including the trivial case where this triple doesn't exist). Thus 
\[
\quinv(\xi;R_{0}^{j''})-\quinv(\sigma;R_{0}^{j''}) = -\delta_{\xi(i-1,j''),k}. 
\]
Finally, we notice that since $\xi(i+m+1,j)\neq k+m+1$, if the cell $(i+m+1,j)\in\dg(\lambda)$ exists, we have 
\begin{equation}\label{eq:R}
\quinv(\xi;R_{m+1}^{j''})-\quinv(\sigma;R_{m+1}^{j''})= 
\delta_{\xi(i+m,j''),k+m+1}.
\end{equation}
Otherwise $R_{m+1}^{j''}$ is a degenerate triple. In that case, if $\xi(y)=k+m\neq n$, \eqref{eq:R} still holds. However, if $\xi(y)=n$, then $(0, n, c) \not\in \cQ$ for all $c$, but $(0,1,c)\in\cQ$ unless $c=\xi(i+m,j'')=1$. One can check this simplifies to the following expression for all triples $R^{j''}_{m+1}$:
\begin{equation}\label{eq:Rm}
\quinv(\xi;R_{m+1}^{j''})-\quinv(\sigma;R_{m+1}^{j''})= 
\delta_{\xi(i+m,j''),k+m+1} - \delta_{\lambda_j,i+m} \delta_{k+m,n}.
\end{equation}

Now, since all triples outside of the sets $\{L_{\ell}^{j'}\}_{\substack{j'<j,\\1 \leq \ell \leq m}}$ and $\{R_{\ell}^{j''}\}_{\substack{j''>j,\\0 \leq \ell \leq m+1}}$ are identical in $\xi$ and $\sigma$, we have that $\quinv(\xi)-\quinv(\sigma)$ equals
\begin{align*}
&\hspace{-0.5in}\sum_{j'<j}\sum_{1 \leq \ell \leq m} \left(\quinv(\xi;L_{\ell}^{j'})-\quinv(\sigma;L_{\ell}^{j'})\right) + \sum_{j''>j}\sum_{0 \leq \ell \leq m+1} \left(\quinv(\xi; R_{\ell}^{j''})-\quinv(\sigma; R_{\ell}^{j''})\right) = \nonumber\\
\qquad&\Big(\#\{j'<j: \xi(i+m+1,j')=k+m+1\} -\#\{j'<j: \xi(i,j')=k \}\nonumber \\
&\qquad+ \#\left\{j'<j: \lambda_{j'}=\lambda_j \right\} 
\delta_{\lambda_j,i+m} \delta_{k+m,n}\Big)\\
 &\qquad+ \Big(-\#\left\{j''>j:\sigma(i-1,j'')=k\right\}+\#\left\{j''>j: \xi(i+m,j'')=k+m+1 \right\}\nonumber\\
 &\qquad-\#\{j''>j: \lambda_{j''}=\lambda_j\} 
\delta_{\lambda_j,i+m} \delta_{k+m,n}\Big)\\
 &= \up(\sigma,y)-\down(\xi,x) + 
\delta_{\lambda_j,i+m} \delta_{k+m,n}
\Big(\#\left\{j'<j:\lambda_{j'}=\lambda_j \right\}-\#\left\{j''>j:\lambda_{j''}=\lambda_j \right\}\Big). 
\end{align*}
The last line is \eqref{eq:quinvdiff} since $H_u(\xi)=(i+m,j)$ and $\xi(y)=k+m$ by definition. We now observe that if all parts of $\lambda$ are distinct, the final term in this line vanishes. Thus when $\lambda$ is strict, we obtain \eqref{eq:quinvdistinct}, completing the proof.
\end{proof}

\bibliographystyle{acm}
\bibliography{../../../Macbib}

\end{document}